\definecolor{link1}{rgb}{0,0,.7}
\definecolor{link2}{rgb}{0,0.25,0.5}
\newcommand{\defeq}{\stackrel{\scriptscriptstyle\textup{def}}{=}}
\colorlet{darkgreen}{green!40!black}
\providecommand{\email}[1]{\href{mailto:#1}{#1}}
\providecommand{\cites}[1]{\cite{#1}}
\renewcommand{\le}{\leqslant}
\renewcommand{\leq}{\leqslant}
\renewcommand{\ge}{\geqslant}
\renewcommand{\geq}{\geqslant}
\newcommand{\grad}{\nabla}
\renewcommand{\epsilon}{\varepsilon}
\numberwithin{equation}{section}
\theoremstyle{plain}
\newtheorem*{theorem*}{Theorem}
\newtheorem*{lemma*}{Lemma}
\newtheorem{theorem}{Theorem}
\newtheorem{lemma}{Lemma}[section]
\newtheorem{question}[lemma]{Question}
\newtheorem{conj}[lemma]{Conjecture}
\newtheorem{proposition}[lemma]{Proposition}
\theoremstyle{definition}
\theoremstyle{remark}
\newtheorem{remark}[lemma]{Remark}
\newtheorem*{remark*}{Remark}
\DeclarePairedDelimiter{\abs}{\lvert}{\rvert}
\DeclarePairedDelimiter{\norm}{|}{|}
\DeclarePairedDelimiter{\paren}{(}{)}
\DeclarePairedDelimiter{\set}{\{}{\}}
\newcommand{\ve}{\varepsilon}
\newcommand{\rmd}{\, \mathrm{d}}
\newcommand{\be}{\begin{equation}}
\newcommand{\ee}{\end{equation}}
\newcommand{\ba}{\begin{array}{l}}
\newcommand{\ea}{\end{array}}
\newcommand{\bbT}{{\mathbb{T}}}
\newcommand{\eps}{{\varepsilon}}
\newcommand{\rd}{\partial}
\newcommand{\nb}{\nabla}
\newcommand{\T}{\mathbb{T}}
\newcommand{\R}{\mathbb{R}}
\def\ol{\overline}
\begin{document}

\title{%
  Anomalous Dissipation in Passive Scalar Transport
}
\unmarkedfntext{%
  This work has been partially supported by the National Science Foundation under grants
  DMS-1703997 to TD,
  DMS-1817134 to TE,
  DMS-1814147 to GI,
  as well as
  by the Center for Nonlinear Analysis.
  IJ was partially supported by the Science Fellowship of POSCO TJ Park Foundation and the National Research Foundation of Korea (NRF) grant No. 2019R1F1A1058486.
  TD and TE would like to thank the Korean Institute for Advanced Study (KIAS) for its hospitality.  TD would like to thank Navid Constantinou for useful discussions.
}
\author{%
  Theodore D. Drivas%
  \footnote{%
    Department of Mathematics, Princeton University.
    E-mail: \email{tdrivas@math.princeton.edu}
  }
  \and
  Tarek M. Elgindi%
  \footnote{%
    Department of Mathematics, UC San Diego.
    E-mail: \email{telgindi@ucsd.edu}
  }
  \and
  Gautam Iyer%
  \footnote{%
    Department of Mathematical Sciences, Carnegie Mellon University.
    E-mail: \email{gautam@math.cmu.edu}
  }
  \and
  In-Jee Jeong%
  \footnote{%
    School of Mathematics, Korea Institute for Advanced Study.
    E-mail: \email{ijeong@kias.re.kr}}
}
\date{\today}
\maketitle

\begin{abstract}

We study anomalous dissipation in hydrodynamic turbulence in the context of passive scalars.
Our main result produces an incompressible $C^\infty([0,T)\times \mathbb{T}^d)\cap L^1([0,T]; C^{1-}(\mathbb{T}^d))$ velocity field which explicitly exhibits anomalous dissipation.
As a consequence, this example also shows non-uniqueness of solutions to the transport equation with an incompressible $L^1([0,T]; C^{1-}(\mathbb{T}^d))$ drift, which is smooth except at one point in time.
We also provide three sufficient conditions for anomalous dissipation provided solutions to the inviscid equation become singular in a controlled way.
Finally, we discuss connections to the Obukhov-Corrsin monofractal theory of scalar turbulence along with other potential applications.
\end{abstract}


\section{Introduction}

We study the advection-diffusion equation
\begin{equation}\label{visceqn}
\partial_t \theta^\kappa + u \cdot \nabla \theta^\kappa =\kappa \Delta \theta^\kappa \,,
\end{equation}
on the $d$-dimensional torus, $\mathbb{T}^d$.
Here $\theta^\kappa$ is a passive scalar, representing temperature or concentration, $\kappa > 0$ is the molecular diffusivity, and $u$ is a prescribed, time dependent divergence free vector field representing the velocity of an ambient fluid.

Since $u$ is divergence free, one immediately sees that the $L^2$ energy decay of solutions is governed by
\begin{equation}\label{e:ee}
  \frac{1}{2}  \abs{\theta^\kappa(t)}_{L^2}^2  =   \frac{1}{2}  \abs{\theta(0)}_{L^2}^2 - \kappa \int_0^t \abs{\grad \theta^\kappa(s)}_{L^2}^2\rmd s\,,
\end{equation}
and thus the $L^2$ energy dissipation can be measured using $\kappa \int_0^t \abs{\grad \theta^\kappa}_{L^2}^2\rmd s$.
Even though the advecting velocity field doesn't feature in~\eqref{e:ee}, it influences the energy decay indirectly.
Indeed, advection typically generates small scales, which are rapidly damped by the diffusion.
What is expected in certain turbulent regimes~\cite{ShraimanSiggia00,DonzisSreenivasanEA05,Sreenivasan19} is that these effects strike a balance and the energy dissipation rate $\kappa \int_0^t \abs{\grad \theta^\kappa}_{L^2}^2\rmd s$ becomes \emph{independent} of $\kappa$.
That is, we expect
\begin{equation}\label{an}
\kappa \int_0^t \abs{\grad \theta^\kappa(s)}_{L^2}^2\rmd s \geq \chi>0 \,,
\end{equation}
for some constant $\chi>0$ independent of $\kappa$.
This is behavior known as anomalous dissipation.
The main result in this paper provides an explicit, deterministic example of this.

\subsection{Main Results}
We first produce a divergence free velocity field which exhibits anomalous dissipation for all initial data that is sufficiently close to a non-constant eigenfunction of the Laplacian.

\begin{theorem}[Universal rate near Harmonics]\label{mainthm}
  Fix $T>0$, $d \geq 2$, and $\alpha \in [0,1)$, and let
  \begin{equation*}
    \Psi \defeq \set{ \sin( Mx ) \sin( Ly ), \sin( Mx) \cos( Ly ), \sin( Ly) \cos( Mx ), \cos( Mx ) \cos(Ly) }_{M\geq 0, L \geq 0} - \set{0, 1}\,.
  \end{equation*}
  There exists absolute constants $\epsilon_\alpha, \chi_\alpha >0$, and a divergence-free velocity field
  \begin{equation}\label{e:ureg}
    u\in C^\infty([0,T)\times \mathbb{T}^d) \cap L^1([0,T]; C^\alpha(\mathbb{T}^d)) \cap L^\infty([0,T]\times \mathbb{T}^d) \,,
  \end{equation}
  such that the following holds:
  If $\theta_0 \in H^2(\T^d)$ is mean zero, and there exists $\lambda > 0$ and $\psi \in \Psi$ such that
  \begin{equation*}
    |\theta_0 - \lambda \psi|_{L^2} \le \varepsilon_\alpha |\theta_0|_{L^2}\,,
  \end{equation*}
  then
  \begin{equation}\label{anomaly}
    \kappa \int_0^T |\nabla \theta^\kappa|_{L^2}^2 \rmd t
    \geq \chi_\alpha \abs{\theta_0}^2_{L^2} \,.
  \end{equation}
  \end{theorem}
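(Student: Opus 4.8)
The plan is to exploit the energy identity \eqref{e:ee}: since $u$ is divergence free, $\kappa\int_0^T|\nabla\theta^\kappa|^2\,dt=\tfrac12\big(|\theta_0|_{L^2}^2-|\theta^\kappa(T)|_{L^2}^2\big)$, so \eqref{anomaly} is equivalent to a uniform-in-$\kappa$ loss of energy, i.e.\ to producing $u$ and an absolute $\eta>0$ with $|\theta^\kappa(T)|_{L^2}^2\le(1-\eta)|\theta_0|_{L^2}^2$ for all small $\kappa$. Two reductions simplify the target. First, because the functions in $\Psi$ depend only on $(x,y)$, I would take $u=u(t,x,y)$ tangent to the $(x,y)$-plane and constant in the remaining variables; the problem then factors through $\mathbb T^2$, and the higher-dimensional statement follows by integrating out the inert variables. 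Second, let $\mathcal S^\kappa_T:\theta_0\mapsto\theta^\kappa(T)$ denote the linear solution operator, which is an $L^2$ contraction because $u$ is divergence free and $-\kappa\Delta$ is dissipative. It therefore suffices to prove the energy-loss bound for the \emph{exact} eigenfunction $\theta_0=\psi$ (and, by homogeneity, $\lambda=1$): writing a near-eigenfunction datum as $\theta_0=\lambda\psi+r$ with $|r|_{L^2}\le\epsilon_\alpha|\theta_0|_{L^2}$, and combining the eigenfunction bound $|\mathcal S^\kappa_T\psi|_{L^2}\le\sqrt{1-2\eta}\,|\psi|_{L^2}$ with the contraction $|\mathcal S^\kappa_T r|_{L^2}\le|r|_{L^2}$ and the triangle inequality, gives $|\mathcal S^\kappa_T\theta_0|_{L^2}\le\big[\sqrt{1-2\eta}\,(1+\epsilon_\alpha)+\epsilon_\alpha\big]|\theta_0|_{L^2}\le\sqrt{1-\eta}\,|\theta_0|_{L^2}$ once $\epsilon_\alpha$ is fixed small in terms of $\eta$. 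This yields \eqref{anomaly} with $\chi_\alpha=\eta/2$.

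Next I would construct $u$ as a self-similar cascade of alternating shear flows. On a sequence of intervals $I_n=[t_n,t_{n+1}]$ with $t_n\uparrow T$ and $|I_n|\to0$, let $u$ be a shear of spatial frequency $k_n$ and amplitude $a_n$, alternating its direction between the $x$- and $y$-axes from one interval to the next, with $k_n\sim\mu^n$ growing geometrically. The amplitudes are tuned by the mixing condition $a_n k_n|I_n|\sim1$, so that each interval multiplies the scalar's transverse frequency by the fixed factor $\mu$; choosing $a_n\sim1$ then forces $|I_n|\sim k_n^{-1}$, so $\sum_n|I_n|<\infty$ and the cascade reaches infinite frequency precisely at $t=T$. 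The regularity claims are then a budget computation: $u\in L^\infty$ since $a_n\sim1$; $u\in C^\infty([0,T)\times\mathbb T^d)$ since only finitely many smooth shears act on any $[0,T-\delta]$; and $\int_0^T|u|_{C^\alpha}\,dt\sim\sum_n|I_n|\,a_nk_n^\alpha\sim\sum_n k_n^{\alpha-1}<\infty$ exactly because $\alpha<1$, which pins down $C^\alpha$ as the borderline regularity compatible with the mixing condition.

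The conceptual engine is that the inviscid flow ($\kappa=0$) mixes: it conserves $|\theta|_{L^2}=|\theta_0|_{L^2}$ while driving $|\theta(t)|_{H^{-1}}\to0$ as $t\to T$, so the interpolation inequality $|\theta|_{L^2}^2\le|\theta|_{H^{-1}}\,|\nabla\theta|_{L^2}$ forces the gradient to blow up. I would turn this into a lower bound on the \emph{viscous} dissipation through a one-step lemma comparing $|\theta^\kappa(t_{n+1})|_{L^2}$ to $|\theta^\kappa(t_n)|_{L^2}$ and tracking the dominant small scale $k_n$, so that $|\nabla\theta^\kappa(t_n)|_{L^2}\sim k_n|\theta^\kappa(t_n)|_{L^2}$. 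The damping over $I_n$ is governed by $\kappa k_n^2|I_n|\sim\kappa k_n$, which splits the cascade into a subcritical phase $k_n\ll\kappa^{-1}$, where diffusion is negligible and energy is essentially conserved while the scale sharpens, and a critical phase around $k_{n_*}\sim\kappa^{-1}$, where a factor $\exp(-c\,\kappa k_n)$ removes a definite fraction of energy. Summing the per-stage dissipation $\sum_n\kappa k_n^2|I_n|\,|\theta^\kappa(t_n)|_{L^2}^2\sim\sum_{n\lesssim n_*}\kappa k_n\,|\theta_0|_{L^2}^2\sim|\theta_0|_{L^2}^2$ — where taking $\mu$ large keeps the total subcritical loss small, so that $|\theta^\kappa(t_n)|_{L^2}\approx|\theta_0|_{L^2}$ up to $n_*$ — produces an energy loss bounded below independently of $\kappa$, and the self-similar structure makes the constant absolute.

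The \textbf{main obstacle} is the one-step lemma near the critical phase: one must show that the viscous solution's gradient genuinely reaches the scale $k_n$ (equivalently, that its spectrum stays concentrated near $k_n$ rather than being smeared across frequencies by the shear), so that the diffusion ``sees'' the small scale and removes a uniform fraction of energy, while simultaneously controlling the accumulated subcritical dissipation so that a definite amount of energy survives to the critical stage. Making this quantitative amounts to a stability estimate of $\theta^\kappa$ against the explicitly mixed inviscid profile on each interval, with errors — from both the frequency spread of the shear and the diffusion — that must be summed over the infinitely many stages accumulating at $T$; closing this sum uniformly in $\kappa$ is the crux of the argument.
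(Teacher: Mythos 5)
Your construction and reductions track the paper's quite closely: the factorization through $\mathbb T^2$, the reduction to a pure harmonic via linearity and $L^2$-contractivity of the solution operator, the alternating-shear cascade whose frequency diverges exactly at $t=T$, and the regularity budget $\sum_n |I_n|\,a_n k_n^{\alpha}<\infty$ are all essentially what the paper does (its parameters $t_j=2^{-j}$, $N_j=2^{(1+\alpha)j}$ make the per-step displacement gradient $t_jN_j=2^{\alpha j}$ grow, so the scalar frequency grows super-exponentially like $2^{\alpha j(j+1)/2}$ rather than geometrically, but that is a cosmetic difference; note also that the paper uses a smoothed sawtooth profile $S_\eps$ with $|S_\eps'|=1$ off a small set precisely so that the $L^2$ lower bound on $\partial_{i_{j+1}}\theta_{j+1}$ survives a bootstrap in $j$). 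However, the step you flag as the ``main obstacle'' is the entire technical content of the theorem, and the route you sketch toward it starts from a false premise. The inviscid flow here does \emph{not} mix: the total particle displacement $\sum_n a_n|I_n|$ is finite, the flow maps converge uniformly, and the paper explicitly points out that these velocity fields are not mixing --- which is why Proposition~\ref{thm:criterion2}, the formalization of your $H^{-1}$-interpolation mechanism, is \emph{not} the criterion used to prove Theorem~\ref{mainthm}. The gradient blow-up of the inviscid solution has to be proved directly by propagating lower bounds on $|\partial_{i_j}\theta_j|_{L^2}$ through the composition of shears (Lemma~\ref{thm:inviscid-bounds}), not deduced from decay of a negative norm.

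More importantly, the transfer from inviscid gradient growth to viscous dissipation is left genuinely open in your outline: you would need the lower bound in $|\nabla\theta^\kappa(t_n)|_{L^2}\sim k_n|\theta^\kappa(t_n)|_{L^2}$, i.e.\ spectral concentration of the viscous solution, plus a summation of stability errors over infinitely many stages accumulating at $T$, and you give no mechanism for either. The paper closes this differently and more cheaply via Proposition~\ref{thm:criterion1}: assume for contradiction that $\kappa\int_0^1|\nabla\theta^\kappa|^2<\chi$; the energy identity for $\theta-\theta^\kappa$ then gives $\sup_{t\le T_k}|\theta-\theta^\kappa|_{L^2}\lesssim\chi^{1/4}$ up to the time $T_k$ where $\kappa\int_0^{T_k}|\nabla\theta|^2=1$; the interpolation $|f|_{\dot H^1}^2\le|f|_{L^2}|f|_{\dot H^2}$ applied to the difference, combined with $\kappa$-uniform $\dot H^2$ \emph{upper} bounds on both $\theta$ and $\theta^\kappa$ of the form $C|\theta|_{\dot H^1}^2/|\theta_0|_{L^2}$ (Lemmas~\ref{thm:inviscid-bounds} and~\ref{thm:viscous-bounds}), yields $|\theta^\kappa|_{\dot H^1}\ge(1-O(\chi^{1/8}))|\theta|_{\dot H^1}$ and hence $\kappa\int_0^{T_k}|\nabla\theta^\kappa|^2\ge(1-O(\chi^{1/8}))^2>\chi$, a contradiction. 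No spectral localization and no stage-by-stage error accounting are required; what is required, and missing from your proposal, is the uniform-in-$\kappa$ $H^2$ bound on the viscous solution, which the paper obtains by writing the equations for all second derivatives of $\theta^\kappa$ across each shear interval and integrating them. Until you supply that bound (or an honest proof of your one-step lemma), the argument does not close.
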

\begin{remark*}
It  is not difficult to modify the velocity field so that anomalous dissipation occurs for any initial data whose ``width'' of the spectrum is bounded by some finite constant. 
\end{remark*}

For arbitrary $H^2$ initial data, a small modification of the velocity field used above will also exhibit anomalous dissipation.
However the velocity field and the dissipation rate will depend on the data.
\begin{theorem}[Data dependent rate and velocity field]\label{mainthm2}
  Fix $T>0$, $d \geq 2$, $\alpha \in [0,1)$, and a mean-zero $\theta_0\in H^2(\mathbb{T}^d)$. There exists a divergence-free velocity field  \begin{equation*}
    u\in C^\infty([0,T)\times \mathbb{T}^d) \cap L^1([0,T]; C^\alpha(\mathbb{T}^d)) \cap L^\infty([0,T]\times \mathbb{T}^d) \,,
  \end{equation*} and $\chi_\alpha(\theta_0)>0$ 
  so that we have
  \begin{equation}\label{anomaly2}
    \kappa \int_0^T |\nabla \theta^\kappa|_{L^2}^2 \rmd t
    \geq \chi_\alpha(\theta_0) \abs{\theta_0}^2_{L^2}. 
  \end{equation}
\end{theorem}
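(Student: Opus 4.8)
The plan is to reduce Theorem \ref{mainthm2} to the single-frequency mechanism of Theorem \ref{mainthm} by a spectral truncation, exploiting the linearity of \eqref{visceqn} and the fact that here both the field and the rate may depend on $\theta_0$. First I would reformulate the goal through the energy balance \eqref{e:ee}: since $\kappa\int_0^T\abs{\grad\theta^\kappa}_{L^2}^2\rmd t=\tfrac12\bigl(\abs{\theta_0}_{L^2}^2-\abs{\theta^\kappa(T)}_{L^2}^2\bigr)$, proving \eqref{anomaly2} is equivalent to producing a field $u$ in the class \eqref{e:ureg} together with a $\kappa$-independent constant $c=c(\theta_0)\in(0,1)$ for which $\abs{\theta^\kappa(T)}_{L^2}\le(1-c)\abs{\theta_0}_{L^2}$ holds for every $\kappa>0$; one then sets $\chi_\alpha(\theta_0)=\tfrac12\bigl(1-(1-c)^2\bigr)$.

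Next I would split the datum in frequency. Writing $P_{\le N}$ for the projection onto Fourier modes with $\abs{k}\le N$, membership $\theta_0\in H^2$ (indeed $\theta_0\in L^2$ suffices) gives $\abs{(1-P_{\le N})\theta_0}_{L^2}\to 0$ as $N\to\infty$. So, for a threshold $\epsilon>0$ fixed below, I choose $N=N(\theta_0)$ so that the tail $r\defeq(1-P_{\le N})\theta_0$ satisfies $\abs{r}_{L^2}\le\epsilon\abs{\theta_0}_{L^2}$, and I set $g\defeq P_{\le N}\theta_0$, a mean-zero function bandlimited to $\abs{k}\le N$. By the Remark following Theorem \ref{mainthm}, there is a single divergence-free field $u$ in the class \eqref{e:ureg}, depending on $\theta_0$ only through $N$, producing anomalous dissipation for all data of bounded spectral width, in particular for $g$; equivalently, the solution $\phi^\kappa$ of \eqref{visceqn} with datum $g$ obeys $\abs{\phi^\kappa(T)}_{L^2}^2\le(1-c_0)\abs{g}_{L^2}^2$ for some $\kappa$-independent $c_0=c_0(N)>0$.

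It remains to transfer this bound to the full datum. By linearity of \eqref{visceqn} one has $\theta^\kappa=\phi^\kappa+\eta^\kappa$, where $\eta^\kappa$ solves the same equation with datum $r$; applying \eqref{e:ee} to $\eta^\kappa$ gives the contraction $\abs{\eta^\kappa(t)}_{L^2}\le\abs{r}_{L^2}$ for all $t$. The triangle inequality together with $\abs{g}_{L^2}\le\abs{\theta_0}_{L^2}$ then yields
\begin{equation*}
  \abs{\theta^\kappa(T)}_{L^2}\le\abs{\phi^\kappa(T)}_{L^2}+\abs{\eta^\kappa(T)}_{L^2}\le\sqrt{1-c_0}\,\abs{g}_{L^2}+\epsilon\abs{\theta_0}_{L^2}\le\bigl(\sqrt{1-c_0}+\epsilon\bigr)\abs{\theta_0}_{L^2}.
\end{equation*}
Choosing $\epsilon\defeq\tfrac12\bigl(1-\sqrt{1-c_0}\bigr)>0$ makes the prefactor $\sqrt{1-c_0}+\epsilon=1-c$ with $c=c(\theta_0)\in(0,1)$ and independent of $\kappa$, which closes the argument via the reduction of the first paragraph.

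The ingredients above—the energy identity, the $L^2$ decay of the high-frequency tail, linear superposition, and the semigroup contraction—are routine; the genuine obstacle is the input I quoted from the Remark, namely upgrading Theorem \ref{mainthm} from a single harmonic to data of bounded spectral width. I stress that this does \emph{not} follow from Theorem \ref{mainthm} by linearity alone: although every harmonic in the band is dissipated at the universal rate, the splitting $\phi^\kappa=\sum_j\phi_j^\kappa$ only controls $\sum_j\abs{\phi_j^\kappa(T)}_{L^2}^2$, whereas the quantity we need is $\bigl|\sum_j\phi_j^\kappa(T)\bigr|_{L^2}^2$, so the cross terms at the final time must be handled. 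I expect the resolution to come from the geometry of the cascade: distinct initial frequencies are driven into essentially disjoint, and hence nearly $L^2$-orthogonal, frequency ranges at the diffusive cutoff, so the individually dissipated energies add without destructive interference and a net rate $c_0(N)>0$ survives. Establishing this near-orthogonality uniformly in $\kappa$ is the crux of the required modification.
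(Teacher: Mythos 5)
Your overall strategy---truncate $\theta_0$ in frequency, apply a bounded-spectral-width version of Theorem \ref{mainthm} to $P_{\le N}\theta_0$, and absorb the tail via the $L^2$ contraction and the triangle inequality---is structurally reasonable (the tail-absorption step is exactly how the paper itself handles $L^2$ perturbations of a harmonic in the proof of Theorem \ref{mainthm}), but it is not a proof: the entire difficulty has been displaced into the Remark you cite, which the paper asserts without proof and which you yourself identify as ``the crux'' without supplying an argument. Your proposed mechanism for that missing lemma---that distinct initial harmonics are cascaded into nearly disjoint frequency ranges so their dissipated energies add without interference---is speculative and is not the paper's mechanism; nothing in the construction forces the viscous evolutions of different harmonics under the same alternating shears to decorrelate in $L^2$ at time $T$, so the cross terms you correctly flag are never controlled. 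There is also an unaddressed circularity in your parameter choices: $\epsilon$ is defined in terms of $c_0=c_0(N)$, but $N$ is chosen in terms of $\epsilon$; closing this loop requires knowing that $c_0(N)$ does not degrade faster than the rate $\abs{(1-P_{\le N})\theta_0}_{L^2}/\abs{\theta_0}_{L^2}$ decays (which for $H^2$ data is only $O(N^{-2})$), and that quantitative information is again part of the unproven input.

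The paper's route is different and avoids frequency truncation altogether. It runs the inviscid/viscous bootstrap of Lemmas \ref{thm:inviscid-bounds} and \ref{thm:viscous-bounds} directly on the full datum $\theta_0\in H^2$ and feeds the result into Proposition \ref{thm:criterion1}, making the velocity field data-dependent in two places: each shear carries a sign $(-1)^{s_j}$, and the smoothing parameters $\eps_j$ carry an extra factor $a_0=a_0(\theta_0)$. The sign selection is what replaces your orthogonality heuristic: writing $\theta_{j+1}^{\pm}=\theta_j(x\pm t_jS_{\eps_j}(N_jy),y)$, one has $\sum_{\pm}\abs{\rd_{i_{j+1}}\theta_{j+1}^{\pm}}_{L^2}^2=2\abs{\rd_{i_{j+1}}\theta_j}_{L^2}^2+2(t_jN_j)^2\abs{S'_{\eps_j}(N_j\cdot)\,\rd_{i_j}\theta_j}_{L^2}^2$ because the cross terms cancel between the two sign choices, so at least one sign gives $\abs{\rd_{i_{j+1}}\theta_{j+1}}_{L^2}\ge t_jN_j\abs{S'_{\eps_j}(N_j\cdot)\,\rd_{i_j}\theta_j}_{L^2}$, and the $\dot H^1$ growth needed for Proposition \ref{thm:criterion1} then propagates by the same bootstrap once $a_0$ is taken small enough (depending on $\theta_0$) to initialize it. If you wish to salvage your approach you would essentially have to prove the Remark, and the natural proof is again an averaging/sign-selection argument of this type---at which point the frequency truncation becomes unnecessary.
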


Our constructions are sharp in the sense that if $\alpha = 1$, then the dissipation must vanish (i.e.\ $\chi_1 = 0$).
In fact, if $u\in L^1([0,T]; W^{1,1}(\mathbb{T}^d))$ then all weak solutions of the inviscid transport equation are renormalized and hence conservative \cite{DiPernaLions89}.
Moreover, since $\theta^\kappa \to \theta$ weakly in $L^2$ and the norms converge (by lower semi-continuity of $L^2$ under weak limits), the convergence is in fact strong and so we must have~$\chi_1 = 0$.
In our construction, the scalar~$\theta^\kappa$ does not retain any H\"older regularity uniformly in $\kappa$ on the whole time interval $[0,T]$. 
As such, our result establishes the sharpness of the Obukhov-Corrsin theory (discussed at the end in \S \ref{disc}) for fields which lose regularity at a single instance in time in the endpoint case of $u\in C^\alpha$ with $\alpha<1$ and $\theta\in C^\beta$ with $\beta=0$.
 In light of this connection, Theorem \ref{mainthm} can be understood also as a proof of the analogue of Onsager's conjecture for passive scalar turbulence in our specific setting. 
 \smallskip

We prove Theorem~\ref{mainthm} by constructing a velocity field which develops smaller and smaller scales with time, mimicking the time development of a turbulence cascade.
As a result, the velocity field has non-trivial energy at  ``infinite frequency'' at the final time, $T$.
At this point in time, the velocity can be made to be H\"older $C^\alpha$ for any $\alpha<1$ but not better.
Due to the precise nature of the construction, we track explicitly the resulting cascade of scalar energy to high-frequency.
The scalar field $\theta^\kappa$ is bounded, but as mentioned above, is not uniformly  H\"older  for any $\beta>0$.

The velocity field we construct alternates horizontal and vertical shears, motivated by the work of Pierrehumbert~\cite{Pierrehumbert94}.
The velocity fields used in~\cite{Pierrehumbert94} involves sinusoidal shears of a single frequency, with a random phase shift.
Our velocity fields, on the other hand, require the use of higher frequencies as time progresses and possess multiple scales.
 \smallskip

We now briefly digress and present an application of Theorem~\ref{mainthm2} showing non-uniqueness of solutions to the transport equation with an irregular drift.
Recall, that solutions to the transport equation with an $L^1([0, T]; W^{1, \infty}(\T^d))$ drift are easily seen to be unique.
Seminal work of DiPerna and Lions~\cites{DiPernaLions89} show that for $L^1(0, T]; W^{1, 1}(\T^d)$ incompressible velocity fields, all weak solutions are renormalized and hence unique.
Ambrosio~\cite{Ambrosio04} extended it further to $L^1( [0, T]; \mathit{BV}(\T^d) )$ incompressible vector fields. 
More generally, uniqueness of weak solutions to the transport equation is closely connected to energy conservation of solutions.
In the DiPerna Lions framework, conservation of energy follows from the so-called re-normalization property.
For lower regularity velocity fields several counterexamples to uniqueness, and consequently to conservation of energy for solutions of the transport equation, are known~\cite{Aizenman78,ColombiniLuoEA03,Depauw03,AlbertiBianchiniEA14,CrippaGusevEA15,ModenaSzekelyhidi18}.
In particular, Alberti et\ al.~\cite{AlbertiBianchiniEA14} abstractly show the existence of a H\"older continuous, time independent, divergence free vector field for which the transport equation does not have a unique solution.
In this direction, we use Theorem~\ref{mainthm2} to produce an explicit, divergence free drift for which the transport equation does not have a unique solution.
In our example the drift is smooth, except at one point in time, and can be chosen to be $L^1_t C^{\alpha}_x$, for any $\alpha < 1$.

\begin{theorem}[Non-uniqueness of the transport equation]\label{nonuniq}
 Fix $T>0$, $d\geq 2$, $\alpha \in [0,1)$ and a mean-zero $\theta_0\in H^2$. Let $u_*$  
  be the divergence-free velocity field from Theorem \ref{mainthm2}.  Let $u$, defined on $[0,2T]$, be
  \begin{equation*}
  u(t) = 
  \begin{cases} \phantom{-}u_*(t) & t\in [0,T),\\
  -u_*(2T-t) & t\in [T,2T].
  \end{cases}
  \end{equation*}
  Then there are at least two weak solutions $\theta, \bar{\theta}\in C_w([0,2T];L^2(\mathbb{T}^d))$ of the transport equation
  \begin{equation}\label{inveqn}
    \partial_t \theta + u \cdot \grad \theta = 0
  \end{equation}
  with initial data $\theta_0$.
\end{theorem}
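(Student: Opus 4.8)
The plan is to exhibit two distinct weak solutions with the same data $\theta_0$ by exploiting the anti-symmetry of $u$ about $t=T$, namely $u(T+s)=-u(T-s)$: one solution that conserves the $L^2$ norm by running forward on $[0,T]$ and then ``rewinding'' on $[T,2T]$, and a second solution obtained as a vanishing-viscosity limit, which must shed a definite fraction of its energy because of the anomalous dissipation guaranteed by Theorem~\ref{mainthm2}. Distinctness then follows simply by comparing $L^2$ norms at the final time.

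First I would construct the energy-conserving solution $\bar\theta$. Since $u_*$ is smooth on $[0,T)\times\T^d$ and $\theta_0\in H^2$, the method of characteristics produces a unique classical solution $\theta$ of $\partial_t\theta + u_*\cdot\grad\theta=0$ on $[0,T)$; because the associated flow is volume preserving, $\abs{\theta(t)}_{L^2}=\abs{\theta_0}_{L^2}$ for all $t<T$. Writing the equation as $\partial_t\theta=-\div(u_*\theta)$ and using $u_*\in L^\infty([0,T]\times\T^d)$ shows $\partial_t\theta$ is bounded in $L^1([0,T];H^{-1})$, so $\theta$ extends to an element of $C_w([0,T];L^2)$ with a well-defined weak limit $\theta(T)$. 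The reflection $\bar\theta(t):=\theta(t)$ for $t\in[0,T]$ and $\bar\theta(T+s):=\theta(T-s)$ for $s\in[0,T]$ is then a weak solution on all of $[0,2T]$: the two pieces match at $t=T$, and the sign flip $u(T+s)=-u(T-s)$ exactly cancels the sign produced by reversing time, so the transport equation holds on $[T,2T]$. By construction $\bar\theta(2T)=\theta(0)=\theta_0$, whence $\abs{\bar\theta(2T)}_{L^2}=\abs{\theta_0}_{L^2}$.

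Next I would produce the dissipative solution $\theta$ as a vanishing-viscosity limit. Let $\theta^\kappa$ solve \eqref{visceqn} with the full $u$ on $[0,2T]$ and data $\theta_0$. The energy identity \eqref{e:ee} gives a uniform $L^\infty([0,2T];L^2)$ bound, and since $\kappa\int_0^{2T}\abs{\grad\theta^\kappa}_{L^2}^2\rmd t$ is bounded, Cauchy--Schwarz shows $\kappa\Delta\theta^\kappa\to0$ in $L^1([0,2T];H^{-1})$; together with $\div(u\theta^\kappa)$ bounded in $L^\infty([0,2T];H^{-1})$ this controls $\partial_t\theta^\kappa$ in $L^1([0,2T];H^{-1})$. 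By the Aubin--Lions--Simon lemma, $\{\theta^\kappa\}$ is precompact in $C([0,2T];H^{-1})$, so along a subsequence $\theta^\kappa\to\theta$ in $C([0,2T];H^{-1})$ and weakly-$*$ in $L^\infty([0,2T];L^2)$. Passing to the limit in the weak formulation is straightforward because the equation is linear in the scalar: the diffusion term vanishes and the drift term converges since $u\cdot\grad\varphi\in L^1([0,2T];L^2)$ pairs against the weak-$*$ limit. Hence $\theta\in C_w([0,2T];L^2)$ is a weak solution of \eqref{inveqn} with $\theta(0)=\theta_0$.

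Finally I would combine the two estimates. Restricting $\theta^\kappa$ to $[0,T]$ recovers exactly the diffusive solution of Theorem~\ref{mainthm2} (by uniqueness for the parabolic problem), so $\kappa\int_0^T\abs{\grad\theta^\kappa}_{L^2}^2\rmd t\ge\chi_\alpha(\theta_0)\abs{\theta_0}_{L^2}^2$; since diffusion only removes energy on $[T,2T]$, \eqref{e:ee} yields $\abs{\theta^\kappa(2T)}_{L^2}^2\le(1-2\chi_\alpha(\theta_0))\abs{\theta_0}_{L^2}^2$. Weak lower semicontinuity of the norm then gives $\abs{\theta(2T)}_{L^2}^2\le(1-2\chi_\alpha(\theta_0))\abs{\theta_0}_{L^2}^2<\abs{\theta_0}_{L^2}^2=\abs{\bar\theta(2T)}_{L^2}^2$, so $\theta\ne\bar\theta$ and non-uniqueness follows. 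The main obstacle is the vanishing-viscosity step, namely establishing enough time-compactness to identify a limit that is a genuine weak solution in $C_w([0,2T];L^2)$; the reflection construction of $\bar\theta$ and the energy bookkeeping are comparatively routine once the anti-symmetry of $u$ about $t=T$ is invoked.
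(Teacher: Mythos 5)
Your proposal is correct and follows essentially the same route as the paper: a dissipative solution obtained as a vanishing-viscosity limit via Aubin--Lions, a conservative solution obtained by reflecting about $t=T$ using the antisymmetry $u(T+s)=-u(T-s)$, and distinctness read off from the $L^2$ norm at time $2T$ via weak lower semicontinuity. The only cosmetic difference is that you reflect the classical characteristics solution rather than the vanishing-viscosity limit (these coincide on $[0,T)$ since $u_*$ is smooth there), and the step you compress into ``the sign flip exactly cancels the sign produced by reversing time'' is exactly what the paper verifies in detail by splitting test functions into even and odd parts about $t=T$.
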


We prove Theorem~\ref{nonuniq} by constructing one solution as a vanishing viscosity limit, and the other using time reversibility.
The vanishing viscosity solution is dissipative and loses a non-zero fraction of its initial $L^2$-energy.
The time reversible solution, on the other hand, ends with exactly the same $L^2$-energy as it started with.
The full is presented in Section~\ref{s:nonuniq}.
  We conclude this subsection with two remarks concerning anomalous dissipation in the random setting, and magnetic dynamos.

\begin{remark}[Anomalous Dissipation in the Randomized Setting]
  Examples of anomalous in a statistical setting can be found in studies of the Kraichnan model \cite{FalkovichGawedzkiEA01,Gawedzki08}.
  This model advects the scalar by a Gaussian, white-in-time velocity field which is only H\"{o}lder continuous in space and anomalous dissipation for passive scalars can be proved upon taking expectation of \eqref{an} over the random velocity field.
  For precise rigorous statements, see the works  \cite{LeJanRaimond02,LeJanRaimond04}.
  The mechanism for anomalous dissipation discovered in the Kraichnan model and which holds in far greater generality is the breakdown of uniqueness of Lagrangian particle trajectories  or \emph{spontaneous stochasticity} \cite{BernardGawedzkiEA98,DrivasEyink17,EyinkDrivas15}.
  While this phenomenon is expected to be robust in a turbulent setting, the proof of anomalous dissipation and spontaneous stochasticity in the Kraichnan model rely heavily on the Gaussian nature of the advecting velocity and, more importantly, on the white-in-time correlation.
  Moreover, since the velocity field is only distributional in time (formally the temporal regularity is like a derivative of Brownian motion), it is not clear how to generate examples of~\eqref{an} for distributional solutions to the advection diffusion equation in a fixed deterministic velocity field.    
 We remark also that \cite{BedrossianBlumenthalEA19b} studies a related problem of anomalous dissipation of the scalar in forced statistically steady state, allowing for the advecting velocity to be a solution of forced Navier-Stokes with independent forcing random. Namely, \cite{BedrossianBlumenthalEA19b} establishes a constant flux of scalar energy through all small length-scales is established in a permanent regime where scalar energy is input in a (statistically) constant rate.
\end{remark}

 \begin{remark}[Magnetic Dynamo Example]
 Our construction has implications for the existence of a (finite time) magnetic dynamo in two dimensions.  In particular, consider the 2D resistive passive vector equation
 \begin{align*}
 \partial_t B^\kappa+ u \cdot \nabla B^\kappa\ - \ &B^\kappa\cdot \nabla u = \kappa \Delta B^\kappa,\\
 \nabla \cdot B^\kappa =0,  \ \ & \ \ \nabla \cdot u = 0,\\
B^\kappa|_{t=0}&=B_0,
 \end{align*}
 modeling the evolution of a magnetic field $B$ in a prescribed velocity field $u$.
 The unique solution of the above equation can be constructed with a stream function $B^\kappa=\nabla^\perp \psi^\kappa$ solving
 \begin{equation}
 \partial_t \psi^\kappa + u \cdot \nabla \psi^\kappa = \kappa \Delta \psi^\kappa
 \end{equation}
 provided with any  initial data $\psi_0$ with the property that $\nabla^\perp\psi_0=B_0$.  Thus, our results for anomalous dissipation apply to $\psi^\kappa$ which implies that if $u$ is chosen as in Theorem \ref{mainthm} then
 \begin{equation}\label{dynamo}
       \int_0^T |B|_{L^2}^2 \rmd t
      \geq \frac{ \chi }{\kappa}.
 \end{equation}
 This behavior shows unbounded growth of the magnetic field as $\kappa\to 0$ in finite time, seemingly in violation of the 2d anti-dynamo theorems (see chapter 4 of \cite{ArnoldKhesin98}).  However, these results assume advecting velocities are smooth for infinite time at fixed $\kappa$.
 \end{remark}

\subsection{General Criterion for Anomalous Dissipation}
The proof of Theorem~\ref{mainthm} involves comparing $\theta^\kappa$, the solution of the advection diffusion equation~\eqref{visceqn}, to solutions of the transport equation~\eqref{inveqn}.
As a result, we obtain three criterion that guarantee some form of anomalous dissipation.

For each of the results below we fix $T > 0$, assume $u \in L^\infty_\mathrm{loc} ([0, T); W^{1, \infty}(\T^d))$ is divergence free, and let $\theta^\kappa$ and $\theta$ be solutions to~\eqref{visceqn} and~\eqref{inveqn} respectively with the same, $\kappa$ independent, mean zero initial data~$\theta_0 \in H^2(\T^d)$.
The first result is the criterion that will be used in the proof of Theorem~\ref{mainthm}.

\begin{proposition}\label{thm:criterion1}
  If
  \begin{equation}\label{assumProp12}
    \lim_{t\rightarrow T}\int_0^t|\nabla\theta |_{L^2}^2ds=+\infty \,,
    \qquad\text{and}\qquad
    |\theta(t)|_{\dot{H}^1}^2
      \ge c \paren[\big]{
	|\theta(t)|_{\dot{H}^2}+ |\theta^\kappa(t)|_{\dot{H}^2}
	}|\theta_0|_{L^2}
  \end{equation}
  for all $t\in [0,T)$, for a fixed constant $c\in(0,1)$ independent of $\kappa$ and $t$,  then
  \begin{equation*}
  	\begin{split}
  	\kappa \int_0^T |\nb \theta^{\kappa}|_{L^2}^2 dt \ge  \left(\frac{c}{2}\right)^4 |\theta_0|_{L^2}^2.
  	\end{split}
  \end{equation*}
\end{proposition}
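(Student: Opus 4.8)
The plan is to compare the diffusive solution $\theta^\kappa$ with the inviscid solution $\theta$ carrying the same data $\theta_0$, tracking the gap $w \defeq \theta^\kappa-\theta$ together with the correlation $g(t)\defeq \langle \theta^\kappa(t),\theta(t)\rangle_{L^2}$. Since $w(0)=0$ and $u$ is divergence free, a direct computation yields the two identities I will rely on: $\tfrac12\tfrac{d}{dt}|w|_{L^2}^2=\kappa\langle \Delta\theta^\kappa,w\rangle$, and $g'(t)=-\kappa\int \nabla\theta^\kappa\cdot\nabla\theta$ (the advection terms cancel after integrating by parts). Writing $D(t)\defeq \kappa\int_0^t|\nabla\theta^\kappa|_{L^2}^2\,ds$ for the accumulated dissipation and combining energy conservation of $\theta$ (valid for $t<T$ since $u\in W^{1,\infty}$) with the energy balance for $\theta^\kappa$, these produce the bookkeeping identity $|w(t)|_{L^2}^2=2\bigl(|\theta_0|_{L^2}^2-g(t)\bigr)-2D(t)$. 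Thus a lower bound on $D$ follows once I show $\theta^\kappa$ and $\theta$ de-correlate by a definite amount before time $T$.

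The heart of the argument is an exit-time (continuation) step. Define $t^*$ to be the first time at which $|w(t)|_{L^2}=\tfrac{c}{2}|\theta_0|_{L^2}$, and I first claim $t^*<T$. While $|w|_{L^2}\le \tfrac{c}{2}|\theta_0|_{L^2}$ I estimate $-g'=\kappa|\theta|_{\dot H^1}^2+\kappa\int\nabla w\cdot\nabla\theta\ge \kappa|\theta|_{\dot H^1}^2-\kappa|w|_{L^2}|\theta|_{\dot H^2}$, and the $|\theta|_{\dot H^2}$ term of hypothesis~\eqref{assumProp12}, in the form $|\theta|_{\dot H^2}\le |\theta|_{\dot H^1}^2/(c|\theta_0|_{L^2})$, turns this into the decorrelation bound $-g'(s)\ge \tfrac{\kappa}{2}|\theta(s)|_{\dot H^1}^2$. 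Integrating this and using the trivial bound $|g|\le |\theta_0|_{L^2}^2$ would force $\tfrac{\kappa}{2}\int_0^t|\theta|_{\dot H^1}^2\,ds\le 2|\theta_0|_{L^2}^2$ for every $t<T$; since $\kappa$ is fixed and positive, this contradicts the blow-up hypothesis $\int_0^t|\theta|_{\dot H^1}^2\,ds\to +\infty$. Hence $|w|_{L^2}$ reaches the level $\tfrac{c}{2}|\theta_0|_{L^2}$ at some $t^*<T$, where every quantity above is still finite.

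It remains to convert the information at $t^*$ into a lower bound for $D(t^*)$, and here I use both terms of hypothesis~\eqref{assumProp12}. On the one hand, from $\tfrac{d}{dt}|w|_{L^2}\le \kappa|\theta^\kappa|_{\dot H^2}$ (Cauchy--Schwarz applied to $\kappa\langle\Delta\theta^\kappa,w\rangle$) and the bound $|\theta^\kappa|_{\dot H^2}\le |\theta|_{\dot H^1}^2/(c|\theta_0|_{L^2})$, integrating to $t^*$ gives $\tfrac{c}{2}|\theta_0|_{L^2}=|w(t^*)|_{L^2}\le \tfrac{\kappa}{c|\theta_0|_{L^2}}\int_0^{t^*}|\theta|_{\dot H^1}^2$, i.e. a lower bound $\kappa\int_0^{t^*}|\theta|_{\dot H^1}^2\ge \tfrac{c^2}{2}|\theta_0|_{L^2}^2$ on the inviscid enstrophy budget. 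On the other hand, the decorrelation bound together with $-g'\le \kappa|\nabla\theta^\kappa|_{L^2}|\nabla\theta|_{L^2}$ yields the pointwise comparison $|\nabla\theta^\kappa(s)|_{L^2}\ge \tfrac12|\nabla\theta(s)|_{L^2}$ on $[0,t^*]$, whence $D(t^*)\ge \tfrac14\kappa\int_0^{t^*}|\theta|_{\dot H^1}^2$. Multiplying the two facts gives $D(T)\ge D(t^*)\ge \tfrac{c^2}{8}|\theta_0|_{L^2}^2$, which in particular is at least $(\tfrac{c}{2})^4|\theta_0|_{L^2}^2$ since $c<1$.

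The main obstacle is the exit-time step: one must guarantee that the threshold $\tfrac{c}{2}|\theta_0|_{L^2}$ is actually attained strictly before $T$, and this is exactly where the blow-up hypothesis is indispensable, for without it the gap $w$ could remain small and no dissipation would be forced. A secondary technical point is justifying the manipulations (the energy identities, the integrations by parts, and the differentiation of $|w|_{L^2}$) on $[0,t^*]$; these are legitimate because $\theta^\kappa$ is smooth for $\kappa>0$ and $\theta$ stays in $H^2$ for $t<T$ by $u\in L^\infty_{\mathrm{loc}}([0,T);W^{1,\infty})$, with $t^*<T$. I would finally remark that the constant is not optimized: the displayed $(c/2)^4$ is a convenient clean value, and the argument above in fact delivers the somewhat larger $c^2/8$.
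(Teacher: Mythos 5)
Your proof is correct, but it takes a genuinely different route from the paper's. The paper argues by contradiction: assuming $\kappa\int_0^T|\nabla\theta^\kappa|_{L^2}^2\,dt<\chi$ with $\chi=(c/2)^4$, it uses the blow-up hypothesis to pick a time $T_\kappa<T$ normalizing $\kappa\int_0^{T_\kappa}|\nabla\theta|_{L^2}^2\,ds=|\theta_0|_{L^2}^2$, bounds $|\theta-\theta^\kappa|_{L^2}\lesssim\chi^{1/4}$ via Cauchy--Schwarz on the energy identity, and then --- this is the step you avoid entirely --- interpolates $|\theta-\theta^\kappa|_{\dot H^1}^2\le|\theta-\theta^\kappa|_{L^2}|\theta-\theta^\kappa|_{\dot H^2}$ and uses the hypothesis on $|\theta|_{\dot H^2}+|\theta^\kappa|_{\dot H^2}$ plus the reverse triangle inequality to force $|\theta^\kappa|_{\dot H^1}\ge(1-O(\chi^{1/8}))|\theta|_{\dot H^1}$, whence a contradiction. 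You instead work directly with the correlation $g(t)=\langle\theta^\kappa,\theta\rangle$ and an exit time $t^*$ for $|\theta^\kappa-\theta|_{L^2}$: the blow-up hypothesis enters only to show $t^*<T$ (via the decorrelation bound $-g'\ge\tfrac{\kappa}{2}|\theta|_{\dot H^1}^2$), and the two halves of hypothesis \eqref{assumProp12} are used separately --- $|\theta^\kappa|_{\dot H^2}$ to lower-bound the inviscid enstrophy budget up to $t^*$, and $|\theta|_{\dot H^2}$ to get the pointwise comparison $|\nabla\theta^\kappa|_{L^2}\ge\tfrac12|\nabla\theta|_{L^2}$ on $[0,t^*]$. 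Both arguments rest on the same two ingredients (the $\dot H^2$ control and the divergence of the inviscid enstrophy integral), but yours is direct rather than by contradiction, localizes where the dissipation must occur, and indeed yields the slightly better constant $c^2/8\ge(c/2)^4$ for $c\in(0,1)$; the paper's version has the advantage that its normalization-time device is recycled almost verbatim in the proof of the mixing criterion (Proposition \ref{thm:criterion2}). The only technical points to keep in mind are the ones you already flag: continuity of $t\mapsto|\theta^\kappa(t)-\theta(t)|_{L^2}$ (needed for the exit time to exist) and the differential inequality $\tfrac{d}{dt}|w|_{L^2}\le\kappa|\theta^\kappa|_{\dot H^2}$ at zeros of $|w|_{L^2}$, both standard given $\theta,\theta^\kappa\in C([0,T);H^2)$ for $t<T$.
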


Even though Proposition~\ref{thm:criterion1} is what we use in the proof of Theorem~\ref{mainthm}, we note that it involves a condition on \emph{both} $\theta$ and $\theta^\kappa$.
The next two results will involve conditions on the inviscid equation alone.

\begin{proposition}\label{thm:criterion2}
  If
  \begin{equation}\label{e:crit2}
    \int_0^T \abs{\grad \theta(s)}_{L^2}^2 \rmd s = +\infty
    \qquad\text{and}\qquad
    |\theta(t)|_{H^{-1}}
      \leq C\frac{\abs{\theta_0}_{L^2}^2}{\abs{\theta(t)}_{\dot H^1}},
  \end{equation}
 for all $t \in [0, T)$ and some constant $C \geq 1$ independent of $\kappa$ and $t$, then
  \begin{equation*}
 \kappa \int_0^T |\nabla\theta^\kappa|^2_{L^2}\rmd t
      \geq
	  \frac{1}{64 C^2}{\abs{\theta_0}_{L^2}^2}.
  \end{equation*}
\end{proposition}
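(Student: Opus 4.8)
The plan is to reduce the statement, via the energy identity \eqref{e:ee}, to showing that the viscous solution must lose a definite fraction of its $L^2$ energy, and then to extract that loss from the blow-up of the inviscid enstrophy. Since $u$ is divergence free the transport solution conserves energy, $|\theta(t)|_{L^2}=|\theta_0|_{L^2}$, while \eqref{e:ee} reads $\kappa\int_0^t|\nabla\theta^\kappa|_{L^2}^2\,\rmd s=\tfrac12(|\theta_0|_{L^2}^2-|\theta^\kappa(t)|_{L^2}^2)$. I argue by contradiction: assume $D:=\kappa\int_0^T|\nabla\theta^\kappa|_{L^2}^2\,\rmd t<\tfrac{1}{64C^2}|\theta_0|_{L^2}^2$. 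Then $|\theta^\kappa(t)|_{L^2}^2\ge|\theta_0|_{L^2}^2-2D\ge\tfrac12|\theta_0|_{L^2}^2$ for every $t$ (using $C\ge1$), so the viscous scalar keeps most of its energy throughout $[0,T]$.

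The second step converts retained energy into a frequency statement through the interpolation inequality $|f|_{L^2}^2\le|f|_{H^{-1}}\,|f|_{\dot{H}^1}$. Applied to $\theta^\kappa$ it gives $|\nabla\theta^\kappa(t)|_{L^2}\ge|\theta^\kappa(t)|_{L^2}^2/|\theta^\kappa(t)|_{H^{-1}}\ge(|\theta_0|_{L^2}^2/2)/|\theta^\kappa(t)|_{H^{-1}}$. Integrating and using the contradiction hypothesis yields the finite budget
\[
  \int_0^T\frac{\rmd t}{|\theta^\kappa(t)|_{H^{-1}}^2}\le\frac{4}{|\theta_0|_{L^2}^4}\int_0^T|\nabla\theta^\kappa|_{L^2}^2\,\rmd t=\frac{4D}{\kappa|\theta_0|_{L^2}^4}<\frac{1}{16\,C^2\,\kappa\,|\theta_0|_{L^2}^2}.
\]
The task is now to contradict this by showing the left-hand side is in fact large.

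The largeness comes from the inviscid solution. Hypothesis \eqref{e:crit2} rearranges to $|\theta(t)|_{H^{-1}}^{-2}\ge|\nabla\theta(t)|_{L^2}^2/(C^2|\theta_0|_{L^2}^4)$, so the divergence of $\int_0^T|\nabla\theta|_{L^2}^2$ forces $\int_0^t|\theta(s)|_{H^{-1}}^{-2}\,\rmd s\to+\infty$ as $t\to T$. To pass this divergence to $\theta^\kappa$ I compare the two solutions in $H^{-1}$: writing $w=\theta^\kappa-\theta$, testing $\partial_t w+u\cdot\nabla w=\kappa\Delta\theta^\kappa$ against $(-\Delta)^{-1}w$, bounding the transport term by the commutator estimate $|\langle u\cdot\nabla w,(-\Delta)^{-1}w\rangle|\lesssim|\nabla u|_{L^\infty}|w|_{H^{-1}}^2$ (licit since $u\in W^{1,\infty}$ on each $[0,t]$, $t<T$) and the diffusion term by $\kappa|\langle\theta^\kappa,w\rangle_{L^2}|\le\tfrac{\kappa}{4}|\theta_0|_{L^2}^2$, Gr\"onwall gives $|\theta^\kappa(t)-\theta(t)|_{H^{-1}}^2\le\tfrac{\kappa}{2}|\theta_0|_{L^2}^2\,G(t)$ with $G(t)=\int_0^t e^{2\int_s^t C|\nabla u|_{L^\infty}\rmd\tau}\rmd s<\infty$ for $t<T$. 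Hence $|\theta^\kappa(t)|_{H^{-1}}\le|\theta(t)|_{H^{-1}}+|\theta_0|_{L^2}\sqrt{\kappa G(t)/2}$, and on the range of times where the inviscid norm dominates the $\kappa$-dependent error the divergence transfers to $\theta^\kappa$, overrunning the budget above.

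The main obstacle is exactly this last balance in the joint limit $\kappa\to0$, $t\to T$: because $\int_0^T|\nabla u|_{L^\infty}=\infty$, the comparison error $|\theta_0|_{L^2}\sqrt{\kappa G(t)}$ swells precisely in the regime $t\to T$ where $|\theta(t)|_{H^{-1}}\to0$, so it cannot simply be dropped. I would choose a cutoff $t_\kappa\uparrow T$ (equivalently an enstrophy threshold $I_{t_\kappa}=\int_0^{t_\kappa}|\nabla\theta|_{L^2}^2\sim|\theta_0|_{L^2}^2/\kappa$) separating the genuine spectral cascade of $\theta$ from the diffusive floor, and lower bound $\int_0^{t_\kappa}|\theta^\kappa|_{H^{-1}}^{-2}$ by a fixed multiple of $|A_\kappa|/\big(\kappa|\theta_0|_{L^2}^2 G(t_\kappa)\big)$, where $A_\kappa=\{t\le t_\kappa:|\theta(t)|_{H^{-1}}\lesssim|\theta_0|_{L^2}\sqrt{\kappa G(t_\kappa)}\}$. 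The delicate point is to bound $|A_\kappa|$ from below, i.e.\ to rule out that the inviscid enstrophy concentrates on a vanishingly thin set of times; this is where the controlled temporal growth of the enstrophy, $\tfrac{d}{dt}|\theta(t)|_{\dot{H}^1}^2\le 2|\nabla u(t)|_{L^\infty}|\theta(t)|_{\dot{H}^1}^2$, should enter, forbidding arbitrarily narrow spikes of $|\nabla\theta|_{L^2}$ and tying $|A_\kappa|$ to $G(t_\kappa)$ so that the contradiction closes with the asserted constant $\tfrac{1}{64C^2}$.
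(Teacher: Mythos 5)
There is a genuine gap, and it is precisely the one you flag yourself at the end. Your reduction asks for the divergence of $\int_0^T |\theta^\kappa(t)|_{H^{-1}}^{-2}\,\rmd t$ at rate $\gtrsim (C^2\kappa|\theta_0|_{L^2}^2)^{-1}$, i.e.\ for $|\theta^\kappa(t)|_{H^{-1}}$ itself to become small on a quantitatively large set of times. But the only comparison between $\theta$ and $\theta^\kappa$ that the hypotheses afford cheaply is in $L^2$ (via the energy identity), and $L^2$-closeness only yields $|\theta^\kappa|_{H^{-1}}\le|\theta|_{H^{-1}}+|\theta-\theta^\kappa|_{L^2}$, where the error term is an $O(1)$ constant (of order $\chi^{1/4}|\theta_0|_{L^2}$) that does \emph{not} vanish as $t\to T$; this additive constant destroys the divergence of $\int|\theta^\kappa|_{H^{-1}}^{-2}$ entirely. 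Your attempted repair — a Gr\"onwall comparison in $H^{-1}$ with factor $G(t)=\int_0^t e^{2C\int_s^t|\nabla u|_{L^\infty}}\rmd s$ — cannot be closed from the stated hypotheses: Proposition~\ref{thm:criterion2} assumes nothing about $|\nabla u|_{L^\infty}$ beyond $u\in L^\infty_{\mathrm{loc}}([0,T);W^{1,\infty})$ (that control appears only in Proposition~\ref{thm:criterion3}), so there is no relation whatsoever between $G(t)$, the inviscid enstrophy $\int_0^t|\nabla\theta|_{L^2}^2$, and $|\theta(t)|_{H^{-1}}$; consequently neither the size of your exceptional set $A_\kappa$ nor the balance $\kappa G(t_\kappa)$ versus $|\theta(t_\kappa)|_{H^{-1}}^2$ can be pinned down, and no specific constant such as $\tfrac{1}{64C^2}$ can emerge.

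The paper's proof sidesteps this by never trying to make $|\theta^\kappa|_{H^{-1}}$ small. Normalizing $|\theta_0|_{L^2}=1$ and arguing by contradiction with $\chi=2^{-6}C^{-2}$, it introduces the stopping time $T_k<T$ at which $\kappa_k\int_0^{T_k}|\nabla\theta|_{L^2}^2\,\rmd s=1$ (available because the inviscid enstrophy integral diverges), deduces $\sup_{t\le T_k}|\theta-\theta^{\kappa_k}|_{L^2}\le\sqrt2\,\chi^{1/4}\le\tfrac12$ from the energy identity and Cauchy--Schwarz, and then converts the mixing hypothesis into a statement that \emph{is} stable under an $L^2$ perturbation of size $\tfrac12$: with $N^2=|\theta(t)|_{\dot H^1}^2/(2C)^2$, one has $|\mathbf{P}_{>N}\theta|_{L^2}\ge 1-N^2|\theta|_{H^{-1}}^2\ge\tfrac34$, hence $|\mathbf{P}_{>N}\theta^\kappa|_{L^2}\ge\tfrac14$ and $|\nabla\theta^\kappa|_{L^2}^2\ge N^2/16=|\theta(t)|_{\dot H^1}^2/(16C)^2\cdot 16$, wait more precisely $\ge\lambda(t)/(16C^2)$. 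Integrating up to $T_k$ gives $\kappa_k\int_0^{T_k}|\nabla\theta^{\kappa_k}|_{L^2}^2\ge(16C^2)^{-1}=4\chi$, the desired contradiction. The lesson is that the high-frequency $L^2$ mass of $\theta$, not the $H^{-1}$ norm of $\theta^\kappa$, is the quantity that transfers across the $O(1)$ $L^2$ gap between the two solutions; if you reorganize your argument around $\mathbf{P}_{>N}$ rather than around $\int|\theta^\kappa|_{H^{-1}}^{-2}$, the Gr\"onwall step and the set $A_\kappa$ become unnecessary.
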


Recall $\abs{\theta(t)}_{H^{-1}}$ is a measure of the scale to which~$\theta(t)$ is mixed, a notion that will be revisited in the next section (see also~\cites{Thiffeault12} for a review).
Note that interpolation and the fact that $\abs{\theta(t)}_{L^2}$ is conserved guarantees that $\norm{\theta(t)}_{H^{-1}} \geq \abs{\theta_0}_{L^2}^2 / \abs{\theta(t)}_{\dot H^1}$.
Thus the assumption~\eqref{e:crit2} essentially requires $\theta(t)$ to become mixed at a comparable rate.

The proof of Proposition~\ref{thm:criterion2} is short and elementary, and is mainly stated here as it establishes a concrete link between mixing and anomalous dissipation.
It is, however, hard to produce examples of mixing, especially at nearly optimal rates.
In fact, as we will see in the proof, particles advected by velocity field used in Theorems~\ref{mainthm} and~\ref{mainthm2} only travel a finite distance in time~$T$.
Thus, these velocity fields are not even mixing, let alone mixing at the nearly optimal rate required in Proposition~\ref{thm:criterion2}.

Finally, we conclude by stating a criterion for anomalous dissipation that only requires growth of positive norms of~$\theta$, a criterion that is weaker than mixing.
\begin{proposition}\label{thm:criterion3}
  If
  \begin{equation}\label{e:crit3}
    |\theta(t)|_{\dot{H}^{2}}
      \le \frac{C |\theta(t)|_{\dot{H}^1}^2}{\abs{\theta_0}_{L^2}} \,,
    \qquad 
    \frac{1}{C(T-t)}
      \leq \frac{\abs{\theta(t)}_{\dot H^1}}{|\theta_0|_{L^2}} \,,
    \qquad \text{and} \qquad
   |\nabla u(t) |_{L^\infty} \leq  \frac{C}{(T-t)} \,,
  \end{equation}
  for all $t \in [0, T)$, and some constant $C\geq 1$, independent of $\kappa$ and $t$,  then there exists a $\chi>0$ depending only on $C$ 
  \begin{equation*}
   \kappa \int_0^T |\nabla \theta^\kappa|^2_{L^2}\rmd t>\chi|\theta_0|_{L^2}^2.
  \end{equation*}
\end{proposition}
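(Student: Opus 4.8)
The plan is to compare the diffusive solution $\theta^\kappa$ with the inviscid solution $\theta$ carrying the same data, as in Proposition~\ref{thm:criterion1}, but to organize everything around the difference $w \defeq \theta^\kappa - \theta$ so as never to require a pointwise bound on $|\theta^\kappa|_{\dot{H}^2}$ (which a merely $W^{1,\infty}$ velocity cannot supply). The underlying mechanism is that the first bound in~\eqref{e:crit3} is a \emph{single-scale} hypothesis tying $|\theta|_{\dot{H}^2}$ to $|\theta|_{\dot{H}^1}^2/|\theta_0|_{L^2}$, while the second bound forces $|\theta|_{\dot{H}^1}$ to be non-integrable in time. Together these say that $\theta$ genuinely creates small scales concentrated at one definite scale, and the diffusion must then pay for this.

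First I would record three elementary facts valid for $t<T$ (where $u$ is regular enough that all norms below are finite). Since $u$ is divergence free, testing the equation for $w$ against $w$ and using Young's inequality gives
\[
\tfrac12\tfrac{d}{dt}|w(t)|_{L^2}^2 = -\kappa|\nabla w|_{L^2}^2 - \kappa\langle \nabla\theta,\nabla w\rangle \le \tfrac{\kappa}{4}|\theta|_{\dot{H}^1}^2,
\]
so that $|w(t)|_{L^2}^2 \le \tfrac{\kappa}{2}\int_0^t|\theta(s)|_{\dot{H}^1}^2\rmd s$. Rewriting the same computation as $-\kappa|\nabla\theta^\kappa|_{L^2}^2 + \kappa\langle\nabla\theta^\kappa,\nabla\theta\rangle$ and integrating yields the identity
\[
\kappa\int_0^t|\nabla\theta^\kappa|_{L^2}^2\rmd s = \kappa\int_0^t\langle\nabla\theta^\kappa,\nabla\theta\rangle\rmd s - \tfrac12|w(t)|_{L^2}^2.
\]
Finally, integrating by parts and invoking the single-scale bound in~\eqref{e:crit3},
\[
\langle\nabla\theta^\kappa,\nabla\theta\rangle = \langle\theta^\kappa,-\Delta\theta\rangle \ge |\theta|_{\dot{H}^1}^2 - |\theta|_{\dot{H}^2}\,|w|_{L^2} \ge \Big(1 - \tfrac{C\,|w|_{L^2}}{|\theta_0|_{L^2}}\Big)|\theta|_{\dot{H}^1}^2.
\]

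The key device is a stopping time. Since the second bound in~\eqref{e:crit3} yields $\int_0^T|\theta|_{\dot{H}^1}^2\rmd s = +\infty$, I can fix a small $\eta>0$ and choose $t_\kappa\in(0,T)$ with $\tfrac{\kappa}{2}\int_0^{t_\kappa}|\theta|_{\dot{H}^1}^2\rmd s = \eta^2|\theta_0|_{L^2}^2$; then $|w(t)|_{L^2}\le\eta|\theta_0|_{L^2}$ for all $t\le t_\kappa$, so the last display is bounded below by $(1-C\eta)|\theta|_{\dot{H}^1}^2$ there. Feeding this into the energy identity at $t=t_\kappa$, and using $\kappa\int_0^{t_\kappa}|\theta|_{\dot{H}^1}^2 = 2\eta^2|\theta_0|_{L^2}^2$ together with $|w(t_\kappa)|_{L^2}^2\le\eta^2|\theta_0|_{L^2}^2$, gives $\kappa\int_0^{t_\kappa}|\nabla\theta^\kappa|_{L^2}^2 \ge \big(2(1-C\eta)-\tfrac12\big)\eta^2|\theta_0|_{L^2}^2$. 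Choosing $\eta = 1/(4C)$ makes the bracket equal to $1$, whence $\kappa\int_0^T|\nabla\theta^\kappa|_{L^2}^2 \ge \kappa\int_0^{t_\kappa}|\nabla\theta^\kappa|_{L^2}^2 \ge \tfrac{1}{16C^2}|\theta_0|_{L^2}^2$, which is the claim with $\chi = 1/(16C^2)$.

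I expect the only real obstacle to be the non-integrability of $|\theta|_{\dot{H}^1}^2$: one cannot integrate the lower bound for $\langle\nabla\theta^\kappa,\nabla\theta\rangle$ up to $T$, because that bound degrades once $|w|_{L^2}$ grows, and $|w|_{L^2}$ is controlled only by the divergent quantity $\kappa\int_0^t|\theta|_{\dot{H}^1}^2$. The stopping time $t_\kappa$ is exactly what keeps $|w|_{L^2}$ below the threshold $\eta|\theta_0|_{L^2}$ at which the single-scale hypothesis still guarantees a positive correlation, and it is that hypothesis which converts smallness of $|w|_{L^2}$ into control of the error term $|\theta|_{\dot{H}^2}|w|_{L^2}$. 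The third condition in~\eqref{e:crit3} and the precise $1/(T-t)$ rate in the second are not strictly needed for this direct route, though they are precisely what one would use to instead verify the hypotheses of Proposition~\ref{thm:criterion1} (through a Grönwall bound on $|\theta^\kappa|_{\dot{H}^1}$) and deduce the conclusion from it.
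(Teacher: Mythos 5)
Your argument is correct, and it takes a genuinely different --- and substantially more economical --- route than the paper. You work directly with the exact $L^2$ energy identity for $w=\theta^\kappa-\theta$, namely $\kappa\int_0^t|\nabla\theta^\kappa|_{L^2}^2\rmd s = \kappa\int_0^t\langle\nabla\theta^\kappa,\nabla\theta\rangle\rmd s - \tfrac12|w(t)|_{L^2}^2$, lower-bound the cross term by $\bigl(1-C|w|_{L^2}/|\theta_0|_{L^2}\bigr)|\theta|_{\dot{H}^1}^2$ using only the single-scale hypothesis, and stop at the time $t_\kappa$ where $\tfrac{\kappa}{2}\int_0^{t_\kappa}|\theta|_{\dot{H}^1}^2\rmd s$ reaches $\eta^2|\theta_0|_{L^2}^2$; every step checks (including the arithmetic $2(1-C\eta)-\tfrac12=1$ at $\eta=1/(4C)$), and the existence of $t_\kappa<T$ uses only the divergence of $\int_0^T|\theta|_{\dot{H}^1}^2\rmd s$, which is all you extract from the second hypothesis. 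The paper instead argues by contradiction, localizes to a window $(1-\mu\kappa,1-\lambda\kappa)$ near the singular time, and splits into two cases according to whether $\int|\theta|_{H^2}^2$ is small relative to $\int|\theta^\kappa|_{H^2}^2$ there; the second case runs an $\dot{H}^1$ balance for the difference whose integrating factor $\exp\bigl(2\int|\nabla u|_{L^\infty}\bigr)$ is tamed precisely by the third hypothesis, and both cases exploit the exact rate $|\theta|_{\dot{H}^1}\approx|\theta_0|_{L^2}/(T-t)$. Your route dispenses with the third hypothesis and with the precise rate in the second, yields the explicit constant $\chi=1/(16C^2)$, and never requires any control on $|\theta^\kappa|_{\dot{H}^2}$; it is in effect a high-frequency analogue of the paper's Proposition~\ref{thm:criterion2}, with the $\dot{H}^2$ single-scale bound playing the role there played by the $H^{-1}$ bound, and it proves a strictly stronger statement than the one asserted.
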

 
Again we note that interpolation forces $\abs{\theta(t)}_{\dot H^2} \geq \abs{\theta(t)}_{\dot H^1}^2 / \abs{\theta_0}_{L^2}$.
The first inequality in~\eqref{e:crit3} assumes that $\abs{\theta(t)}_{\dot H^2}$ does not grow any faster.
Moreover, by Gronwall's lemma we immediately see that last assumption in~\eqref{e:crit3} implies that $\abs{\theta(t)}_{\dot H^1}$ can not grow faster than power of $1 /( T-t)$ as $t \to T$.
The second assumption in~\eqref{e:crit3} requires $\abs{\theta(t)}_{\dot H^1}$ to grow at least linearly in $1/(T-t)$.

\subsection{Connections with Enhanced Dissipation and Mixing}

Enhanced dissipation, anomalous dissipation, and mixing are intrinsically related.
Enhanced dissipation is the notion that solutions to~\eqref{visceqn} dissipate energy faster than $e^{- \kappa t}$, the rate at which solutions to the heat equation (with no advection) dissipate energy.
This occurs when the advection sends some fraction of the total energy to high frequencies.
Using certain assumptions on this rate (specifically~\eqref{e:crit2} and~\eqref{e:crit3}), we showed anomalous dissipation in Propositions~\ref{thm:criterion2} and~\ref{thm:criterion3} respectively.

Mixing, on the other hand, requires \emph{all} energy (in the diffusion free case) to be sent to high frequencies.
When~$\kappa$ is small, one still expects energy to be sent to high frequencies, and so mixing implies enhanced dissipation, at least when $u$ is regular (see for instance~\cite{ConstantinKiselevEA08,CotiZelatiDelgadinoEA18,Wei18,FengIyer19}).
The converse, of course, need not be true:  cellular flows enhance dissipation, but are certainly not mixing~\cite{IyerXuEA19}.

In the context of mixing, Bressan \cite{Bressan03} raised an interesting open problem: \emph{is there a lower bound on the mixing rate of a rough velocity field, in the absence of diffusion?}
More precisely, 
\begin{conj}[Bressan~\cite{Bressan03}]\label{c:Bressan}
If $\theta$ is a solution to~\eqref{inveqn} on the torus, then
\begin{equation}\label{e:mixLower}
  |\theta(t)|_{\mathrm{mix}}\ge
    C_1(\theta_0) \exp\Bigl(
      -C_2(\theta_0)
      \int_0^t |\nabla u(\cdot,s)|_{L^1} \rmd s
    \Bigr)
    \,,
\end{equation}
for some constants $C_1$, $C_2$ that depend on the initial data.
\end{conj}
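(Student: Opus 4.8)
The plan is to pass to the Lagrangian description and to bound how efficiently the flow of $u$ can separate nearby particles, following the strategy introduced by Crippa and De~Lellis. Let $X(t,\cdot)$ denote the (measure-preserving) flow of $u$, so that $\partial_t X(t,x) = u(t,X(t,x))$, $X(0,x)=x$, and $\theta(t,\cdot)=\theta_0\circ X(t,\cdot)^{-1}$. Interpreting $\abs{\theta(t)}_{\mathrm{mix}}$ as a negative-Sobolev (or geometric) mixing scale, a configuration that is mixed down to scale $\ell$ forces the flow to have pulled most initially-close pairs of particles far apart; conversely, an upper bound on particle separation yields a lower bound on the mixing scale. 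The object to control is therefore the logarithmic separation functional
\begin{equation*}
  Q_\delta(t) = \iint_{\T^d\times\T^d} \log\paren[\Big]{ 1 + \frac{\abs{X(t,x)-X(t,y)}}{\delta} } \rmd x \rmd y \,,
\end{equation*}
which I would bound from above in terms of $\int_0^t \abs{\nabla u(\cdot,s)}_{L^1}\rmd s$, and then convert into \eqref{e:mixLower} by a Chebyshev argument.

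First I would differentiate $Q_\delta$ in time. Using $\frac{\rmd}{\rmd t}\abs{X(t,x)-X(t,y)} \le \abs{u(t,X(t,x))-u(t,X(t,y))}$ together with the pointwise difference-quotient bound $\abs{u(p)-u(q)} \le C_d\,\abs{p-q}\paren[\big]{ (M\abs{\nabla u})(p) + (M\abs{\nabla u})(q) }$, where $M$ is the Hardy--Littlewood maximal function, the denominator $\delta + \abs{X(t,x)-X(t,y)}$ cancels and one formally obtains $\dot Q_\delta(t) \le C_d \iint \paren[\big]{ (M\abs{\nabla u})(X(t,x)) + (M\abs{\nabla u})(X(t,y)) } \rmd x \rmd y$. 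Because $X(t,\cdot)$ preserves Lebesgue measure, the right-hand side collapses to $2C_d\,\abs{M\abs{\nabla u}(t)}_{L^1}$.

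To close the argument I would integrate in time and seek a bound $\abs{M\abs{\nabla u}}_{L^1}\lesssim\abs{\nabla u}_{L^1}$, which would give $Q_\delta(t)\le C\int_0^t\abs{\nabla u}_{L^1}\rmd s$ uniformly in $\delta$. A mixing scale $\ell$ at time $t$ forces $Q_\delta$, with $\delta$ comparable to $\ell$, to be of order $\log(1/\ell)$, so that $\log(1/\ell)\lesssim \int_0^t\abs{\nabla u}_{L^1}\rmd s$, which is exactly the exponential lower bound \eqref{e:mixLower}.

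The hard part is precisely this last maximal-function step. The Hardy--Littlewood maximal operator is \emph{not} bounded on $L^1$, so $\abs{M\abs{\nabla u}}_{L^1}$ cannot in general be controlled by $\abs{\nabla u}_{L^1}$; only the weak-type $(1,1)$ estimate and the $L\log L$ bound $\abs{Mf}_{L^1}\lesssim 1+\abs{f\log(1+\abs{f})}_{L^1}$ are available. For any $\abs{\nabla u}_{L^p}$ with $p>1$ the maximal function is $L^p$-bounded and the scheme above goes through, which is the known case of the conjecture. At the stated $L^1$ endpoint, however, the logarithmic loss means the same functional only yields a lower bound under the stronger hypothesis $\nabla u\in L^1\log L^1$, or a weaker-than-exponential bound; genuinely reaching the $L^1$ endpoint appears to demand replacing $Q_\delta$ by a functional adapted to the failure of the maximal inequality (for instance an entropy-type quantity in the spirit of Léger), and this is the step where I would expect the real difficulty to lie.
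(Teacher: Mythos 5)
The statement you were asked to prove is labeled a \emph{conjecture} in the paper, and the paper offers no proof of it: it explicitly states that ``when $u \in L^1_t W^{1,1}_x$, as stated in Conjecture~\ref{c:Bressan}, the optimal lower bound on the mixing rate is not known.'' So there is no paper proof to compare against, and a complete proof is not expected of you. What you have written is an accurate account of the state of the art rather than a proof: your functional $Q_\delta$ and the difference-quotient bound via the maximal function are exactly the Crippa--De Lellis machinery that the paper cites as yielding~\eqref{e:mixLower} with $\abs{\nabla u}_{L^1}$ replaced by $\abs{\nabla u}_{L^p}$ for any $p>1$, and you correctly identify that the argument collapses at $p=1$ because the Hardy--Littlewood maximal operator fails to be bounded on $L^1$ (only weak-$(1,1)$ and $L\log L$ survive).

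The one concrete gap in your write-up, then, is the final step you flag yourself: you have no replacement for the estimate $\abs{M\abs{\nabla u}}_{L^1}\lesssim\abs{\nabla u}_{L^1}$, and without it the scheme proves the conjecture only under the strictly stronger hypothesis $\nabla u \in L^1\log L^1$ (or with a weaker-than-exponential rate). That is precisely the open problem; your proposal does not resolve it, and you should not present the $L^1$ endpoint as established. Two smaller points worth tightening if you keep this as a discussion of partial results: (i) the passage from ``mixed to scale $\ell$'' to ``$Q_\delta \gtrsim \log(1/\ell)$'' needs care for the $H^{-1}$ notion of mixing scale used in the paper (the clean implication is for the geometric mixing scale of Bressan; relating the two requires an additional argument), and (ii) for $u$ merely in $L^1_t W^{1,1}_x$ the flow $X$ exists only in the DiPerna--Lions regular Lagrangian sense, so the pointwise ODE manipulation $\frac{\rmd}{\rmd t}\abs{X(t,x)-X(t,y)}\le\abs{u(t,X(t,x))-u(t,X(t,y))}$ must be justified a.e.\ along the flow rather than classically.
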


Here $|\theta|_{\mathrm{mix}}$ is some quantification of the mixing scale of $\theta$.
One common choice is to use multi-scale norms, and set $\abs{\theta}_{\mathrm{mix}} = |\theta-\bar\theta|_{H^{-1}}$ (see~\cite{Thiffeault12} for a comprehensive review).
However, geometric scales, such as those used in~\cite{Bressan03} or~\cite{LunasinLinEA12} may also be used.

A quick application of Gronwall's lemma shows that Conjecture~\ref{c:Bressan} holds if $\abs{\nabla u}_{L^1}$ is replaced by $\abs{\nabla u}_{L^\infty}$.
When $u$ is only  $L^1_t W^{1, p}_t$, solutions to~\eqref{inveqn} need to be interpreted in the renormalized sense~\cite{DiPernaLions89}.
Regularity of these solutions was studied by Crippa and DeLellis~\cite{CrippaDeLellis08,CrippaDeLellis08a}, and their results can be used to prove that~\eqref{e:mixLower} holds if $\abs{\nabla u}_{L^1}$ is replaced by $\abs{\nabla u}_{L^p}$ for any $p > 1$ (see~\cite{IyerKiselevEA14,CrippaDeLellis08a}).
In this case recent results~\cites{AlbertiCrippaEA19,YaoZlatos17,ElgindiZlatos18,BedrossianBlumenthalEA19a} construct explicit examples showing that the lower bound~\eqref{e:mixLower} is indeed attained.
When the velocity field is allowed to be less regular than $L^1_t W^{1,1}_x$,  (for instance if $u \in L^1_t \mathrm{BV}_x$, or even if $u \in L^1_t C^\alpha_x$ with $\alpha < 1$), one can have perfect mixing in finite time.
Indeed, the examples in~\cite{Bressan03,LunasinLinEA12} exhibit situations where $\abs{\theta(t)}_{\mathrm{mix}}$ decreases linearly and hits $0$ in finite time~\cite{AlbertiCrippaEA19}.
However, when $u \in L^1_t W^{1, 1}_x$, as stated in Conjecture~\ref{c:Bressan}, the optimal lower bound on the mixing rate is not known.

In the presence of diffusion, we formulate a version of the above using dissipation enhancement.
First, using Gronwall's lemma and energy methods (see for instance~\cite{Poon96,MilesDoering18}) one can obtain the following double exponential lower bound on the $L^2$ energy\footnote{We remark that it is also unknown whether this double exponential lower bound above is attained for any flow.
In discrete time~\cite{FengIyer19} produce an example where is in fact attained.
In continuous time, however, there are no examples exhibiting the double exponential decay.
Moreover, Miles and Doering~\cite{MilesDoering18} provide numerical evidence and a heuristic argument that the Batchelor scale limits the effectiveness of mixing, suggesting that the $L^2$ energy can only decay exponentially.}:
\begin{equation*}
  |\theta^\kappa|_{L^2}
    \ge
      |\theta_0|_{L^2}
      \exp\Bigg(
	-\frac{\kappa \abs{\grad \theta_0}_{L^2}^2}{\abs{\theta_0}_{L^2}^2}
	\int_0^t 
	\exp\Bigl(
	  C \int_0^s |\nabla u(\cdot, s')|_{L^\infty} \rmd s'
	\Bigr)
	\rmd s\,\Bigg).
\end{equation*}
Here $C$ is an explicit dimensional constant $C$.
If~$u$ is less regular, does $\theta^\kappa$ dissipate at the same rate?
Can it be faster? 
Thus, in the presence of diffusion, we formulate a version of Bressan's conjecture as:
\begin{conj}\label{conj}
If $\theta^\kappa$ is a solution to~\eqref{visceqn} with  $u\in L^1( [0,\infty), W^{1,1}(\mathbb{T}^d))$ and smooth initial data, then there exists a universal rate function $r:=r(\kappa)\to 0$ as $\kappa\to 0$ independent of $u$ such that for all $0\le \kappa\le 1$ 
\begin{equation*}
  |\theta^\kappa|_{L^2}
    \ge
      |\theta_0|_{L^2}
      \exp\Bigg(
	-r(\kappa) C_1(\theta_0)
	\int_0^t 
	\exp\Bigl(
	  C_2 \int_0^s |\nabla u(\cdot, s')|_{L^1} \rmd s'
	\Bigr)
	\rmd s \Bigg).\, \label{quantbnd}
\end{equation*}
Here $C_1 > 0$ is a constant that depends on $\theta_0$, but not~$\kappa$, and $C_2 > 0$ is a universal constant. In particular,
\begin{equation*}
\kappa \int_0^t  |\nabla \theta^\kappa|_{L^2}^2 \rmd s
    \le
      |\theta_0|_{L^2}^2\left(1-
      \exp\Bigl(
	-2r(\kappa) C_1(\theta_0)
	\int_0^t 
	\exp\Bigl(
	  C_2 \int_0^s |\nabla u(\cdot, s')|_{L^1} \rmd s'
	\Bigr)
	\rmd s \Bigr)\,\right).
\end{equation*}
Thus, for $\kappa\ll1$, we have
\be\label{dissbd}
\kappa \int_0^t  |\nabla \theta^\kappa|_{L^2}^2 \rmd s\lesssim r(\kappa).
\ee
\end{conj}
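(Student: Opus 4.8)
The plan is to split Conjecture~\ref{conj} into its two layers. The substantive assertion is the hypothesized lower bound on $\abs{\theta^\kappa(t)}_{L^2}$, namely that there is a \emph{universal} rate $r(\kappa)\to0$ for which
\[
  \abs{\theta^\kappa(t)}_{L^2}\ge \abs{\theta_0}_{L^2}\exp\paren[\big]{-r(\kappa)\,E(t)},
  \qquad
  E(t):=C_1(\theta_0)\int_0^t\exp\paren[\Big]{C_2\int_0^s\abs{\nabla u(\cdot,s')}_{L^1}\rmd s'}\rmd s .
\]
The two displayed consequences---the ``In particular'' dissipation bound and~\eqref{dissbd}---follow from this lower bound by elementary manipulations, and I would prove exactly these implications while isolating the lower bound itself as the main obstacle.

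For the first consequence I would start from the exact energy balance~\eqref{e:ee} evaluated at time $t$; since $u$ is divergence free this reads
\[
  \kappa\int_0^t\abs{\nabla\theta^\kappa(s)}_{L^2}^2\rmd s
  =\tfrac12\paren[\big]{\abs{\theta_0}_{L^2}^2-\abs{\theta^\kappa(t)}_{L^2}^2}.
\]
Inserting the squared lower bound $\abs{\theta^\kappa(t)}_{L^2}^2\ge\abs{\theta_0}_{L^2}^2\,e^{-2r(\kappa)E(t)}$ converts this identity directly into
\[
  \kappa\int_0^t\abs{\nabla\theta^\kappa}_{L^2}^2\rmd s
  \le\tfrac12\abs{\theta_0}_{L^2}^2\paren[\big]{1-e^{-2r(\kappa)E(t)}},
\]
which is precisely the ``In particular'' estimate. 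For~\eqref{dissbd} I would then apply the elementary inequality $1-e^{-x}\le x$, valid for all $x\ge0$, with $x=2r(\kappa)E(t)$, giving $\kappa\int_0^t\abs{\nabla\theta^\kappa}_{L^2}^2\rmd s\le \abs{\theta_0}_{L^2}^2\,E(t)\,r(\kappa)$. For fixed $T$, $u$ and $\theta_0$ the prefactor $\abs{\theta_0}_{L^2}^2 E(t)$ is a finite constant, so the right-hand side is $\lesssim r(\kappa)$, which is~\eqref{dissbd}. No smallness of $\kappa$ is actually required for these inequalities; the regime $\kappa\ll1$ matters only because that is where $r(\kappa)\to0$ makes the bound informative.

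The genuinely hard part is the conjectured lower bound, and I would not expect a short proof. The natural route is to read the decay of $\abs{\theta^\kappa}_{L^2}$ as limited by the rate at which advection can create small scales, and to control that rate through the regularity of the (renormalized) flow of $u$. The real difficulties are exactly the features that separate this from known results: the estimate must be driven by $\abs{\nabla u}_{L^1}$ rather than $\abs{\nabla u}_{L^p}$ with $p>1$, so the Crippa--DeLellis quantitative flow estimates do not apply directly, and, crucially, the rate $r(\kappa)$ must be \emph{universal}, i.e.\ independent of $u$. This is the diffusive counterpart of Bressan's Conjecture~\ref{c:Bressan}. Furthermore, Theorem~\ref{mainthm} shows that just below the $W^{1,1}$ threshold, in the class $L^1_tC^\alpha_x$ with $\alpha<1$, such a universal lower bound must fail outright---there one even has anomalous dissipation---so any proof is forced to exploit the full strength of the $W^{1,1}$ hypothesis.
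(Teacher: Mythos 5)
This statement is a conjecture, and the paper offers no proof of it; your reading is the correct one. Your derivation of the two displayed consequences from the hypothesized lower bound (via the energy identity~\eqref{e:ee} and $1-e^{-x}\le x$) is exactly the elementary argument the ``In particular'' signals, and you rightly identify the universal lower bound itself as the open content --- note the paper records that the version with $r(\kappa)=\kappa$ is false and that $r(\kappa)=\ln(\kappa)^{-1}$ is the currently conjectured rate.
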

In an earlier version of our paper, we stated Conjecture \ref{conj} with $r(\kappa)=\kappa$. This version of the conjecture was false, as privately communicated to us by Bru\'{e} and Nguyen (private communication).  In work in preparation  \cite{BN}, it is conjectured that \eqref{quantbnd} hold with $r(\kappa)=\ln(\kappa)^{-1}$ and provide some partial results towards this.

We remark that it is also not known whether this conjecture holds with $|\nabla u|_{L^p}$ for any $p \in (1, \infty)$.
Since (morally) enhanced dissipation only requires growth of the $|\theta^\kappa|_{H^1}$ and not actual mixing, this problem appears harder than Bressan's conjecture~\cite{Bressan03}. 
The difficulty is that the $H^1$ norm of the inviscid solution may become infinite immediately~\cite{AlbertiCrippaEA19a} even when $u\in L^\infty_t W^{1,p}$ when $p<\infty$.

Note that main theorem says that one cannot hope to have any such lower bound if we just assume that $u\in L^1_{\mathrm{loc}}([0, \infty); C^\alpha(\T^2))$ if $\alpha<1$.
We further remark that, while the natural place to look for velocity fields breaking this lower bound would be to use rough velocity fields that mix in finite time, it is not easy to rigorously show that mixing implies enhanced dissipation in low regularity settings (see, for example, Theorem 4.4 from \cite{CotiZelatiDelgadinoEA18}).

\subsection{Notation Convention and Plan of this Paper}
For simplicity of presentation, we present the proofs of the main theorem two spatial dimensions, as the generalizing to higher dimension is straightforward.
Without loss of generality, we will also set $T=1$ and subsequently assume that the initial data~$\theta_0$ is always mean zero:
 \[\int_{\mathbb{T}^d} \theta_0(x) \rmd x =0.\]

We use the convention
\begin{equation*}
  |\theta |_{L^2} \defeq \left( \int_{\mathbb{T}^2} |\theta |^2 \rmd x \right)^{{1}/{2}} \,,
  \qquad
  |\theta |_{\dot{H}^1}^2 \defeq
    \sum_{i=1}^2 \abs{\partial_i \theta}_{L^2}^2\,,
  \qquad\text{and}\qquad
  |\theta |_{\dot{H}^2}^2
    \defeq
    \sum_{i,j=1}^2 \abs{\partial_i \partial_j \theta}_{L^2}^2\,,
\end{equation*}
for any function $\theta \colon \T^2 \to \R$.
We will also use
  $|\theta|_{\dot{W}^{1,\infty}}
      \defeq
      \max\{ |\rd_1\theta|_{L^\infty} , |\rd_2\theta|_{L^\infty} \}
  $.
With these conventions, 
\begin{equation*}
  |\theta |_{\dot{H}^1}^2 \le |\theta |_{L^2}|\theta |_{\dot{H}^2} \,,
\end{equation*}
and
\begin{equation*}
  |\theta|_{H^2}^2 = |\theta|_{\dot{H}^2}^2+|\theta|_{\dot{H}^1}^2+|\theta|_{L^2}^2 \,,
  \quad |\theta|_{H^1}^2 =  |\theta|_{\dot{H}^1}^2+|\theta|_{L^2}^2\,,
  \quad |\theta|_{W^{1,\infty}}
    = \max\{ |\theta|_{L^\infty} , |\theta|_{\dot{W}^{1,\infty}} \}
  \,. 
\end{equation*}

To compare quantities that depend on time, we will use $A(t) \approx B(t)$ to mean that $A(t) / B(t)$ is bounded above and below by absolute positive constants.
We will also use $A\ll B$ to mean that $\lim_{t\to 1} A/B=0$.
\medskip

The plan of the paper is as follows.  In \S\ref{sec:inv} we prove the criterion for anomalous dissipation (Propositions~\ref{thm:criterion1}--\ref{thm:criterion3}).
In \S\ref{sec:ex}, we construct the velocity field used in Theorem~\ref{mainthm}, and prove Theorems~\ref{mainthm}--\ref{mainthm2}.
In \S\ref{s:nonuniq}, we use Theorem~\ref{mainthm2} to prove non-uniqueness of weak solutions to the transport equation.
Finally, in \S\ref{disc}, we discuss the connection of our construction to the sharpness of the Obukhov-Corrsin scaling theory of passive scalar turbulence and conclude with an open question.

\section{Criteria for Anomalous Dissipation}\label{sec:inv}

In this section we prove Propositions~\ref{thm:criterion1}--\ref{thm:criterion3}.
The first result will also be used in the proof of Theorem~\ref{mainthm}.

\subsection{Inviscid Growth Criterion with an Assumption on \texorpdfstring{$|\theta^\kappa|_{H^2}$}{the H2 Norm}}

\begin{proof}[Proof of Proposition \ref{thm:criterion1}]
  For simplicity, and without loss of generality, we assume $|\theta_0|_{L^2}$ = 1.
  Since
\[\frac{1}{2} \partial_t |\theta -\theta^\kappa|_{L^2}^2=\kappa\int\Delta \theta^\kappa (\theta^\kappa-\theta)\rmd x= -\kappa | \theta^\kappa|_{\dot{H}^1}^2 + \kappa\int\nabla \theta^\kappa\cdot \nabla \theta\rmd x.\]
Thus, upon integration and using the Cauchy-Schwarz inequality we have
\begin{equation*}
  \frac{1}{2} |\theta -\theta^\kappa|_{L^2}^2(t)
    \le \paren[\Big]{\kappa\int_0^1|\theta^\kappa|_{\dot{H}^1}^2\rmd s}^{1/2}
      \paren[\Big]{\kappa\int_0^t|\theta |_{\dot{H}^1}^2\rmd s}^{1/2} \,,
\end{equation*}
for all $t<1$. 
Assume toward a contradiction that there exists a sequence $\kappa_k\rightarrow 0$ so that 
\begin{equation}\label{contdelt}
\delta_k \defeq \kappa_k\int_0^1|\theta ^{\kappa_k}|_{\dot{H}^1}^2\rmd t  < \chi   \qquad \text{as}\qquad k\to \infty
\end{equation}
where $\chi \defeq (c/2)^4.$ 
Let $T_k<1$ be such that 
\begin{equation}\label{Tkdef}
\kappa_k\int_0^{T_k}|\theta |_{\dot{H}^1}^2\rmd t=1.
\end{equation}
Note that $T_k\to 1$ as $k\to\infty$.
We have that \[\sup_{t\le T_k}|\theta -\theta^{\kappa_k}|_{L^2}\le \sqrt{2}{\delta_k}^{1/4}\le \sqrt{2}{\chi}^{1/4}.\]
On the other hand, we have by interpolation and our hypothesis \eqref{assumProp12} that for $t\le T_k$, 
\[|\theta -\theta^\kappa|_{\dot{H}^1}^2\le |\theta -\theta^\kappa|_{L^2}|\theta -\theta^\kappa|_{\dot{H}^2}\le   \frac{\sqrt{2}{\chi}^{1/4}}{c}|\theta |_{\dot{H}^1}^2\]
and so  $|\theta -\theta^\kappa|_{\dot{H}^1}\leq\frac{2^{1/4}{\chi}^{1/8}}{\sqrt{c}}|\theta |_{\dot{H}^1}$. By the reverse triangle inequality we have
\begin{equation}
|\theta^\kappa|_{\dot{H}^1} \geq \Big||\theta|_{\dot{H}^1}- |\theta^\kappa- \theta|_{\dot{H}^1}\Big| \geq \left(1-\frac{2^{1/4}{\chi}^{1/8}}{\sqrt{c}}\right)|\theta |_{\dot{H}^1}
\end{equation}
Thus we have
\begin{equation}\label{conteqn}
\kappa_k\int_0^{T_k}|\theta^\kappa|_{\dot{H}^1}^2\rmd t\ge  \kappa_k\left(1-\frac{2^{1/4}{\chi}^{1/8}}{\sqrt{c}}\right)^2 \int_0^{T_k}|\theta|_{\dot{H}^1}^2\rmd t =\left(1-\frac{2^{1/4}{\chi}^{1/8}}{\sqrt{c}}\right)^2.
\end{equation}
Thus, so long as $\chi$ satisfies the following inequality
\begin{equation}\label{chiinq}
\chi \leq \left(1-\frac{2^{1/4}{\chi}^{1/8}}{\sqrt{c}}\right)^2
\end{equation}
the right-hand-side of \eqref{conteqn} exceeds $\chi$ contradicting \eqref{contdelt}.   Since $\chi=(c/2)^4$ we see that \eqref{chiinq} becomes
$
\left(\frac{c}{4}\right)^2 \leq 1-\left(\frac{1}{4}\right)^{1/4}.
$
Since $c<1$ and $1/16< 1-\left({1}/{2}\right)^{1/4}$, \eqref{chiinq} is satisfied thereby concluding the proof.
\end{proof}

\subsection{Inviscid Mixing Criterion}

\begin{proof}[Proof of Proposition \ref{thm:criterion2}]
  For simplicity, and without loss of generality, we again assume $\abs{\theta_0}_{L^2} = 1$. 
  Following the proof of Proposition \ref{thm:criterion1}, we again assume \eqref{contdelt} but now with $\chi \defeq   \frac{1}{2^6 C^2}$. 
  Defining $T_k$ as in \eqref{Tkdef}, we find again
   \[\sup_{t\le T_k}|\theta -\theta^{\kappa_k}|_{L^2}\le \sqrt{2}{\chi}^{1/4}=  \frac{1}{2}{C}^{-1/2}\leq 1/2\]
  since  the constant in~\eqref{e:crit2} satisfies $C\geq 1$.
  Now given $N\in\mathbb{N}$, let $\mathbf{P}_{>N}$ be the projection onto Fourier frequencies higher than $N$.
  Recalling  $|\theta|_{L^2}=1$, we see
  \begin{align*}
    |\mathbf{P}_{>N}\theta^\kappa|_{L^2}
      &\ge |\mathbf{P}_{>N}\theta|_{L^2}-|\mathbf{P}_{>N}(\theta-\theta^\kappa)|_{L^2}
      \ge
	|\mathbf{P}_{>N}\theta|_{L^2}
	- \frac{1}{2}
    \\
      &\ge 1-N^2|\theta|_{H^{-1}}^2-\frac{1}{2}
      \geq \frac{1}{2} 
	-\frac{C^2N^2}{\abs{\theta(t)}_{\dot H^1}^2} \,,
  \end{align*}
where $C$ is the constant in~\eqref{e:crit2}.
Letting $\lambda(t) \defeq \abs{\theta(t)}_{\dot H^1}^2$ and $N^2\defeq {\lambda(t)}/{(2C)^2}$ we see
\begin{equation*}
  |\mathbf{P}_{>N}\theta^\kappa|_{L^2} \ge\frac{1}{4}  \qquad \text{which implies} \qquad |\nabla\theta^\kappa|^2_{L^2}\ge \frac{\lambda(t)}{16C^2}.
\end{equation*}
Consequently, \[
\kappa\int_0^{T_\kappa}|\nabla\theta^\kappa|_{L^2}^2\rmd t\ge \frac{1}{16C^2}=4\chi\,,\] 
contradicting the assumption \eqref{contdelt} and concluding the proof.
  \end{proof}
  
\subsection{Inviscid Growth Criterion with Bounds on \texorpdfstring{$\nabla u$}{Gradient u}}  

\begin{proof}[Proof of Proposition \ref{thm:criterion3}]
We assume $|\theta_0|_{L^2}^2=1$ and $T=1$ and define $\lambda(t)=1/(1-t)$.  Define 
\begin{equation}
  \chi^\kappa(t,\kappa) \defeq \kappa \int_0^{t} |\theta^\kappa(t)|_{\dot{H}^1}^2\rmd t, \qquad \chi^0(t,\kappa) \defeq \kappa \int_0^{t} |\theta(t)|_{\dot{H}^1}^2\rmd t.
\end{equation}
For the sake of contradiction, suppose that
\begin{equation} \label{contass}
\chi^\kappa(1,\kappa)\to 0\qquad\text{as} \qquad  \kappa\to 0.
\end{equation}
Note that we have the following lower bound on the dissipation for all $t\in [0,1]$
\begin{equation}\label{chibndbl}
\chi^\kappa(1,\kappa)\ge \chi^\kappa(t,\kappa) \ge \frac{1}{2}\chi^0(t,\kappa) -\kappa \int_0^t|\theta^\kappa(s)-\theta(s)|_{\dot{H}^1}^2\rmd s.
\end{equation}
For any sequence $\kappa_i\to 0$ we have
\begin{equation}
 \chi^0(1-\kappa_i,\kappa_i)\approx\kappa_i \int_0^{1-\kappa_i}\lambda(s)\rmd s=\frac{1}{2} .
\end{equation}
Now fix $i$ and denote $\kappa=\kappa_i$. We consider now two separate cases
\begin{enumerate}
\item there exists $\lambda \in (0.5, 1.5)$ and $\mu \in (2,3)$ such that 
\begin{equation}
\int_{1-\mu \kappa}^{1-\lambda \kappa} |\theta |_{H^2}^2 \rmd s > \chi^\kappa(1,\kappa)^{1/100} \int_{1-\mu \kappa}^{1-\lambda \kappa} |\theta^\kappa|_{H^2}^2 \rmd s 
\end{equation}
\item for all  $\lambda \in (0.5, 1.5)$ and $\mu \in (2,3)$ we have
\begin{equation}
\int_{1-\mu \kappa}^{1-\lambda \kappa} |\theta |_{H^2}^2 \rmd s \le  \chi^\kappa(1,\kappa)^{1/100} \int_{1-\mu \kappa}^{1-\lambda \kappa} |\theta^\kappa|_{H^2}^2 \rmd s 
\end{equation}
\end{enumerate}
\noindent\textbf{Case $(1)$:}  we use the equation for the difference,
\begin{equation}\label{diffeq}
\partial_t (\theta^\kappa - \theta)+u \cdot \nabla (\theta^\kappa - \theta) = \kappa \Delta \theta^\kappa, \qquad (\theta^\kappa - \theta)|_{t=0}=0
\end{equation}
to find
\begin{equation}
|\theta^\kappa(t) - \theta(t)|_{L^2} \le 2\sqrt{\chi^\kappa(t,\kappa) \chi^0(t,\kappa)}.
\end{equation}
Then, we have by interpolation
\begin{align*}
|\theta^\kappa(t) - \theta(t)|_{H^1}^2&\le |\theta^\kappa(t) - \theta(t)|_{L^2} |\theta^\kappa(t) - \theta(t)|_{H^2} \\
&\le2  \sqrt{\chi^\kappa(t,\kappa) \chi^0(t,\kappa)}|\theta^\kappa(t) - \theta(t)|_{H^2}\\
&\le 2 \sqrt{\chi^\kappa(t,\kappa) \chi^0(t,\kappa)} \left(|\theta(t)|_{H^2}+ |\theta^\kappa(t)|_{H^2}\right).
\end{align*}
Recalling that $ \chi^0(1-\lambda\kappa,\kappa)$ bounded independent of $\kappa$, for an appropriate choice of $\lambda,\mu$ we have
\begin{align*}
\int_{1-\mu \kappa}^{1-\lambda \kappa}  |\theta^\kappa(t) - \theta(t)|_{H^1}^2\rmd t &\le2\sqrt{\chi^\kappa(1,\kappa) \chi^0(1-\lambda\kappa,\kappa)} \int_{1-\mu \kappa}^{1-\lambda \kappa}   \left(|\theta(t)|_{H^2}+ |\theta^\kappa(t)|_{H^2}\right)\rmd t\\
&\lesssim\sqrt{\chi^\kappa(1,\kappa) } \kappa^{1/2}  \left(\int_{1-\mu \kappa}^{1-\lambda \kappa}   \left(|\theta(t)|_{H^2}^2+ |\theta^\kappa(t)|_{H^2}^2\right)\rmd t\right)^{1/2}\\
&\lesssim \chi^\kappa(1,\kappa)^{1/2-1/50} \kappa^{1/2} \left(\int_{1-\mu \kappa}^{1-\lambda \kappa}   |\theta(t)|_{H^2}^2\rmd t\right)^{1/2}\\
&\lesssim  \chi^\kappa(1,\kappa)^{1/2-1/50}\kappa^{1/2} \left(\int_{1-\mu \kappa}^{1-\lambda \kappa}   |\theta(t)|_{H^1}^4\rmd t\right)^{1/2}\\
&\approx \chi^\kappa(1,\kappa)^{1/2-1/50} \int_{1-\mu \kappa}^{1-\lambda \kappa}   |\theta(t)|_{H^1}^2\rmd t\\
&\le \frac{1}{4} \int_{1-\mu \kappa}^{1-\lambda \kappa}   |\theta(t)|_{H^1}^2\rmd t \qquad \text{for sufficiently small $\kappa>0$},
\end{align*}
where we have used the properties of the inviscid solution in the second to last two lines.  In particular, $|\theta|_{H^1}\approx \kappa^{-1}$ on that time interval. Thus, we have from \eqref{chibndbl}
\begin{equation}\label{contbd}
\chi^\kappa(1,\kappa)\ge \kappa \int_{1-\mu \kappa}^{1-\lambda \kappa} |\theta^\kappa(s)|_{H^1}^2\rmd s   \ge\frac{1}{2} \kappa \int_{1-\mu \kappa}^{1-\lambda \kappa} |\theta (s)|_{H^1}^2\rmd s\ge c>0
\end{equation}
which gives a contradiction with  \eqref{contass}.
\bigskip

\noindent\textbf{Case $(2)$:} From \eqref{diffeq} we obtain the $\dot{H}^1$ balance
\begin{equation}
\frac{1}{2} \partial_t |\theta^\kappa - \theta|_{\dot{H}^1}^2 \le |\nabla u|_{L^\infty}  |\theta^\kappa - \theta|_{\dot{H}^1}^2 + \frac{\kappa}{2}\left(|\theta |_{\dot{H}^2}^2 - |\theta^\kappa|_{\dot{H}^2}^2\right).
\end{equation}
Integrating the above from $s$ to $t$ and denoting $K(t,s)\defeq \exp\left(2\int_s^t  |\nabla u(r)|_{L^\infty} \rmd r\right)$ we find
\begin{align}\nonumber
|\theta^\kappa(t) - \theta(t)|_{\dot{H}^1}^2 &\le K(t,s) |\theta^\kappa(s) - \theta(s)|_{\dot{H}^1}^2 + \kappa \int_s^t K(t,s') \left(|\theta (s')|_{\dot{H}^2}^2 - |\theta^\kappa(s')|_{\dot{H}^2}^2\right)\rmd s'\\ \label{H1bal}
 &\le K(t,s) |\theta^\kappa(s) - \theta(s)|_{\dot{H}^1}^2 + \kappa K(t,s) \int_s^t |\theta (s')|_{\dot{H}^2}^2\rmd s' - \kappa\int_s^t |\theta^\kappa(s')|_{\dot{H}^2}^2\rmd s'.
\end{align}
We restrict the above to $s\in (1-3\kappa, 1-2\kappa)$ to any $t\in (1-1.5 \kappa, 1- 0.5\kappa)$. 
First we remark that the integrating factor is uniformly bounded on this interval, namely
 \begin{align*}
1\leq K(t,s)&\le \exp\left(2\int_s^t  |\nabla u(r)|_{L^\infty} \rmd r\right) \le  \exp\left(2\int_{1-3\kappa}^{1-0.5 \kappa}(1-r)^{-1} \rmd r\right)  \le \Gamma.
 \end{align*}
 Under the working hypothesis and the assumption \eqref{contass}
\begin{align}\nonumber
|\theta^\kappa(t) - \theta(t)|_{\dot{H}^1}^2 
 &\leq \Gamma |\theta^\kappa(s) - \theta(s)|_{\dot{H}^1}^2 + \kappa  \left(\Gamma  - \chi^\kappa(1,\kappa)^{-1/100} \right)\int_s^t |\theta (s')|_{\dot{H}^2}^2\rmd s'\\
  &\le \Gamma\left( |\theta^\kappa(s)|_{\dot{H}^1}^2+  |\theta (s)|_{H^1}^2\right) -M \kappa \int_s^t |\theta (s')|_{\dot{H}^2}^2\rmd s', \qquad \text{for any}  \qquad M>0 \nonumber
\end{align}
for some sufficiently small $\kappa$.  Now, under the assumption on the inviscid solution, we have
\begin{equation}
|\theta (s)|_{\dot{H}^1}^2\approx (1-s)^{-2}\approx \kappa^{-2},
\end{equation}
\begin{equation}
 \kappa \int_s^t |\theta (s')|_{\dot{H}^2}^2\rmd s'\approx \kappa \int_s^t |\theta (s')|_{\dot{H}^1}^4\rmd s' \approx  \kappa^{-2} ,
\end{equation}
so that we obtain
\begin{align}\nonumber
|\theta^\kappa(t) - \theta(t)|_{\dot{H}^1}^2 
  &\le  \Gamma |\theta^\kappa(s)|_{\dot{H}^1}^2+   \kappa^{-2}(C_1\Gamma -C_2M) \le \Gamma |\theta^\kappa(s)|_{\dot{H}^1}^2 
\end{align}
for two appropriate constants $C_1,C_2>0$. Choosing $\kappa$ sufficiently small so that $M>C_1\Gamma/C_2$ we find upon
integrating the above inequality $s$ from $1-3\kappa$ to $1-2\kappa$ and in $t$ from $1-2\kappa$ to $1-\kappa$ that
\begin{equation}
\kappa\int_{1-2\kappa}^{1-\kappa} |\theta^\kappa(t) - \theta(t)|_{\dot{H}^1}^2\rmd t \le  \Gamma \kappa \int_{0}^{1-\kappa}|\theta^\kappa(s)|_{\dot{H}^1}^2\rmd s.
\end{equation}
Since the right-hand-side above vanishes by assumption, we can choose $\kappa$ sufficiently small such that
\begin{equation}
\kappa \int_{1-2\kappa}^{1-\kappa} |\theta^\kappa(t) - \theta(t)|_{\dot{H}^1}^2 \rmd t\le \frac{1}{4} \chi^0(1-\kappa,\kappa)
\end{equation}
as desired to produce a contradiction by means of \eqref{contbd} as before.
\end{proof}

\section{Construction of the Example}\label{sec:ex}


In this section, we establish Theorems \ref{mainthm} and \ref{mainthm2} by providing an example of velocity field satisfying Proposition \ref{thm:criterion1}.
As mentioned earlier, we will assume for simplicity that $T = 1$ and $d = 2$ in the statements of the theorems.
Moreover, contrary to the regularity stated in~\eqref{e:ureg}, we construct a velocity in the class
\begin{equation*}
  u\in  L^\infty_{\mathrm{loc}}([0,1); W^{2,\infty}(\mathbb{T}^2))\cap L^1([0,1]; C^\alpha(\mathbb{T}^2)) \cap L^\infty([0,1]\times \mathbb{T}^2).
\end{equation*}
 
Our construction is based on the following smoothed-out `sawtooth' function. Given $\frac{\pi}{2} > \epsilon>0$, we define $S_\epsilon:\mathbb{T} = (\mathbb{R}/2\pi\mathbb{Z})\rightarrow \mathbb{R}$ to be odd with respect to $0$, even with respect to {$\frac{\pi}{2}$}, and \[ S_\epsilon(x)=\begin{cases} 
      x & 0\le x\le {\frac{\pi}{2}} -\epsilon \\
      x -\frac{1}{2\epsilon}(x-\frac{\pi}{2}+\epsilon)^2  & \frac{\pi}{2}-\epsilon< x\le \frac{\pi}{2} 
   \end{cases}
\]
Observe that $S_\epsilon\in W^{2,\infty}(\mathbb{T})$ and \[|S_\epsilon'|_{L^\infty}\le 1\qquad |S_\epsilon''|_{L^\infty}\le \frac{1}{\epsilon}.\]
Let us first fix a sequence to time steps  $\{t_j\}_{j\in\mathbb{N}}$, a sequence of regularizations $\eps_j$, and a sequence of frequencies $\{N_j\}_{j\in\mathbb{N}}$. In practice, $t_j$ and $\eps_j$ are going to be chosen to be decreasing and summable while $N_j$ will be chosen to be rapidly increasing. Next, define measure preserving transformations $\{\mathcal{T}_j\}_{j\in\mathbb{N}}$ by \[\mathcal{T}_j(x,y)=\begin{cases} (x+t_jS_{\eps_j}(N_j y),y) & j\,\, \text{odd}\\  (x,y+t_jS_{\eps_j}(N_j x)) & j\,\, \text{even}\end{cases}.\]
Now define \[\mathcal{U}_j\defeq\mathcal{T}_1\circ \mathcal{T}_2\circ \mathcal{T}_3\circ \dots \mathcal{T}_j.\] Set $T_j = \sum_{i=0}^j t_i$ with $t_0 = 0$. 
Note that   $\theta_0 \circ \mathcal{U}_j = \theta(T_j)$ where $\theta(t)$ is the solution of \begin{equation*}
\begin{split}
\partial_t \theta + u\cdot\nabla \theta = 0, \qquad \theta(t=0) = \theta_0, 
\end{split}
\end{equation*} with $u(t)$ for $t \in [T_i,T_{i+1})$ given by \begin{equation}\label{eq:velocity}
\begin{split}
u(t, x,y) = \begin{cases}
\begin{pmatrix}
S_{\eps_i}(N_iy) \\
0
\end{pmatrix}  & i \mbox{ even},  \\
\begin{pmatrix}
0 \\
S_{\eps_i}(N_ix) 
\end{pmatrix} & i \mbox{ odd. }
\end{cases} 
\end{split}
\end{equation} In the following sections, we proceed to check the conditions in Proposition \ref{thm:criterion1}. To treat the case of $\alpha>0$ small, we modify $u(t)$ to be trivial for $t \le T_{j}$ with $j<\frac{1}{\alpha}$ for a technical reason; see Lemma \ref{thm:inviscid-bounds}. 

\begin{figure}[htb]\centering
  \begin{subfigure}[b]{0.47\linewidth}
    \includegraphics[width=.9\columnwidth,  height=.8\columnwidth]{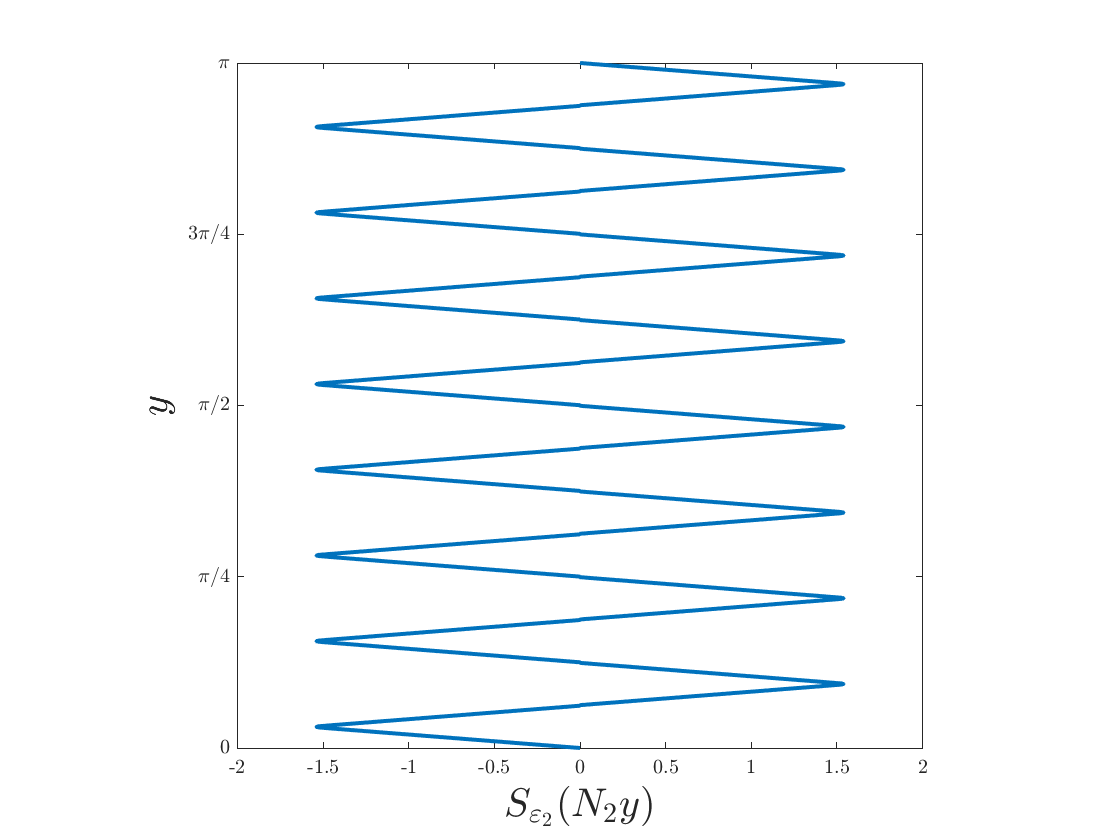} 
  \end{subfigure}
  \begin{subfigure}[b]{0.47\linewidth}
    \includegraphics[width=.9\columnwidth,  height=.8\columnwidth]{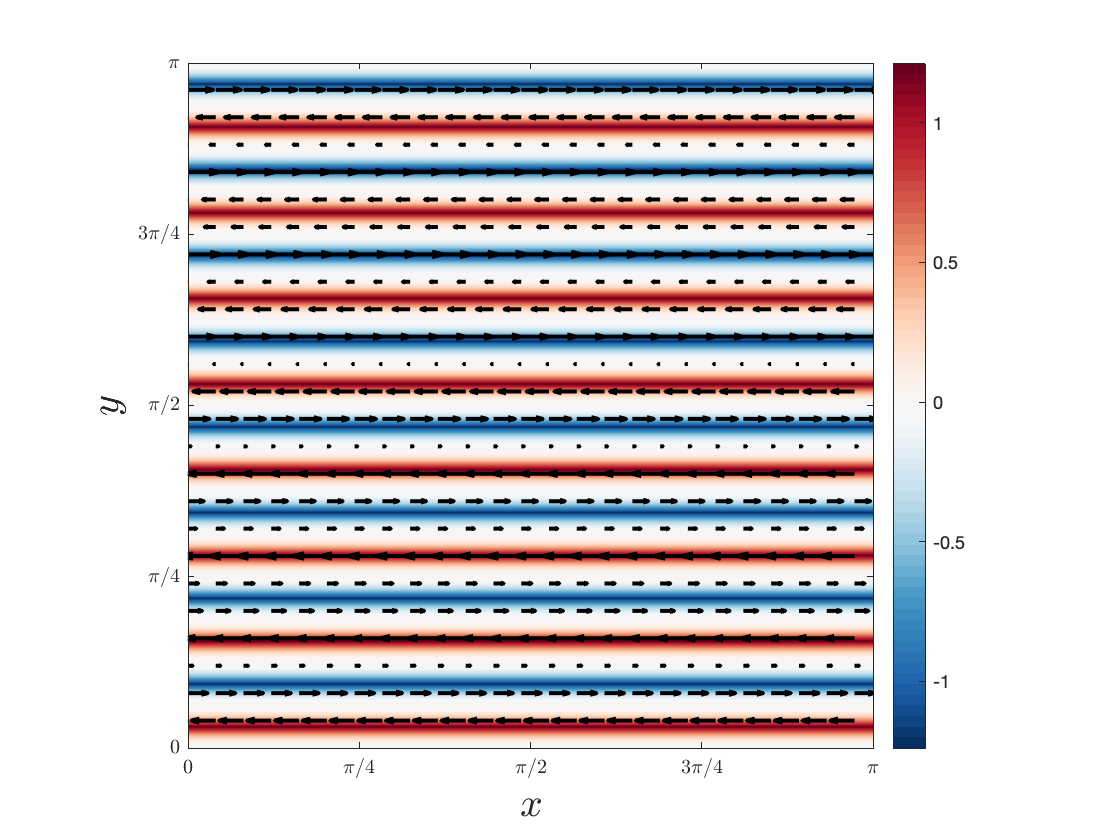} 
  \end{subfigure}
  \\
  \begin{subfigure}[b]{0.47\linewidth}
    \includegraphics[width=.9\columnwidth,  height=.8\columnwidth]{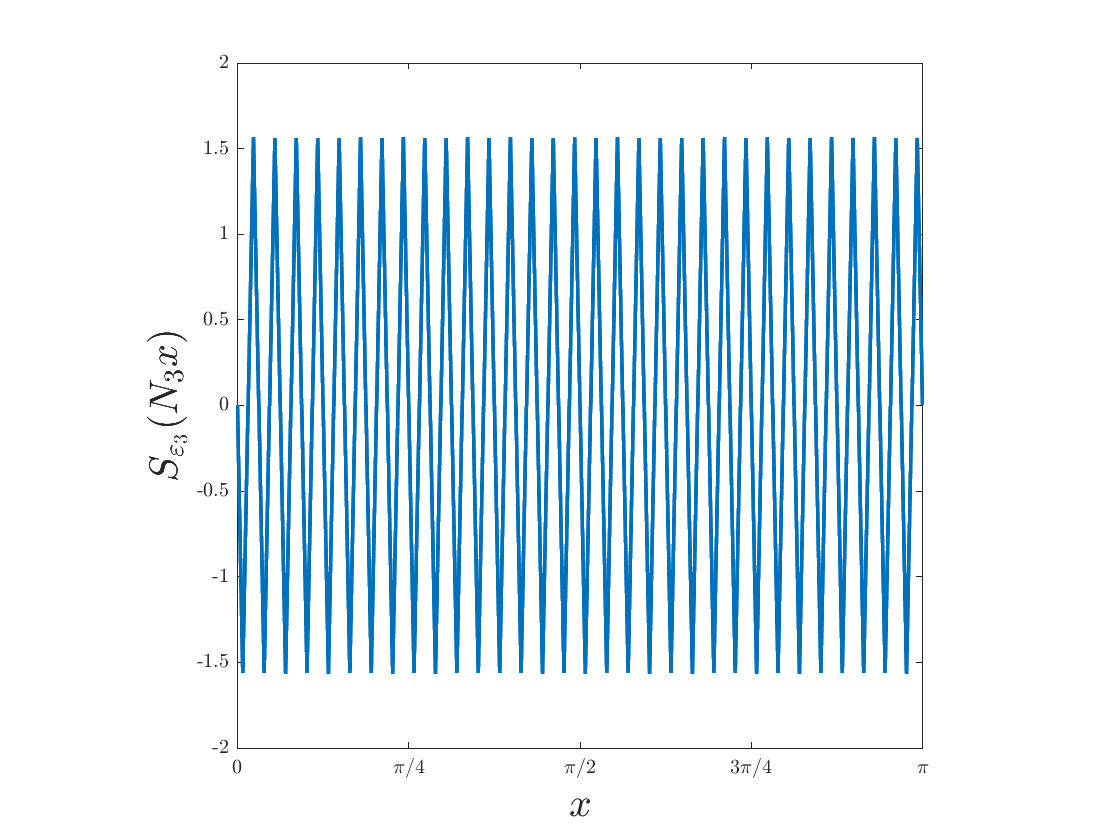} 
  \end{subfigure}
  \begin{subfigure}[b]{0.47\linewidth}
    \includegraphics[width=.9\columnwidth,  height=.8\columnwidth]{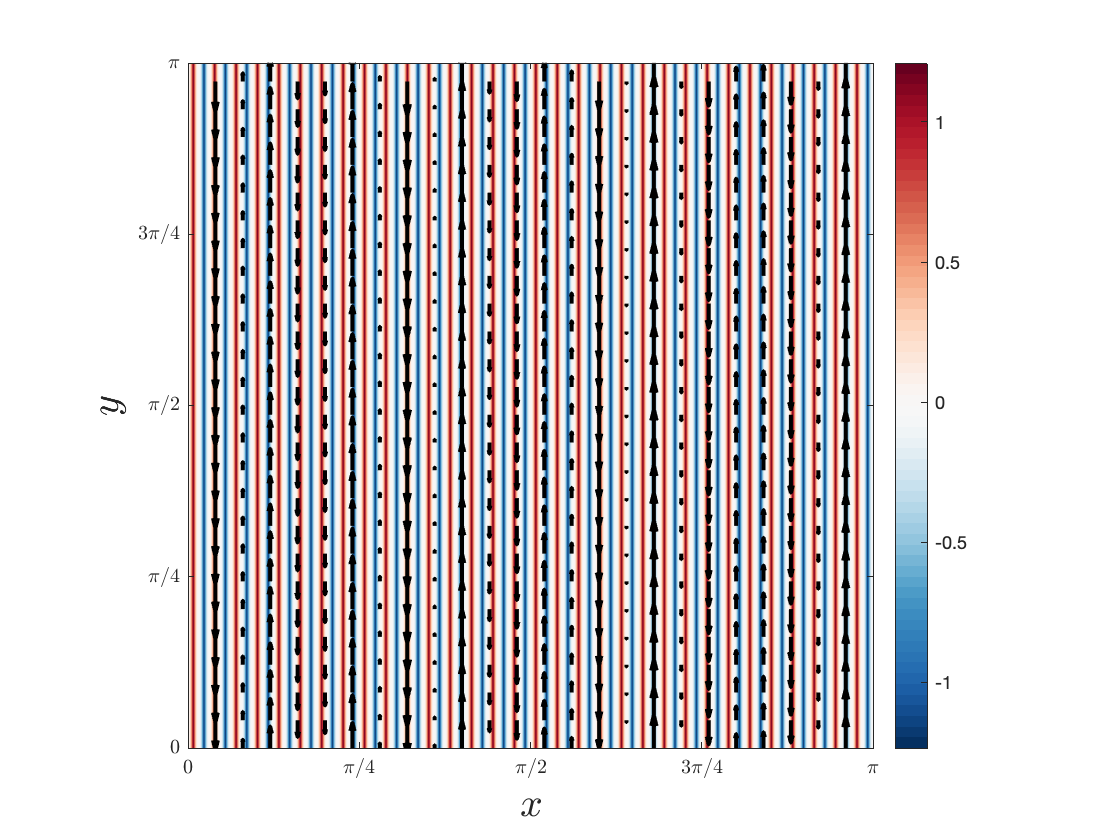} 
  \end{subfigure}\\  
  \caption{Fix $\alpha=1$ and $\epsilon_i$ and $N_i$ as above.
    The two left panels depict shear profiles, and two right panels represent contour plots of the corresponding stream function with velocity vectors superimposed.
    The two top panels correspond to $t_2= 2^{-2}$, and the two Bottom panels correspond to $t_3= 2^{-3}$.
  }
\label{fig1}
\end{figure}

\begin{remark} In the above, we have constructed $u \in L^\infty_\mathrm{loc}( [0, T); W^{1, \infty}(\T^2))$ rather than $u \in C^\infty( [0, 1) \times \T^2)$.
	The modification required to obtain smooth velocities is straightforward; it is accomplish by modifying the shear profile, $S_\ve$  to be $C^\infty$, instead of~$C^2$,  and adding amplitude functions which smoothly turn on and off the shears over each time interval in the construction.
	The window over which the shears are turned on and off are taken small to start with, and can be taken to decrease as time progresses and $N$ increases in the construction.
\end{remark}

\subsection{Inviscid Bounds}

In this section, we prove the following
\begin{lemma}\label{thm:inviscid-bounds} 
For any $\alpha >0$, let \[t_j=2^{-j},\qquad N_j=2^{{(1+\alpha)}j},\qquad \eps_j={\exp\left(-30(1 + \frac{1}{2^\alpha-1})\right)}\cdot 2^{-2j}.\] {Assume that $\theta_0$ is given by one of the following trigonometric functions: \begin{equation*}
	\begin{split}
	\sin(Mx)\sin(Ly),\quad \sin(Mx)\cos(Ly),\quad\cos(Lx)\sin(My),\quad \cos(Lx)\cos(My)
	\end{split}
	\end{equation*} for some integers $M\ge1$ and $L\ge 0$. Define $\theta_j(x,y)=\theta_0\circ \mathcal{U}_j$ where}\begin{equation*}
\begin{split}
\mathcal{U}_j \defeq \mathrm{Id}, \quad j \le \frac{1}{\alpha} 
\end{split}
\end{equation*} and \begin{equation*}
\begin{split}
\mathcal{U}_j \defeq \mathcal{T}_{J(\alpha)} \circ \cdots \circ \mathcal{T}_j, \quad j \ge J(\alpha)\defeq \left\lfloor \frac{1}{\alpha} \right\rfloor + 1 . 
\end{split}
\end{equation*} Here $\left\lfloor \frac{1}{\alpha} \right \rfloor$ denotes the largest integer not exceeding $\frac{1}{\alpha}$.  Then, $\theta_j$ satisfy:
\[|\theta _j|_{H^1}\ge {c_\alpha} 2^{\frac{{\alpha}j(j+1)}{2}}{|\theta_0|_{H^1}} ,\qquad |\theta _j|_{W^{1,\infty}}\le {C_\alpha} |\theta _j|_{H^1}, \qquad |\theta _j|_{H^2}\le {C_\alpha}|\theta _j|_{H^1}^2,\] for some constants $C_\alpha,c_\alpha>0$ independent of $j$. \end{lemma}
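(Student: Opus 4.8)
The plan is to induct on $j$ using the recursion $\theta_j = \theta_{j-1}\circ\mathcal{T}_j$, which holds because $\mathcal{U}_j = \mathcal{U}_{j-1}\circ\mathcal{T}_j$. Each $\mathcal{T}_j$ is a shear, so the chain rule gives, for $j$ odd,
\[
  \partial_x\theta_j = (\partial_x\theta_{j-1})\circ\mathcal{T}_j,
  \qquad
  \partial_y\theta_j = t_jN_j S_{\eps_j}'(N_jy)\,(\partial_x\theta_{j-1})\circ\mathcal{T}_j + (\partial_y\theta_{j-1})\circ\mathcal{T}_j,
\]
with the roles of $x$ and $y$ interchanged for $j$ even. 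The mechanism is then transparent: a shear leaves the parallel derivative unchanged (it is composed with a measure preserving map, hence isometric on each $L^p$) and amplifies the transverse derivative by the factor $t_jN_j = 2^{\alpha j}$. Since the shears alternate direction, the dominant component ratchets between $\partial_x$ and $\partial_y$, being multiplied by $t_jN_j$ at each step. Iterating from $i=J(\alpha)$ to $i=j$ yields the product $\prod_{i=J(\alpha)}^j t_iN_i = 2^{\alpha\sum_{i=J(\alpha)}^j i}$, whose exponent equals $\tfrac{\alpha}{2}j(j+1)$ up to an $\alpha$-dependent constant, which is exactly the claimed rate $c_\alpha 2^{\alpha j(j+1)/2}$.

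For the $H^1$ lower bound the substance is to check that the factor $S_{\eps_j}'(N_jy)$ in the transverse term costs only $1-O(\eps_j)$ rather than killing the amplification. Setting $G(y) \defeq \int_{\T}|\partial_x\theta_{j-1}(x,y)|^2\rmd x$ and using that for fixed $y$ the shear acts as an isometric translation in $x$, the transverse energy equals $\int_{\T}|S_{\eps_j}'(N_jy)|^2 G(y)\rmd y$. As $|S_{\eps_j}'|\le 1$ and $|S_{\eps_j}'| = 1$ off a set of measure $O(\eps_j)$, this is at least $\int G - |G|_{L^\infty}O(\eps_j)$, so the amplification survives as long as $G$ does not concentrate on the narrow teeth of the sawtooth. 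I would carry the needed non-concentration, $|G|_{L^\infty}\lesssim \int G$, as an extra clause of the induction: the preceding shear displaces $y$ by $t_{j-1}S_{\eps_{j-1}}(N_{j-1}x)$, whose range is $O(t_{j-1})=O(2^{-(j-1)})$, so integrating in $x$ averages $G$ over a $y$-window of width $t_{j-1}\gg 1/N_j$ and hence smooths it well below the scale of the teeth. This separation of scales is where I expect the main difficulty to lie.

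The two upper bounds I would obtain along the same induction. For $|\theta_j|_{W^{1,\infty}}\le C_\alpha|\theta_j|_{H^1}$, the bound $|S_{\eps_j}'|\le 1$ forces the transverse derivative to grow by at most $t_jN_j$ in $L^\infty$ as well, i.e.\ the same factor that governs its $L^2$ growth; thus the ratio $|\theta_j|_{W^{1,\infty}}/|\theta_j|_{H^1}$ stays bounded once it is bounded for the trigonometric datum. For $|\theta_j|_{H^2}\le C_\alpha|\theta_j|_{H^1}^2$ I would differentiate once more: the leading second-derivative term $(t_jN_j)^2 (S_{\eps_j}')^2\,\partial_{xx}\theta_{j-1}$ grows by $(t_jN_j)^2$ per step and so reproduces $|\theta_j|_{H^1}^2$ after invoking the inductive bound on $|\theta_{j-1}|_{H^2}$ and the $H^1$ amplification. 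The only dangerous new term is $t_jN_j^2 S_{\eps_j}''(N_jy)\,\partial_x\theta_{j-1}$; since $|S_{\eps_j}''|_{L^2}\approx \eps_j^{-1/2}$, its size is $\approx t_jN_j^2\eps_j^{-1/2}|\theta_{j-1}|_{H^1}$, and the choice $\eps_j\approx 2^{-2j}$ makes this $\approx 2^{2(1+\alpha)j}|\theta_{j-1}|_{H^1}$, which is small compared with the target $|\theta_j|_{H^1}^2\approx 2^{2\alpha j}|\theta_{j-1}|_{H^1}^2$ once $2^{2j}\ll |\theta_{j-1}|_{H^1}$; the finitely many remaining indices $j\ge J(\alpha)$ are absorbed into $C_\alpha$.

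Finally I would record the bookkeeping that keeps the induction uniform in $j$. Beginning the composition at $J(\alpha)=\lfloor 1/\alpha\rfloor+1$ guarantees $t_iN_i=2^{\alpha i}\ge 2$ from the first active shear, so every step is genuinely amplifying and the ratchet never stalls, while the precise constant in $\eps_j$ and the initial offset affect only $c_\alpha$ and $C_\alpha$. To reiterate, the crux is the non-concentration estimate for $G$: controlling the interaction between the teeth of $S_{\eps_j}'$, localized at scale $1/N_j$, and the transverse energy density, which is exactly what the averaging induced by the previous shear is designed to resolve.
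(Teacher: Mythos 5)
Your proposal follows essentially the same route as the paper: induct over the shear steps using the exact chain--rule formulas, observe that each shear preserves the parallel derivative and amplifies the transverse one by $t_jN_j=2^{\alpha j}$, telescope the product to get $2^{\alpha j(j+1)/2}$, control the flat part of the sawtooth by its measure $O(\eps_j)$, and handle $\dot H^2$ through the recursion with the $S_{\eps_j}''$ term tamed by the choice $\eps_j\approx 2^{-2j}$ together with the already-established $\dot H^1$ lower bound. The bookkeeping you describe (tracking the subdominant derivative so the ratchet does not stall, starting at $J(\alpha)$ so that $t_jN_j\ge 2$) matches the paper's bootstrap of the quantities $A_j=|\partial_{i_j}\theta_j|_{L^2}/|\partial_{i_{j+1}}\theta_{j+1}|_{L^2}$ and $R_j=|\theta_j|_{\dot W^{1,\infty}}/|\partial_{i_j}\theta_j|_{L^2}$.

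One point to correct. You identify the crux as a non-concentration estimate $|G|_{L^\infty}\lesssim\int G$ for the marginal energy density, and you propose to justify it by arguing that the previous shear ``averages $G$ over a $y$-window of width $t_{j-1}\gg 1/N_j$.'' That separation-of-scales/averaging mechanism is not what closes the argument, and as stated it is not a rigorous smoothing statement: the displacement is a deterministic function of $x$, and composing with it does not convolve $G$ with a bump of width $t_{j-1}$. The estimate that actually closes is the one you already list as the second conclusion of the lemma: since $|S_{\eps_j}'|\le 1$, the $\dot W^{1,\infty}$ norm and the dominant-direction $L^2$ norm obey the \emph{same} multiplicative recursion up to factors $1+O(2^{-\alpha j})$, so their ratio $R_j$ stays uniformly bounded by an infinite-product argument; your clause $|G|_{L^\infty}\lesssim\int G$ is exactly $R_{j-1}^2\lesssim 1$ and should be bootstrapped jointly with the $L^2$ lower bound, not derived from a smoothing heuristic. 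Relatedly, the base case is not purely a matter of constants: the paper initializes the bootstrap ($A_0<3/5$) by an explicit orthogonality computation for the trigonometric datum, and the case $L>M$ (transverse derivative initially dominant) needs the separate one-step adjustment the paper sketches; you should not wave this into $c_\alpha$, $C_\alpha$ without checking that the first amplifying step actually wins against the initial transverse derivative.
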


\begin{remark}
It is easy to see that the choice of $t_j, N_j, \eps_j$ in the above gives us $|\nabla u (t)|_{L^\infty} \approx \frac{1}{(1-t)^2}$ while $|\theta (t)|_{H^1}^2\approx |\theta (t)|_{H^{{2}}}\gg \frac{1}{(1-t)^2}$ where $u$ is defined as in \eqref{eq:velocity}. 
\end{remark}

\begin{proof} We shall assume for simplicity that $\theta_0 = \sin(Mx)\sin(Ly)$ with $M \ge L$. The proof for other trigonometric functions are almost identical, as long as $M\ge L$. We shall sketch necessary modifications to the proof in the case $L >M$ at the end.
{We now observe that \begin{equation*}
	\begin{split}
	|\theta_0|_{\dot{W}^{1,\infty}} \le 4|\theta_0|_{\dot{H}^1}, \quad |\theta_0|_{\dot{H}^2} \le 4 |\theta_0|_{\dot{H}^1}^2.
	\end{split}
	\end{equation*} Fix} $j\in \mathbb{N}\cup\{0\}$ and let $i_{j}=j-1\,\, \text{mod}\,\, 2$ and $i_{j+1}=j \,\, \text{mod}\,\, 2.$ Assuming for a moment that $j$ is odd,  we compute: 
\begin{align*}
\rd_{2} \theta_{j+1}(x,y) &= \rd_{2}\theta_j(x + t_j S_{\eps_j}(N_jy),y ) + t_jN_j S'_{\eps_j}(N_jy) \rd_{1}\theta_j(x + t_j S_{\eps_j}(N_jy),y ) , \\
\rd_{1} \theta_{j+1}(x,y) &= \rd_{1}\theta_j(x + t_j S_{\eps_j}(N_jy),y ). 
\end{align*}
Moreover, 
\begin{align*}
\rd_{11}\theta_{j+1}(x,y) &= \rd_{11} \theta_j(x + t_j S_{\eps_j}(N_jy),y ), \\
\rd_{12} \theta_{j+1}(x,y) &= \rd_{12}\theta_j(x + t_j S_{\eps_j}(N_jy),y ) + t_jN_j S'_{\eps_j}(N_jy) \rd_{11}\theta_j(x + t_j S_{\eps_j}(N_jy),y ),\\
\rd_{22} \theta_{j+1}(x,y) &= \rd_{22}\theta_j(x + t_j S_{\eps_j}(N_jy),y ) + t_jN_j^2 S_{\eps_j}''(N_j y) \rd_1\theta_j(x + t_j S_{\eps_j}(N_jy),y ) \\
& \quad + (t_jN_j)^2 (S'_{\eps_j}(N_jy))^2 \rd_{11} \theta_j(x + t_j S_{\eps_j}(N_jy),y ). 
\end{align*} 
We have similar formulas when $j$ is odd, exchanging the roles of $x$ and $y$. Now the upper bounds on $\theta_{j+1}$ in $\dot{W}^{1,\infty}$ and $\dot{H}^1$ are easy to get. 
\medskip

\emph{Upper Bounds:}
From the Lipschitz property of the profile $S_\epsilon$, it is easy to see that
\begin{align*}
|\theta _{j+1}|_{\dot{W}^{1,\infty}} &\le t_j N_j |\theta _j|_{\dot{W}^{1,\infty}}+|\theta _j|_{\dot{W}^{1,\infty}}\\
&\le t_j N_j |\theta _j|_{\dot{W}^{1,\infty}}\Big(1+\frac{1}{t_j N_j}\Big).
\end{align*}
Similarly, we have
 \begin{equation*}
\begin{split}
|\theta _{j+1}|_{\dot{H}^{1}}\le t_j N_j |\theta _j|_{\dot{H}^{1}}\Big(1+\frac{1}{t_j N_j}\Big). 
\end{split}
\end{equation*} 
Recalling that  $t_j=2^{-j}$ and $ N_j=2^{(1+\alpha)j}$, 
notice  \[\prod_{j=1}^{\infty} \Big(1+\frac{1}{t_j N_j}\Big)=\prod_{j=1}^\infty \Big(1+2^{-{\alpha}j}\Big)\le {\exp\left(\frac{1}{2^\alpha-1}\right)} \] by taking the $\log$ of the infinite product and using the fact that $\log(1+x)\le x$ for $x\ge 0$. Then we have 
\[|\theta _{j+1}|_{\dot{W}^{1,\infty}}\le  {\exp\left(\frac{1}{2^\alpha-1}\right)} \cdot 2^{\frac{{\alpha}j(j+1)}{2}} {|\theta_0|_{\dot{W}^{1,\infty}}}. \] The same upper bound holds for $|\theta_{j+1}|_{\dot{H}^1}$: \begin{equation*}
\begin{split}
|\theta _{j+1}|_{\dot{H}^{1}}\le  {\exp\left(\frac{1}{2^\alpha-1}\right)} \cdot 2^{\frac{{\alpha}j(j+1)}{2}} |\theta_0|_{\dot{H}^{1}}. 
\end{split}
\end{equation*}
\medskip

\emph{Lower Bounds:}
Note that $|S_{\eps_j}'(z)| = 1$ except for the region $|z-\frac{\pi}{2}|<\eps_j$. We bound the contribution from this region using the $\dot{W}^{1,\infty}$ norm:
\begin{align*}
|\partial_{i_{j+1}} \theta_{j+1}|_{L^2}&\ge t_j N_j |\partial_{i_j}\theta_j|_{L^2}-\sqrt{\eps_j} t_j N_j|\theta _j|_{\dot{W}^{1,\infty}} - |\partial_{i_{j+1}} \theta_j|_{L^2}\\
&=t_j N_j|\partial_{i_j}\theta_j|_{L^2}\Big(1-\sqrt{\eps_j} \frac{|\theta _j|_{\dot{W}^{1,\infty}}}{|\partial_{i_j}\theta_j|_{L^2}}- {\frac{1}{t_jN_j}} \frac{|\partial_{i_{j-1}}\theta_{j}|_{L^2}}{|\partial_{i_j}\theta_j|_{L^2}}\Big).
\end{align*}
 Observe also that \[|\partial_{i_j}\theta_{j+1}|_{L^2}= |\partial_{i_j}\theta_j|_{L^2}.\]
Thus, 
 \[|\partial_{i_{j+1}} \theta_{j+1}|_{L^2}\ge t_j N_j|\partial_{i_j}\theta_j|_{L^2}\Big(1-\sqrt{\eps_j} \frac{|\theta _j|_{\dot{W}^{1,\infty}}}{|\partial_{i_j}\theta_j|_{L^2}}- {\frac{1}{t_jN_j}}\frac{|\partial_{i_{j-1}}\theta_{j-1}|_{L^2}}{|\partial_{i_j}\theta_j|_{L^2}}\Big),\] where $\theta_{-1}\equiv 0$. 
Define \[A_j\defeq \frac{|\partial_{i_{j}} \theta_{j}|_{L^2}}{|\partial_{i_{j+1}}\theta_{j+1}|_{L^2}}, \qquad R_{{j+1}}\defeq\frac{|\theta _{j+1}|_{\dot W^{1,\infty}}}{|\partial_{i_{j+1}}\theta_{j+1}|_{L^2}}.\]
Then, 
\begin{align*}
A_{j}&\le 2^{-{\alpha}j} \Big(1-\sqrt{\eps_j} R_j-2^{-{\alpha}j}A_{j-1}\Big)^{-1}, \\
R_{j+1}&\le R_j\Big(1+2^{-{\alpha}j}\Big)\Big(1- {\sqrt{\eps_j}}  R_j-2^{-{\alpha}j}A_{j-1})^{-1}.
\end{align*}
{Recall that} $\eps_j={\exp(-30(1 + \frac{1}{2^\alpha-1}))}2^{-2j}$ and let us bootstrap the following information: \[A_{j}\le 2\cdot 2^{-{\alpha}j}, \qquad R_j\le {\exp(10(1 + \frac{1}{2^\alpha-1}))}.\]
Then,\begin{equation}\label{eq:ineq}
\begin{split}
 A_j\le 2^{-{\alpha}j}(\frac{9}{10}-2^{-{\alpha}j}A_{j-1})^{-1}. 
\end{split}
\end{equation} 
{To start the bootstrap procedure, let us compute $A_0$ directly: {recalling} that $\theta_0 = \sin(Mx)\sin(Ly)$ and $\alpha = 1$, we have \begin{equation*}
	\begin{split}
	\theta_1(x,y) = \sin( M(x+t_1S_{\eps_1 }(N_1y) ) )\cos(Ly)
	\end{split}
	\end{equation*} and \begin{equation*}
	\begin{split}
	\rd_y \theta_1 = t_1N_1S'_{\eps_1}(N_1y)M\cos( M(x+t_1S_{\eps_1 }(N_1y) ) )\cos(Ly) - L\sin( M(x+t_1S_{\eps_1 }(N_1y) ) )\sin(Ly). 
	\end{split}
	\end{equation*} Observe that two terms in the above expression are orthogonal in $L^2(\bbT^2)$. Therefore, in this case, we have \begin{equation*}
	\begin{split}
	A_0 \le 2^{-1}(1- \sqrt{\eps_0}R_0)^{-1} < \frac{3}{5}. 
	\end{split}
	\end{equation*} Here, we also used the observation that in this case $R_0$ can be replaced by $|\rd_x\theta_0|_{L^\infty}/|\rd_x\theta_0|_{L^2}$, which is uniformly bounded. In the general case $\alpha>0$, one obtains similarly $A_0 = {A_{ J(\alpha)-1} }< \frac{3}{5}$. We omit the proof for the remaining cases of $\theta_0$, which requires only minor modifications.
Now recalling \eqref{eq:ineq} and using $A_{J(\alpha)-1} < \frac{3}{5}$ gives $A_{ {J(\alpha)} }< 1$.} Then we see that for $j\ge {J(\alpha)+1}$ we must have $A_j<2\cdot 2^{-{\alpha}j}.$
Next, let us keep that $R_j<{\exp(10(1 + \frac{1}{2^\alpha-1}))}$. We know that
\[R_{j+1}\le R_j(1+2^{-{\alpha}j})(1-\frac{1}{100}2^{-j}-2^{-{\alpha}j} A_{j-1})^{-1}.\]
On the other hand, we have that $\frac{1}{1-x}<1 {+3x}$ {for $x<2$.}
Now, \[\prod_{j=J(\alpha)}^\infty (1+2^{-{\alpha}j})<  {\exp(\frac{1}{2^\alpha-1})} \] and 
\[\prod_{j=J(\alpha)}^\infty (1-\frac{1}{100}2^{-j}-2^{-{\alpha} j} A_{j-1})^{-1}\le \prod_{j=J(\alpha)}^\infty(1+\frac{3}{100}2^{-j}+3\cdot 2^{-{\alpha}j} A_{j-1})\] since we know that $\frac{1}{100}2^{-j}+2^{-{\alpha}j} A_{j-1}\le \frac{2}{3}$ for all $j \ge J(\alpha)$.
Now, \[\prod_{j=J(\alpha)}^{\infty}(1+\frac{3}{100}2^{-j}+3\cdot 2^{-{\alpha}j} A_{j-1})\le \prod_{j=J(\alpha)}^{\infty}(1+\frac{3}{100}2^{-j}+6\cdot 2^{-2{\alpha}j}) < \exp(10(1 + \frac{1}{2^\alpha-1}))  .\]
This now concludes the proof that $R_j\le {\exp(10(1 + \frac{1}{2^\alpha-1}))}$ and $A_j\le 2\cdot 2^{-{\alpha}j}$. 
The above also shows that \[|\partial_{i_j}\theta_{j}|_{L^2}\ge {c_\alpha} 2^{\frac{{\alpha}j(j+1)}{2}}{|\rd_x\theta_0|_{L^2}}.\]
\medskip

\emph{$\dot{H}^2$ Bound:}
Finally, from the bound   
\begin{align*}
|\theta _{j+1}|_{\dot{H}^2} &\le t_j^2 N_j^2|\theta _j|_{\dot H^2} 
({ 1 + 2(t_jN_j)^{-1} + (t_jN_j)^{-2} }) +|\theta _j|_{\dot{H}^1}\frac{t_j}{\eps_j} N_j^2, \\
&\le  2^{2{\alpha}j}|\theta _j|_{{\dot{H}^2}}\Big(1+ 2 \cdot 2^{-\alpha j} + 2^{-2\alpha j} + C_\alpha \frac{|\theta _j|_{{\dot{H}^1}}}{|\theta _j|_{{\dot{H}^2}}}2^{3j}\Big).
\end{align*}
But we know that $|\theta _j|_{{\dot{H}^2}}\ge |\theta _j|_{{\dot{H}^1}}^2$ and we have a lower bound on $|\theta _j|_{{\dot{H}^1}}$. Thus, 
\[|\theta _{j+1}|_{{\dot{H}^2}}\le 2^{2{\alpha}j}|\theta _j|_{{\dot{H}^2}}\Big(1+ {2 \cdot 2^{-{\alpha}j} + 2^{-2{\alpha}j} + } {C_\alpha} 2^{-\frac{{\alpha} j(j+1)}{2}} 2^{3j}\Big).\]
Since \[\sum_{j=0}^\infty ({2\cdot 2^{-{\alpha}j} + 2^{-2{\alpha}j} + }{C_\alpha}2^{-\frac{{\alpha}j(j+1)}{2}} 2^{3j})=c_\alpha<\infty,\]
we have that 
\[|\theta _{j+1}|_{{\dot{H}^2}}\le 2^{{\alpha}j(j+1)} e^{{c_\alpha}}{|\theta_0|_{\dot{H}^2}}.\] This concludes the proof. Let us now comment on the case where $L >M$. To adapt the proof, we just need to observe that (assuming $\alpha=1$ for simplicity) at $j = 1$, we have from explicit computations that $|\theta_1|_{\dot{H}^1} \approx |\rd_y \theta_1|_{L^2}$ and \begin{equation*}
	\begin{split}
	|\theta_1|_{\dot{W}^{1,\infty}} \le 10|\theta_1|_{\dot{H}^1},\quad |\theta_0|_{\dot{H}^2} \le 10|\theta_0|_{\dot{H}^1}^2. 
	\end{split}
	\end{equation*} Therefore one can just repeat the arguments above starting with $j=1$ instead of $j=0$.
\end{proof}

\subsection{Viscous Bounds}

 To complete the proof of anomalous dissipation, we need to prove the following lemma.
\begin{lemma}\label{thm:viscous-bounds}
	Assume that $t_j, N_j, \eps_j,$ and $\theta_0$ are given as in Lemma \ref{thm:inviscid-bounds}. With $u(t)$ defined as in \eqref{eq:velocity}, the solution $\theta^{\kappa}$ to the system \eqref{visceqn} with initial data $\theta_0$ satisfies \begin{equation*}
	\begin{split}
	|\theta^{\kappa}(t)|_{H^2} \le C|\theta(t)|_{H^1}^2
	\end{split}
	\end{equation*} for some universal constant $C$ independent of $\kappa$ of $t$, where $\theta(t)$ is the inviscid solution. 
\end{lemma}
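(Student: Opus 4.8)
The plan is to prove the bound by energy estimates for $\theta^\kappa$ on each shear interval $[T_i,T_{i+1})$, exploiting that the diffusion enters the $L^2$ balance of every derivative with a favorable sign, and then to compare the resulting growth against the inviscid lower bound on $|\theta(t)|_{H^1}$ supplied by Lemma~\ref{thm:inviscid-bounds}. Since the statement asks only for an \emph{upper} bound on $|\theta^\kappa|_{H^2}$, I never need a maximum-principle or $W^{1,\infty}$ estimate for $\theta^\kappa$: the $\dot H^1$ and $\dot H^2$ energy balances, together with the already-proven inviscid $\dot H^1$ lower bound, will suffice. The key structural observation is that the viscous derivative estimates reproduce, coefficient for coefficient, the discrete recursions for the inviscid solution derived in the proof of Lemma~\ref{thm:inviscid-bounds}, with the diffusion contributing only a nonpositive term that may be discarded.

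Concretely, fix an interval on which $u=(v(y),0)$ with $v=S_{\eps_i}(N_i\,\cdot)$, so that $\partial_t\theta^\kappa+v\,\partial_x\theta^\kappa=\kappa\Delta\theta^\kappa$ (vertical-shear intervals are identical after swapping $x\leftrightarrow y$, which is exactly the alternation of the ``good'' direction encoded by $i_j,i_{j+1}$ in Lemma~\ref{thm:inviscid-bounds}). Each derivative $\partial^\beta\theta^\kappa$ solves the same drift–diffusion equation with a forcing coming only from commuting $\partial^\beta$ with $v\partial_x$; testing against $\partial^\beta\theta^\kappa$ the diffusion gives $-\kappa|\nabla\partial^\beta\theta^\kappa|_{L^2}^2\le0$, which I drop. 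What survives are the couplings
\[
\tfrac12\partial_t|\partial_x\theta^\kappa|_{L^2}^2\le0,\qquad \partial_t|\partial_y\theta^\kappa|_{L^2}\le |v'|_{L^\infty}|\partial_x\theta^\kappa|_{L^2}\le N_i|\partial_x\theta^\kappa|_{L^2},
\]
and, for second derivatives, $|\partial_{xx}\theta^\kappa|_{L^2}$ nonincreasing together with
\[
\partial_t|\partial_{xy}\theta^\kappa|_{L^2}\le N_i|\partial_{xx}\theta^\kappa|_{L^2},\qquad \partial_t|\partial_{yy}\theta^\kappa|_{L^2}\le 2N_i|\partial_{xy}\theta^\kappa|_{L^2}+\tfrac{N_i^2}{\eps_i}|\partial_x\theta^\kappa|_{L^2},
\]
using $|S_{\eps_i}'|_{L^\infty}\le1$ and $|S_{\eps_i}''|_{L^\infty}\le 1/\eps_i$. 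Integrating over the interval (length $t_i$) reproduces, with the same factors $t_iN_i$ and $t_iN_i^2/\eps_i$, the inviscid recursions from Lemma~\ref{thm:inviscid-bounds}.

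I would then iterate exactly as in Lemma~\ref{thm:inviscid-bounds}. With $t_iN_i=2^{\alpha i}$ the leading factors accumulate to $2^{\alpha j(j+1)}$ for $|\theta^\kappa|_{\dot H^2}$, while the forcing terms $\frac{t_iN_i^2}{\eps_i}|\partial_x\theta^\kappa|_{L^2}$, which are only single-exponential in $i$ against the super-exponential growth $2^{\alpha i(i+1)/2}$ of $|\theta^\kappa|_{\dot H^1}$, sum to a convergent series after factoring out the leading growth — this is the identical summation $\sum_j\big(2\cdot2^{-\alpha j}+2^{-2\alpha j}+C_\alpha 2^{-\alpha j(j+1)/2}2^{3j}\big)<\infty$ that closes the inviscid estimate. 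This yields $|\theta^\kappa(T_j)|_{\dot H^2}\le C_\alpha 2^{\alpha j(j+1)}|\theta_0|_{\dot H^2}$, while Lemma~\ref{thm:inviscid-bounds} gives $|\theta(T_j)|_{H^1}^2\ge c_\alpha^2\,2^{\alpha j(j+1)}|\theta_0|_{H^1}^2$; absorbing the fixed data constant $|\theta_0|_{\dot H^2}/|\theta_0|_{H^1}^2$ gives $|\theta^\kappa(T_j)|_{\dot H^2}\le C|\theta(T_j)|_{H^1}^2$, and the lower-order pieces $|\theta^\kappa|_{L^2}\le|\theta_0|_{L^2}$ and $|\theta^\kappa|_{\dot H^1}$ (bounded by the inviscid $\dot H^1$ upper bound) are likewise dominated since $|\theta(T_j)|_{H^1}^2$ is bounded below. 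To pass from the nodes $T_j$ to arbitrary $t$, I run the differential inequalities above continuously in $t$; over a single interval $|\theta^\kappa(t)|_{\dot H^2}$ grows by at most $(1+t_iN_i)^2\approx2^{2\alpha i}$, and the inviscid lower bound, which holds at every $t$ via the explicit flow map $\mathcal U(t)$ (the $A_j,R_j$ computation of Lemma~\ref{thm:inviscid-bounds} uses only the partial displacement $\tau S_{\eps}(N\cdot)$, $\tau\le t_i$), grows by the comparable factor, so the ratio stays bounded throughout $[T_i,T_{i+1})$.

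The main obstacle is precisely the second-derivative forcing $\frac{N_i^2}{\eps_i}|\partial_x\theta^\kappa|_{L^2}$ generated by $v''=N_i^2S_{\eps_i}''$: since $\eps_i$ is exponentially small this term is enormous, and, unlike the dissipation, it carries no helpful sign. What renders it harmless is structural rather than analytic — it is forced by $\partial_x\theta^\kappa$, whose $L^2$ norm is nonincreasing under the horizontal shear, and the weights $t_iN_i^2/\eps_i$ are only single-exponential against the super-exponential growth of the genuinely growing $\dot H^2$ mode, hence subleading upon summation. This is exactly why the tuned choice $\eps_j=e^{-30(1+1/(2^\alpha-1))}2^{-2j}$ is made, and why the viscous estimate can mirror the inviscid one despite the loss of an explicit flow map. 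I would close by noting that all the integrations by parts are justified by the parabolic regularity of $\theta^\kappa$ for $\kappa>0$ with $u\in W^{2,\infty}$, or by a routine smooth approximation argument.
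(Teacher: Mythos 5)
Your proposal is correct and follows essentially the same route as the paper: commute derivatives through the shear equation, use the favorable sign of the diffusion and the fact that the $x$-derivatives are non-increasing under a horizontal shear, integrate the resulting $L^2$ differential inequalities over each interval $[T_j,T_{j+1}]$ to reproduce the inviscid recursion (with the $N_j^2/\eps_j$ forcing subleading against the super-exponential $\dot H^2$ growth), and compare the resulting bound $\sup_{[T_j,T_{j+1}]}|\theta^\kappa|_{\dot H^2}\lesssim 2^{\alpha j(j+1)}$ with the $\dot H^1$ lower bound of Lemma~\ref{thm:inviscid-bounds}. The only cosmetic difference is that the paper closes the recursion via the substitution $B_j=A_j+C_\alpha 2^{10j}$ together with the interpolation $|\theta^\kappa|_{H^1}^2\le|\theta^\kappa|_{H^2}$, whereas you track $|\theta^\kappa|_{\dot H^1}$ separately by the same iteration; both are fine.
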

\begin{proof} {Again, for simplicity we assume that the initial data is given by $\theta_0 = \sin(Mx)\sin(Ly)$ with $M\ge L$.} Recall that $T_j=\sum_{i=0}^j t_i$ where $t_0=0$ and $t_i$ is as above. It suffices to  prove desired $H^2$ bound on the viscous solution on each time interval $[T_j, T_{j+1}]$. In particular, we prove bounds on solutions to
\[\partial_t \theta^\kappa+S_{\eps_j}(N_j y)\partial_x \theta^\kappa=\kappa\Delta \theta^\kappa,\] for $t\in [T_j, T_{j+1}],$ which is the equation when $j$ is even (when $j$ is odd the situation is almost identical).
Let us write the equations for all derivatives up to order $2$.  

\[\partial_t  \theta^\kappa+S_{\eps_j}(N_j y)\partial_x \theta^\kappa=\kappa\Delta \theta^\kappa,\]
\[\partial_t \partial_x \theta^\kappa+S_{\eps_j}(N_j y)\partial_{xx} \theta^\kappa=\kappa\Delta\partial_x \theta^\kappa,\]
\[\partial_t \partial_{xx} \theta^\kappa+S_{\eps_j}(N_j y)\partial_{xxx} \theta^\kappa=\kappa\Delta   \partial_{xx}\theta^\kappa,\]
\[\partial_t \partial_y \theta^\kappa+S_{\eps_j}(N_j y)\partial_x \partial_y \theta^\kappa+N_j S_{\eps_j}'(N_j y)\partial_x \theta^\kappa=\kappa\Delta \partial_y \theta^\kappa,\]
\[\partial_t \partial_{xy} \theta^\kappa+S_{\eps_j}(N_j y)\partial_{x}\partial_{xy} \theta^\kappa+N_j S_{\eps_j}'(N_j y)\partial_{xx} \theta^\kappa=\kappa\Delta \partial_{xy}\theta^\kappa,\]
\[\partial_t \partial_{yy} \theta^\kappa+S_{\eps_j}(N_j y)\partial_x \partial_{yy} \theta^\kappa+2N_j S_{\eps_j}'(N_j y)\partial_{xy} \theta^\kappa+N_j^2 S_{\eps_j}''(N_j y)\partial_x \theta^\kappa=\kappa\Delta \partial_{yy} \theta^\kappa.\]
In particular, from the first three equations we have that
\[|\theta^\kappa(t)|_{L^2}\le |\theta^\kappa(T_{j})|_{L^2},\qquad |\partial_x \theta^\kappa(t)|_{L^2}\le |\partial_x \theta^\kappa(T_{j})|_{L^2}, \qquad |\partial_{xx}\theta^\kappa(t)|_{L^2}\le |\partial_{xx}\theta^\kappa(T_{j})|_{L^2},\] for all $t\in [T_j, T_{j+1}]$. 
Moreover, from the second two equations (using the above bounds) we have that
\begin{gather}
  |\partial_y \theta^\kappa(t)|_{L^2}\le N_j (t-T_j)|\partial_x \theta^\kappa(T_j)|_{L^2}+|\partial_y \theta^\kappa(T_j)|_{L^2}\,,
  \\
  |\partial_{xy} \theta^\kappa(t)|_{L^2}\le N_j (t-T_j)|\partial_{xx} \theta^\kappa(T_j)|_{L^2}+|\partial_{xy} \theta^\kappa(T_j)|_{L^2}\,,
\end{gather}
for all $t\in [T_j, T_{j+1}]$. 
Finally, from the third equation and using the above bounds we get:
\[|\partial_{yy} \theta^\kappa(t)|_{L^2}\le (t-T_j)^2 N_j^2 |\partial_{xy}\theta^\kappa(T_j)|_{L^2}+(t-T_j)N_j^2\frac{1}{\eps_j}|\partial_x \theta^\kappa(T_j)|_{L^2}+|\partial_{yy} \theta^\kappa(T_j)|_{L^2},\] for all $t\in [T_j, T_{j+1}]$. 
Thus, 
\[\sup_{t\in [T_j, T_{j+1}]}|\theta^\kappa|_{\dot H^2}\le t_j^2 N_j^2 |\theta^\kappa(T_j)|_{H^2}+\frac{t_j}{\eps_j}N_j^2|\theta^\kappa(T_j)|_{H^1}+|\theta^\kappa(T_j)|_{H^2}.\]
In the above, we are using that $N_j t_j\ge 1$. 
Therefore, 
\[\sup_{t\in [T_j, T_{j+1}]}|\theta^\kappa|_{\dot H^2}\le (t_j^2 N_j^2+1) |\theta^\kappa(T_j)|_{H^2}+\frac{t_j}{\eps_j}N_j^2|\theta^\kappa(T_j)|_{H^1}.\]
Note that this bound is valid for every $j$ (even and odd). 
Now let $A_j=\sup_{j\in [T_j, T_{j+1})}|\theta^\kappa|_{H^2}$. Then, using the definition of $\eps_j, t_j,$ and $N_j$ we see:
\[\qquad A_j\le (2^{2{\alpha}j}+2)A_{j-1}+{C_\alpha} 2^{10j} \]
using the fact that $|\theta^\kappa|_{H^1}^2\le |\theta^\kappa|_{H^2}$ and the Cauchy-Schwarz inequality. {We may assume $A_0=1$ by normalizing the initial data appropriately.}
We may now define $B_j=A_j+C_\alpha 2^{10j}$ and we see:
\[B_{j}\le 2 C_\alpha 2^{10j}+(2^{2{\alpha}j}+1)(B_{j-1}-C_\alpha 2^{10(j-1)}).\]
Thus, if $j\ge {J(\alpha)}$ we have
\[B_j\le (2^{2{\alpha}j}+1)B_{j-1}\]
and it follows that \[B_j \le \tilde C_\alpha 2^{\alpha j(j+1)}.\]
In conclusion, we see that
\[\sup_{t\in [T_j, T_{j+1}]} |\theta^\kappa|_{H^2}\le \tilde C_\alpha \cdot 2^{{\alpha}j(j+1)}\] for all $j$. 
In particular, in light of the $H^1$ bound of Lemma \ref{thm:inviscid-bounds} we see that 
\[|\theta^\kappa|_{H^2}\le C_\alpha|\theta |_{H^1}^2,\] where $\theta$ is the inviscid solution. \end{proof}

\subsection{Proof of Theorem \ref{mainthm}}

The previous lemmas establish anomalous dissipation for initial data given by a pure harmonic. To conclude the proof of Theorem \ref{mainthm}, we need to treat the case of small $L^2$ perturbation, and show that the velocity field $u(t)$ we have defined in \eqref{eq:velocity} can belong to $L^1([0,1];C^{\alpha'})$ for any $\alpha'<1$. For the latter, we simply compute that \begin{equation*}
\begin{split}
|u|_{L^1([0,1];C^{\alpha'})} \lesssim \sum_{j \ge J(\alpha)} t_j N_j^{\alpha'} \lesssim \sum_{j \ge J(\alpha)} 2^{ (1+\alpha)\alpha'j -j } <+\infty 
\end{split}
\end{equation*} once we take $0<\alpha < \frac{1}{\alpha'} - 1$.

Now we assume that, for some $\varepsilon_\alpha>0$ to be determined, $\theta_0$ satisfies \begin{equation*}
\begin{split}
|\theta_0 - \lambda \psi|_{L^2} \le \varepsilon_\alpha|\theta_0|_{L^2},
\end{split}
\end{equation*}
where we may assume without loss of generality that  $\psi = \sin(Mx)$, $\lambda = 1$, and $\psi$ is orthogonal with $\theta_0-\psi$ in $L^2$. We then simply decompose
\begin{equation*}
\begin{split}
\theta_0(x,y) = \sin(Mx) + ( {\theta}_0 - \sin(Mx)) \defeq \theta_0^L + \theta_0^H
\end{split}
\end{equation*}
so that
\begin{equation*}
\begin{split}
|\theta_0|_{L^2}^2 = |\theta_0^L|_{L^2}^2 + |\theta_0^H|_{L^2}^2. 
\end{split}
\end{equation*} From the smallness assumption, we have \begin{equation*}
\begin{split}
\sqrt{1-\varepsilon_\alpha^2} |\theta_0^H|_{L^2} \le \varepsilon_\alpha|\theta_0^L|_{L^2}
\end{split}
\end{equation*} which gives \begin{equation*}
\begin{split}
|\theta_0|_{L^2} \ge |\theta_0^L|_{L^2}-|\theta_0^H|_{L^2} \ge (1 - \frac{\varepsilon_\alpha}{\sqrt{1-\varepsilon_\alpha^2}}) |\theta_0^L|_{L^2}. 
\end{split}
\end{equation*}
From previous lemmas, we have that for any $0 < \kappa \le 1$,
\begin{equation*}
\begin{split}
\frac{1}{2}( |\theta_0^L|_{L^2}^2 - |\theta^{L,\kappa}(1)|_{L^2}^2 ) = \kappa \int_0^1 |\nabla \theta^{L,\kappa}|_{L^2}^2 dt \ge \chi_\alpha |\theta_0^L|_{L^2}^2
\end{split}
\end{equation*}
where $\theta^{L,\kappa}$ is the solution to
\begin{equation*}
\begin{split}
\rd_t \theta^{L,\kappa} + u \cdot\nabla \theta^{L,\kappa} = \kappa \Delta \theta^{L,\kappa} 
\end{split}
\end{equation*}
with initial data $\theta^L_0$.
Similarly, we define $\theta^{H,\kappa}$ to be the solution with initial data $\theta^H_0$. Since the equation is linear, we have that $\theta^\kappa = \theta^{L,\kappa} + \theta^{H,\kappa}$. We now estimate that at time 1,  \begin{equation*}
\begin{split}
|\theta^\kappa(1)|_{L^2} &\le |\theta^{L,\kappa}(1)|_{L^2} + |\theta^{H,\kappa}(1)|_{L^2} 
\le (1-2\chi_\alpha)|\theta^{L}_0|_{L^2} + |\theta^{H}_0|_{L^2} \\
& \le \left( (1-2\chi_\alpha)(1 - \frac{\varepsilon_\alpha}{\sqrt{1-\varepsilon_\alpha^2}})^{-1} +  \frac{\varepsilon_\alpha}{\sqrt{1-\varepsilon_\alpha^2}} \right)  |\theta_0|_{L^2} <( 1 - \frac{1}{10}\chi_\alpha)|\theta_0|_{L^2}
\end{split}
\end{equation*} once we take $\varepsilon_\alpha = \chi_\alpha/100$, say. The proof is complete.
\hfill\qedsymbol

\subsection{Proof of Theorem \ref{mainthm2}}

We now prove Theorem \ref{mainthm2}, which establishes anomalous dissipation for arbitrary mean-zero initial data $\theta_0 \in H^2(\mathbb{T}^2)$. As in the above, we achieve it via Proposition \ref{thm:criterion1}, but with velocity vector field depending on $\theta_0$. This time, given $\alpha>0$, we take \[t_j=2^{-j},\qquad N_j=2^{{(1+\alpha)}j},\qquad \eps_j= a_0 \, {\exp\left(-30(1 + \frac{1}{2^\alpha-1})\right)}\cdot 2^{-2j},\] and define the velocity field $u(t)$ for $t \in [T_j,T_{j+1})$ by \begin{equation}\label{eq:velocity2}
\begin{split}
u(t, x,y) = \begin{cases}
\begin{pmatrix}
(-1)^{s_j}S_{\eps_j}(N_jy) \\
0
\end{pmatrix}  & j \mbox{ even},  \\
\begin{pmatrix}
0 \\
(-1)^{s_j}S_{\eps_j}(N_jx) 
\end{pmatrix} & j \mbox{ odd, }
\end{cases} 
\end{split}
\end{equation} where $T_j = \sum_{i=0}^j t_i$ with $t_0 = 0$. The constant $1 \ge  a_0>0$ and the signs $s_j \in \{0,1\} $ will be chosen depending on the initial data, as we shall see below. Apart from these additional parameters, the velocity field is exactly the same with  \eqref{eq:velocity}. 

We need to prove the assumptions of Proposition \ref{thm:criterion1}, and to do so we follow exactly the same steps from the previous section. Inspecting the proof, we see that the only place which needs to be modified is the part where we obtain an $H^1$ lower bound on the solution. In this process $s_j$ and $a_0$ will be determined. To this end we define, assuming that $\theta_j$ is given with $j$ odd, \begin{equation*}
\begin{split}
\theta_{j+1}^{\pm} = \theta_j (x \pm t_j S_{\eps_j}(N_jy),y) 
\end{split}
\end{equation*} and compute: 
\begin{equation*}
\begin{split}
\rd_{i_{j+1}} \theta_{j+1}^{\pm}(x,y) &= \rd_{i_{j+1}}\theta_j(x \pm t_j S_{\eps_j}(N_jy),y ) \pm t_jN_j S'_{\eps_j}(N_jy) \rd_{i_j}\theta_j(x + t_j S_{\eps_j}(N_jy),y ).
\end{split}
\end{equation*} A direct computation gives \begin{equation*}
\begin{split}
\sum_{\pm}|\rd_{i_{j+1}} \theta_{j+1}^{\pm} |_{L^2}^2 = 2|\rd_{i_{j+1}}\theta_j|_{L^2}^2 + 2(t_jN_j)^2 |S'_{\eps_j}(N_jy) \rd_{i_j}\theta_j|_{L^2}^2
\end{split}
\end{equation*} since the cross terms cancel each other. Then, there exists $s_j \in \{ 0,1 \}$ such that \begin{equation*}
\begin{split}
|\rd_{i_{j+1}} \theta_{j+1}^{(-1)^{s_j}} |_{L^2} \ge t_jN_j|S'_{\eps_j}(N_jy) \rd_{i_j}\theta_j|_{L^2} \ge t_jN_j |\rd_{i_j} \theta_j|_{L^2} (1 - \sqrt{\eps_j} \frac{|\theta_j|_{\dot{W}^{1,\infty}}}{|\rd_{i_j}\theta_j|_{L^2}} ).
\end{split}
\end{equation*} Therefore, in the bootstrapping scheme with $A_j$ and $R_j$, we have instead (assuming $\alpha = 1$ for simplicity) \begin{equation*}
\begin{split}
A_j \le 2^{-j} (1 - \sqrt{\eps_j}R_j)^{-1},
\end{split}
\end{equation*} while we still have the same inequality for $R_{j+1}$. We may now choose $a_0>0$ sufficiently small (depending only on $\theta_0$) to guarantee that  $A_1 <1$. Now the same bootstrap argument gives the desired lower bound.
\strut\hfill\qedsymbol

\section{Non-Uniqueness of Weak Solutions }\label{s:nonuniq}

The goal of this section is to prove Theorem~\ref{nonuniq}.
Recall that we say $\theta\in C_w(0,T; L^2(\mathbb{T}^d))$ is weak solution of the transport equation~\eqref{inveqn} on $\mathbb{T}^d\times [0,T]$ if
\begin{equation}\label{weaktrans}
  \int_0^T \int_{\mathbb{T}^d}
    \theta \paren[\big]{
      \partial_t \varphi  + u  \cdot \nabla \varphi
    }
    \rmd x \rmd t
    =  -\int_{\mathbb{T}^d} \theta_0(x) \varphi(x,0)  \rmd x
\end{equation}
holds, for all test functions $\varphi \in C_0^\infty ([0,T) \times \mathbb{T}^d)$.

  \begin{proof}[Proof of Theorem~\ref{nonuniq}]
  In what follows, we construct two distinct weak solutions; one time irreversible solution arising from a vanishing viscosity limit and one time reversible solution.
  \medskip
  
  \emph{Time irreversible weak solution:}
Let $\theta$ be a vanishing viscosity solution on $[0,2T]\times \mathbb{T}^d$ constructed as the limit of the approximating sequence $\theta^{\kappa_k}$, $\kappa_k \to 0$ of solutions of the advection diffusion equation with velocity $u_*$.
Indeed, since $\theta^{\kappa_k}$ is uniformly bounded in $L^\infty([0,2T]\times \mathbb{T}^d)$, by an application of Aubin-Lions lemma we have that $\theta^{\kappa_k} \to \theta$ in  $C([0,2T);w-L^2(\mathbb{T}^d))$ where $w-L^2$ is $L^2$ endowed with the weak topology.
Since the equation is linear and $u\in L^\infty(0,2T; L^\infty(\mathbb{T}^d))$ and $\theta\in L^\infty([0,2T]\times \mathbb{T}^d)\cap C_w([0,2T); L^2(\mathbb{T}^d))$, it is simply to verify that the weak limit $\theta$ is a weak solution of the transport equation on $[0,2T]\times \mathbb{T}^d$ in the sense of \eqref{weaktrans}.
Furthermore, since the $L^2$ norm is weakly lower semi-continuous, for all $t\in [T,2T]$ 
  \begin{equation}
  | \theta(t)|_{L^2}^2 \leq \liminf_{\kappa \to 0}  | \theta^\kappa(t)|_{L^2}^2
    \leq   | \theta_0|_{L^2}^2 - \limsup_{\kappa \to 0} \kappa \int_0^T |\nabla \theta^\kappa|_{L^2}^2 \rmd t\leq  (1- \chi_\alpha )| \theta_0|_{L^2}^2 <| \theta_0|_{L^2}^2 
  \end{equation}
upon applying Theorem \ref{mainthm2}.
Thus,
\begin{equation}
\label{dissipating}
  \sup_{t\in [T, 2T]} | \theta(t)|_{L^2}^2 \leq (1 - \chi_\alpha) \abs{\theta_0}_{L^2}^2 <    | \theta_0 |_{L^2}^2 \,,
\end{equation}
and the inviscid solution has lost a non-zero fraction of its initial energy after time $T$.
\medskip
  
 \emph{Time reversible weak solution:}
  We now construct another weak solution $\bar{\theta}$ distinct from $\theta$ on the interval $[T,2T]$.  This solution is defined by the formula
    \begin{equation}
 \bar \theta(t) = 
  \begin{cases} \theta(t) & t\in [0,T),\\
  \theta(2T-t) & t\in [T,2T].
  \end{cases}
  \end{equation}
  Note first that, since $\theta\in C_w([0,T]; \mathbb{T}^d)$, by construction $\bar\theta\in C_w([0,2T]; \mathbb{T}^d)$.
   We now check that it is a weak solution on the entire time interval with the velocity $u$.   That is, we aim to show that for any $\varphi \in C^\infty_0([0, 2T)\times \mathbb{T}^d)$ we have that
   \begin{equation}
\int_0^{2T} \int_{\mathbb{T}^d}  \bar\theta (x,t) \partial_t \varphi(x,t)  \rmd x \rmd t+ \int_0^{2T} \int_{\mathbb{T}^d} \bar\theta (x,t)u(x,t)  \cdot \nabla \varphi(x,t)  \rmd x \rmd t =  -\int_{\mathbb{T}^d} \theta_0 (x)  \varphi(x,0)  \rmd x .
\end{equation}
To proceed, divide $\varphi$ into even and odd parts about $t=T$:
\begin{equation}
\varphi = \varphi_{e} + \varphi_{o}, \qquad \varphi_{e/o}\defeq \frac{ \varphi(T + (t-T)) \pm  \varphi(T - (t-T))}{2}.
\end{equation}
Note that since $\theta (x,t)$ is even and $u(x,t)$ is odd about $t=T$, the left-hand-side of the above expression vanishes identically for the even part of $\varphi$, namely it reduces to 
   \begin{equation}
\int_0^{2T} \int_{\mathbb{T}^d}  \bar\theta (x,t) \partial_t \varphi_o(x,t)  \rmd x \rmd t+ \int_0^{2T} \int_{\mathbb{T}^d} \bar\theta (x,t)u(x,t)  \cdot \nabla \varphi_o(x,t)  \rmd x \rmd t =  -\int_{\mathbb{T}^d} \theta_0 (x)  \varphi(x,0)  \rmd x .
\end{equation}
Splitting up different regions, we have
   \begin{align}\nonumber
   \int_0^{T} \int_{\mathbb{T}^d}& \theta (x,t) \partial_t \varphi_o(x,t)  \rmd x \rmd t+ \int_0^{T} \int_{\mathbb{T}^d} \theta (x,t)u_*(x,t)  \cdot \nabla \varphi_o(x,t)  \rmd x \rmd t =  -\int_{\mathbb{T}^d} \theta_0 (x)  \varphi(x,0)  \rmd x\\ \nonumber
      &-\int_{T}^{2T}  \int_{\mathbb{T}^d} \theta (x,2T-t) \partial_t \varphi_o(x,t)  \rmd x \rmd t+ \int_{T}^{2T} \int_{\mathbb{T}^d} \theta (x,2T-t)u_*(x,2T-t) \cdot \nabla \varphi_o(x,t)  \rmd x \rmd t.
   \end{align}
Changing variables and introducing $\psi(x,\tau) = \varphi_o(x, 2T-\tau )\in C^\infty_0([0, 2T]\times \mathbb{T}^d)$ and additionally $\psi(x,T)=0$ since $\varphi_o$ vanishes at $t=T$ owing to the fact that it is odd,  we have
   \begin{align}\nonumber
   \int_0^{T} \int_{\mathbb{T}^d}& \theta (x,t) \partial_t \varphi_o(x,t)  \rmd x \rmd t+ \int_0^{T} \int_{\mathbb{T}^d} \theta (x,t)u_*(x,t)  \cdot \nabla \varphi_o(x,t)  \rmd x \rmd t =  -\int_{\mathbb{T}^d} \theta_0 (x)  \varphi(x,0)  \rmd x\\ \nonumber
      &+  \int_0^{T} \int_{\mathbb{T}^d} \theta (x,\tau) \partial_\tau \psi(x,\tau )  \rmd x \rmd \tau+    \int_0^{T} \int_{\mathbb{T}^d} \theta (x,\tau)u_*(x,\tau) \cdot \nabla \psi(x,\tau )  \rmd x \rmd \tau .
   \end{align}
Since $\theta$ is a weak solution in the class $C_w([0,T]; \mathbb{T}^d)$ on the interval $[0,T]$ and $\psi(x,T)=0$ while $\psi(x,0)=\phi_0(x,2T)$,
      \begin{align}\nonumber
  \int_0^{T} \int_{\mathbb{T}^d}\left[ \theta (x,\tau) \partial_\tau \psi(x,\tau ) + \theta (x,\tau)u_*(x,\tau) \cdot \nabla \psi(x,\tau ) \right] \rmd x \rmd t  =-\int_{\mathbb{T}^d} \theta_0 (x)  \varphi_o(x,2T)  \rmd x.
   \end{align}
We also have
         \begin{align}\nonumber
 \int_0^{T} \int_{\mathbb{T}^d}\left[ \theta (x,t) \partial_t \varphi_o(x,t) + \theta (x,t)u_*(x,t)  \cdot \nabla \varphi_o(x,t) \right] \rmd x \rmd t =  -\int_{\mathbb{T}^d} \theta_0 (x)  \varphi_o(x,0)  \rmd x.
   \end{align}
   Since
   \begin{equation}
    \varphi_o(x,0)=  -\varphi_o(x,2T) = \frac{1}{2}\varphi(x,0),
   \end{equation}
we find that $\bar{\theta}\in C_w([0,2T]; \mathbb{T}^d)$ is a weak solution on the interval $[0,2T]$.       Finally, we note that
     \begin{equation}
 | \bar{\theta}(2T)|_{L^2}^2 =  | \theta_0|_{L^2}^2.
     \end{equation}
In light of \eqref{dissipating}, the solutions $\theta$ and $\bar{\theta}$ are distinct. 
  \end{proof}

\section{Discussion:  Obukhov-Corrsin Theory}\label{disc}

In the context of passive scalar turbulence, Obukhov \cite{Obukhov49} and Corrsin \cite{Corrsin51} studied the `inertial-range' scaling behavior of scalar structure functions $S_p^\theta (\ell)\defeq \langle |\delta_\ell \theta|^p\rangle\sim \ell^{\zeta_p(\theta)}$ 
in a fully developed homogenous isotropic velocity field exhibiting Kolmogorov 1941 (K41) `monofractal' scaling \cite{Kolmogorov41}
\begin{equation}
S_p^u(\ell)\defeq \langle |\delta_\ell u|^p\rangle \sim (\ve \ell)^{p/3}, \qquad \ell_\nu \ll \ell \ll L^u
\end{equation}
for all $p\ge 1$  where $L^u$ is the integral scale of the velocity field and $\ell_\nu$ is the dissipation scale (the K41 prediction being $\ell_\nu =(\nu^3/\varepsilon)^{1/4}$ where $\varepsilon\defeq \lim_{\nu \to 0} \nu\langle |\nabla u^\nu|^2\rangle >0$ is the anomalous energy dissipation rate).  Said another way, in the idealized limit $\nu \to 0$, the velocity field  is assumed to be $1/3$--H\"{o}lder and not better.
Based on dimensional grounds,  Obukhov and Corrsin  independently predicted that the scalar field would also exhibit the same scaling 
\begin{equation}
S_p^\theta (\ell)\defeq \langle |\delta_\ell \theta|^p\rangle \sim  (\chi/\varepsilon^{1/3})^{p/2} \ell^{p/3}, \qquad \ell_\nu \lesssim \ell_\kappa \ll \ell \ll L^\theta\lesssim L^u
\end{equation}
where $\chi\defeq \lim_{\kappa,\nu \to 0} \kappa\langle |\nabla \theta^\kappa|^2\rangle>0$ is the (presumed) anomalous dissipation of the passive scalar, $ L^\theta$ is the typical length-scale of the scalar input initially or by a force, and $\ell_\kappa$ is the dissipative length for the scalar field ($\ell_\kappa = (\kappa^3/\ve)^{1/4}$ in the Corrsin-Obukhov theory).  Their scaling theory can be generalized  as
\begin{equation}
S_p^u(\ell)\sim \ell^{\alpha p}, \quad \alpha \in (0,1) \qquad \text{implies} \qquad S_p^\theta(\ell)\sim \ell^{\left(\frac{1-\alpha}{2}\right) p}.
\end{equation}
In the idealized limit of $\nu, \kappa\to 0$, this says that if the velocity $u\in C^\alpha$ is H\"{o}lder with exponent $\alpha\in (0,1)$ and not better, then the scalar should be H\"{o}lder $\theta\in C^\beta$ with exponent $\beta= (1-\alpha)/2$ and not better. These constraints can be understood as a consequence of the fractal geometry of scalar level sets in rough velocities \cite{ConstantinProcaccia93,ConstantinProcaccia94}.  Moreover,  the entire picture has been generalized to accommodate (the more realistic setting) of multifractal velocity fields with the property that $S_p^u(\ell)\sim  \ell^{\zeta_p(u)}$ where $\zeta_p(u)$ may depend non-linearly on $p$ resulting in constraints on the multifractal spectrum of the scalar $\zeta_p(\theta)$ \cite{Eyink96}.

In analogy to the Onsager conjecture for the dissipation anomaly of kinetic energy in incompressible fluids \cite{Onsager49}, one can regard the above theory as setting a threshold condition for the anomalous dissipation of scalar energy \cite{Eyink96}.  Namely, if  $u\in C^\alpha$  and  $\theta^\kappa \in C^\beta$ uniformly  then
\begin{equation}
\chi\defeq  \kappa \int_0^T \int_{\mathbb{T}^d} |\nabla \theta^\kappa|^2 \rmd x \rmd t \to 0 \qquad \text{unless} \qquad \beta>\frac{1-\alpha}{2}. 
\end{equation}
Along these lines, we first establish an upper bound on the dissipation for vanishing diffusion limits in rough velocity fields.  A similar estimate was provided for viscous energy dissipation in the context of Onsager's conjecture for hydrodynamic turbulence \cite{DrivasEyink19}.  We also study what happens when the velocity field is smooth up until a single point in time where it may lose regularity.  The latter is relevant to the problem in which an inertial range for the velocity field evolves dynamically by some cascade process to the point where the field becomes non-smooth in a way consistent with the observed long-time inertial range scaling in real turbulence.  In fact, one has the following result.
\begin{theorem}\label{thm1}
Let $u\in L^1([0,T]; C^\alpha(\mathbb{T}^d))$ for $\alpha\in (0,1]$ be a given divergence free vector field.  Suppose that the family $\{\theta^\kappa\}_{\kappa>0}$ is bounded in $L^\infty([0,T] ; C^\beta(\mathbb{T}^d))$ for $\beta\in (0,1]$ uniformly in $\kappa$, then 
\begin{equation}\label{noanombnd}
 \kappa \int_0^T \int_{\mathbb{T}^d} |\nabla \theta^\kappa|^2 \rmd x \rmd t\le C  \kappa^{\frac{\alpha +2\beta-1}{\alpha+1}}
\end{equation}
for an absolute constant $ C$ depending only on $T$ and the H\"{o}lder norms of the solutions.
In particular, if $\beta>(1-\alpha)/2$ then there can be no anomalous scalar dissipation.  
If furthermore
\begin{equation*}
u\in  L^1_{loc}([0,T); W^{1,\infty}(\mathbb{T}^d))\cap L^1([0,T]; C^\alpha(\mathbb{T}^d))
\qquad
\text{for } \alpha<1\,,
\end{equation*}
 and if $\beta=(1-\alpha)/2$, then 
\begin{equation}
\lim_{\kappa\to 0} \kappa \int_0^T \int_{\mathbb{T}^d} |\nabla \theta^\kappa|^2 \rmd x \rmd t=0.
\end{equation}
\end{theorem}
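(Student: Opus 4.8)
The plan is to compare the exact scalar energy balance for $\theta^\kappa$ with a coarse-grained version obtained after convolving with a mollifier $\rho_\ell$ at scale $\ell$, in the spirit of the Constantin--E--Titi analysis of Onsager's conjecture (as in the cited \cite{DrivasEyink19}). Write $\theta = \theta^\kappa$ and $\theta_\ell = \theta * \rho_\ell$. The exact balance reads $\kappa\int_0^T|\nabla\theta|_{L^2}^2\,dt = \tfrac12(|\theta_0|_{L^2}^2 - |\theta(T)|_{L^2}^2)$, while testing the mollified equation against $\theta_\ell$ and using $\mathrm{div}\,u_\ell = 0$ to kill the transport term gives
\[
\kappa\int_0^T|\nabla\theta_\ell|_{L^2}^2\,dt = \tfrac12\big(|\theta_{0,\ell}|_{L^2}^2 - |\theta_\ell(T)|_{L^2}^2\big) - \int_0^T\!\!\int_{\T^d}\nabla\theta_\ell\cdot\tau_\ell\,\rmd x\,\rmd t,\qquad \tau_\ell := (u\theta)_\ell - u_\ell\theta_\ell .
\]
Subtracting, the full dissipation equals the resolved dissipation $\kappa\int|\nabla\theta_\ell|_{L^2}^2$, plus a boundary term measuring $|\theta|_{L^2}^2 - |\theta_\ell|_{L^2}^2$ at $t=0,T$, plus the subscale flux $\int_0^T\!\int\nabla\theta_\ell\cdot\tau_\ell$. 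I would bound each term using only the uniform $C^\beta$ bound on $\theta$ and the $C^\alpha$ bound on $u$, and then choose $\ell = \kappa^{1/(\alpha+1)}$, the Obukhov--Corrsin dissipation length.

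First I would record the three estimates. Writing $\nabla\theta_\ell(x) = \int\nabla\rho_\ell(y)(\theta(x-y)-\theta(x))\,\rmd y$ gives $|\nabla\theta_\ell|_{L^\infty}\lesssim \ell^{\beta-1}|\theta|_{C^\beta}$, hence the resolved dissipation $\kappa\int_0^T|\nabla\theta_\ell|_{L^2}^2\,dt\lesssim \kappa\, T\, \ell^{2\beta-2}$. For the boundary term, rather than the crude Cauchy--Schwarz bound $\ell^\beta$, a Littlewood--Paley decomposition of $|\theta|_{L^2}^2 - |\theta_\ell|_{L^2}^2 = \sum_k|\hat\theta(k)|^2(1-\hat\rho_\ell(k)^2)$ using $\|\Delta_j\theta\|_{L^2}\lesssim 2^{-\beta j}|\theta|_{C^\beta}$ yields the sharper $O(\ell^{2\beta})$. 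For the flux, the Constantin--E--Titi commutator estimate gives $|\tau_\ell(t)|_{L^2}\lesssim \ell^{\alpha+\beta}|u(t)|_{C^\alpha}|\theta(t)|_{C^\beta}$, so pairing with $|\nabla\theta_\ell|_{L^2}\lesssim \ell^{\beta-1}$ and integrating in time produces $\big|\int_0^T\!\int\nabla\theta_\ell\cdot\tau_\ell\big|\lesssim \ell^{\alpha+2\beta-1}\int_0^T|u(t)|_{C^\alpha}\,\rmd t$, which is finite precisely because $u\in L^1_tC^\alpha$. Collecting, $\kappa\int_0^T|\nabla\theta|_{L^2}^2\lesssim \kappa\ell^{2\beta-2} + \ell^{2\beta} + \ell^{\alpha+2\beta-1}$; with $\ell = \kappa^{1/(\alpha+1)}$ the first and third terms are both $\kappa^{(\alpha+2\beta-1)/(\alpha+1)}$, while the second has exponent $2\beta/(\alpha+1)\ge(\alpha+2\beta-1)/(\alpha+1)$ (since $\alpha\le1$), establishing \eqref{noanombnd}. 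The exponent is positive exactly when $\beta>(1-\alpha)/2$.

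For the endpoint $\beta = (1-\alpha)/2$, where the exponent in \eqref{noanombnd} degenerates to $0$, I would exploit the extra hypothesis $u\in L^1_{\mathrm{loc}}([0,T);W^{1,\infty})$ by splitting $[0,T] = [0,T-\delta]\cup[T-\delta,T]$. On $[0,T-\delta]$ the field is Lipschitz in the integrated sense, so rerunning the argument with the Lipschitz ($\alpha=1$) commutator bound $|\tau_\ell|_{L^2}\lesssim\ell^{1+\beta}|u|_{W^{1,\infty}}|\theta|_{C^\beta}$ gives a flux of order $\ell^{2\beta}\int_0^{T-\delta}|u|_{W^{1,\infty}}$; optimizing in $\ell$ bounds the dissipation on $[0,T-\delta]$ by $C_\delta\,\kappa^{\beta}$, which tends to $0$ as $\kappa\to0$ for each fixed $\delta$. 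On $[T-\delta,T]$ I would rerun the main estimate localized to this interval: since now $\alpha+2\beta-1 = 0$, the flux is bounded by $C\int_{T-\delta}^T|u(t)|_{C^\alpha}\,\rmd t =: \omega(\delta)$ uniformly in $\ell$, while choosing $\ell = \kappa^{1/(2(\alpha+1))}$ sends the resolved and boundary terms to $0$. Hence $\limsup_{\kappa\to0}\kappa\int_0^T|\nabla\theta^\kappa|_{L^2}^2 \le C\,\omega(\delta)$, and letting $\delta\to0$ (so that $\omega(\delta)\to0$ by absolute continuity of the integral of the $L^1$ function $t\mapsto|u(t)|_{C^\alpha}$) gives the claimed vanishing.

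The main obstacle is the sharp bookkeeping of the two error terms against the target exponent. The subscale flux is the heart of the matter and must be estimated with the commutator using $u_\ell\theta_\ell$ (not $u\theta_\ell$) in $\tau_\ell$ --- this is what produces the double increment of $\theta$ and hence the correct power $\ell^{\alpha+2\beta-1}$ rather than a spurious $\ell^{\alpha+\beta-1}$ that would blow up as $\ell\to0$. Equally delicate is the boundary term: the naive $\ell^\beta$ from Cauchy--Schwarz is insufficient when $\alpha+\beta>1$, and one genuinely needs the $\ell^{2\beta}$ bound from the frequency-localized estimate. Finally, in the endpoint argument one must verify that the flux tail $\omega(\delta)$ is independent of $\ell$ --- which holds precisely because $\alpha+2\beta-1=0$ --- so that the order of limits $\kappa\to0$ then $\delta\to0$ may be taken.
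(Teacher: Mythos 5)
Your proposal is correct and follows essentially the same route as the paper: mollify the equation, compare the exact and coarse-grained energy balances, bound the subscale flux with the Constantin--E--Titi commutator estimate $|\tau_\ell(u,\theta)|\lesssim \ell^{\alpha+\beta}$, take $\ell=\kappa^{1/(\alpha+1)}$, and handle the endpoint $\beta=(1-\alpha)/2$ by splitting $[0,T]$ at $T-\delta$ and using the Lipschitz bound on the first piece. The only cosmetic difference is that you obtain the $O(\ell^{2\beta})$ boundary term via a Littlewood--Paley argument, whereas the paper writes it as $\tfrac12\int\tau_\ell(\theta^\kappa,\theta^\kappa)$ and uses the pointwise bound $0\le\tau_\ell(f,f)\le|f|_{C^\beta}^2\ell^{2\beta}$ (the sign also letting it discard the term at time $T$); both yield the same exponent.
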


\begin{proof}
Let $\ol{f}_\ell = \varphi_\ell *f$ for any $\ell>0$.  Mollifying the equations, one finds
\begin{equation}
\partial_t \ol{(\theta^\kappa)}_\ell + \ol{u}_\ell \cdot \nabla \ol{(\theta^\kappa)}_\ell = \kappa \Delta \ol{(\theta^\kappa)}_\ell - \nabla \cdot \tau_\ell(u,\theta^\kappa)
\end{equation}
where $\tau_\ell(f,g)= \ol{(fg)}_\ell - \ol{f}_\ell \ol{g}_\ell$.  A straightforward calculation  for any $0\le t\le T$ shows that
\begin{align}\nonumber
 \kappa \int_{t}^T \int_{\mathbb{T}^d} |\nabla \theta^\kappa|^2 \rmd x \rmd t' &= \int_t^T \int_{\mathbb{T}^d} \nabla\ol{(\theta^\kappa)}_\ell\cdot \tau_\ell(u,\theta^\kappa) \rmd x \rmd t' +  \kappa \int_t^T\int_{\mathbb{T}^d} |\nabla \ol{(\theta^\kappa)}_\ell|^2 \rmd x \rmd t' \\
 &\quad + \frac{1}{2} \int_{\mathbb{T}^d}  \tau_\ell(\theta^\kappa(t),\theta^\kappa (t)) \rmd x-  \frac{1}{2} \int_{\mathbb{T}^d}  \tau_\ell(\theta^\kappa(T),\theta^\kappa (T )) \rmd x.
\end{align}
Using standard estimates for mollified gradients and the Constantin-E-Titi \cite{ConstantinEEA94} commutator estimate 
\begin{equation}
|\nabla \ol{f}_\ell|_{L^\infty}\le  |\theta |_{C^\alpha} \ell^{\alpha-1}, \qquad |\tau_\ell (f,g)|_{L^\infty} \le |\theta |_{C^\alpha} |g|_{C^\beta} \ell^{\alpha+\beta}, \qquad f\in C^\alpha, \ g\in C^\beta
\end{equation}
together with the fact that $\tau_\ell(f,f)\ge 0$ we arrive at an upper bound for the scalar dissipation
\begin{equation}
 \kappa \int_{t}^T \int_{\mathbb{T}^d} |\nabla \theta^\kappa|^2 \rmd x \rmd t' \le    \ell^{\alpha+2\beta-1} |\theta^\kappa|_{L^\infty_tC^\beta_x}^2  |u|_{L^1(t,T; C^\alpha_x)}  +  \left(\kappa (T-t) \ell^{2(\beta-1)} + \ell^{2\beta}\right) |\theta^\kappa|_{L^\infty_tC^\beta_x}^2.
\end{equation}
Setting $t=0$ and optimizing $\ell$ as a function of $\kappa$ we find $\ell= \kappa^{1/(\alpha+1)}$ and \eqref{noanombnd} follows.  The second statement of the theorem follows by dividing the time interval  into $[0, T-\epsilon] \times [T-\epsilon, T]$ and using the assumed $C^1$ regularity of $u$ on the interval $[0, T-\epsilon] $ together with the uniform H\"{o}lder on the entire interval $[0,T]$ and the fact that $\epsilon$ is allowed arbitrarily small (and can vanish as $\kappa\to 0$).
\end{proof}

In light of Theorems \ref{mainthm} and \ref{thm1}, we conclude with an open question.
\begin{question}
  Fix $\alpha\in (0,1)$.
  Does there exist divergence-free vector field
  \begin{equation*}
    u\in   L^1([0,1]; C^\alpha(\mathbb{T}^d))
  \end{equation*} such that $\{\theta^\kappa\}_{\kappa>0}$ is bounded in $L^\infty([0,T] ; C^\beta(\mathbb{T}^d))$ for every $\beta<(1-\alpha)/2$,and
\begin{equation*}
\liminf_{\kappa\to 0} \kappa \int_0^T \int_{\mathbb{T}^d} |\nabla \theta^\kappa|^2 \rmd x \rmd t>0 \ ?
\end{equation*}
\end{question}

Finally, we comment briefly on the nonlinear problem: establishing anomalous dissipation for solutions of Navier-Stokes equations
\begin{align*}
\partial_t u^\nu + u^\nu \cdot \nabla u^\nu &= -\nabla p^\nu + \nu \Delta u^\nu,\\
\nabla \cdot u^\nu &= 0.
\end{align*}
As for passive scalars, experimental and numerical observations of hydrodynamic turbulence suggest that kinetic energy dissipation is non-vanishing in the limit of zero viscosity \cite{Sreenivasan84,Sreenivasan98,PearsonKrogstadEA02,KanedaIshiharaEA03}, i.e.\ there exists $\ve>0$ independent of $\nu$ such that, in turbulent regimes, a family of Leray-Hopf solutions $\{u^\nu \}_{\nu>0}$ satisfies
\begin{equation}\label{anomalousdissNS}
\nu\int_0^T  \int  \!  |\nabla u^\nu(x,t)|^2 \rmd x \rmd t \geq \varepsilon >0.
\end{equation}
This phenomenon of anomalous dissipation is so fundamental to our modern understanding of turbulence that it is often termed the "zeroth law".  In 1949 \cite{Onsager49}, Lars Onsager offered significant insight 
into this phenomena in asserting that  it requires that, at high Reynolds number, flow develop  structures approximating singular ones with
H\"{o}lder exponents not exceeding $1/3$.  This assertion has since been proved \cite{Eyink94,ConstantinEEA94} and dissipative weak solutions of the Euler equations with lower regularity have been constructed in a series of works using  convex integration  \cite{DeLellisSzekelyhidi09,DeLellisSzekelyhidi10,DeLellisSzekelyhidi12,Isett18,BuckmasterDeLellisEA19} and culminating in a construction of non-conservative solutions in the class $C_t C^{1/3-}_x$ by P. Isett.
However, to this day, none of these constructions are achieved as zero viscosity limits of  Navier-Stokes solutions obeying a physical energy balance (e.g. Leray-Hopf weak solutions).  In the present paper, we solved an  analogous problem for passive scalars in a setting which models the effect of a finite-time singularity in an inviscid problem
on anomalous dissipation in the corresponding viscous problem.  Our result follows from a sufficient condition for anomalous dissipation assuming that the inviscid solution becomes singular in
a controlled way. It is possible that one could deduce anomalous dissipation in the vanishing viscosity limit of Navier Stokes solutions under some conditions on a (hypothetical) blowup in the Euler equation.


\begin{thebibliography}{CGSW15}
\expandafter\ifx\csname url\endcsname\relax
  \def\url#1{\texttt{#1}}\fi
\expandafter\ifx\csname doi\endcsname\relax
  \def\doi#1{\burlalt{doi:#1}{http://dx.doi.org/#1}}\fi
\expandafter\ifx\csname urlprefix\endcsname\relax\def\urlprefix{URL }\fi
\expandafter\ifx\csname href\endcsname\relax
  \def\href#1#2{#2}\fi
\expandafter\ifx\csname burlalt\endcsname\relax
  \def\burlalt#1#2{\href{#2}{#1}}\fi

\bibitem[ABC14]{AlbertiBianchiniEA14}
G.~Alberti, S.~Bianchini, and G.~Crippa.
\newblock A uniqueness result for the continuity equation in two dimensions.
\newblock {\em J. Eur. Math. Soc. (JEMS)}, 16(2):201--234, 2014.
\newblock \doi{10.4171/JEMS/431}.

\bibitem[ACM19a]{AlbertiCrippaEA19}
G.~Alberti, G.~Crippa, and A.~L. Mazzucato.
\newblock Exponential self-similar mixing by incompressible flows.
\newblock {\em J. Amer. Math. Soc.}, 32(2):445--490, 2019.
\newblock \doi{10.1090/jams/913}.

\bibitem[ACM19b]{AlbertiCrippaEA19a}
G.~Alberti, G.~Crippa, and A.~L. Mazzucato.
\newblock Loss of regularity for the continuity equation with non-{L}ipschitz
  velocity field.
\newblock {\em Ann. PDE}, 5(1):Art. 9, 19, 2019.
\newblock \doi{10.1007/s40818-019-0066-3}.

\bibitem[Aiz78]{Aizenman78}
M.~Aizenman.
\newblock On vector fields as generators of flows: a counterexample to
  {N}elson's conjecture.
\newblock {\em Ann. Math. (2)}, 107(2):287--296, 1978.
\newblock \doi{10.2307/1971145}.

\bibitem[AK98]{ArnoldKhesin98}
V.~I. Arnold and B.~A. Khesin.
\newblock {\em Topological methods in hydrodynamics}, volume 125 of {\em
  Applied Mathematical Sciences}.
\newblock Springer-Verlag, New York, 1998.

\bibitem[Amb04]{Ambrosio04}
L.~Ambrosio.
\newblock Transport equation and {C}auchy problem for {$BV$} vector fields.
\newblock {\em Invent. Math.}, 158(2):227--260, 2004.
\newblock \doi{10.1007/s00222-004-0367-2}.

\bibitem[BBPS19a]{BedrossianBlumenthalEA19a}
J.~Bedrossian, A.~Blumenthal, and S.~Punshon-Smith.
\newblock Almost-sure exponential mixing of passive scalars by the stochastic
  {N}avier-{S}tokes equations, 2019,
  \burlalt{1905.03869}{http://arxiv.org/abs/1905.03869}.


\bibitem[BBPS19b]{BedrossianBlumenthalEA19b}
J.~Bedrossian, A.~Blumenthal, and S.~Punshon-Smith.
\newblock The Batchelor spectrum of passive scalar turbulence in stochastic fluid mechanics, 2019,
  \burlalt{1911.11014}{https://arxiv.org/abs/1911.11014}.
  
  
\bibitem[BDSV19]{BuckmasterDeLellisEA19}
T.~Buckmaster, C.~De~Lellis, L.~Sz\'{e}kelyhidi, Jr., and V.~Vicol.
\newblock Onsager's conjecture for admissible weak solutions.
\newblock {\em Comm. Pure Appl. Math.}, 72(2):229--274, 2019.
\newblock \doi{10.1002/cpa.21781}.

\bibitem[BGK98]{BernardGawedzkiEA98}
D.~Bernard, K.~Gaw\c{e}dzki, and A.~Kupiainen.
\newblock Slow modes in passive advection.
\newblock {\em J. Statist. Phys.}, 90(3-4):519--569, 1998.
\newblock \doi{10.1023/A:1023212600779}.

\bibitem[Bre03]{Bressan03}
A.~Bressan.
\newblock A lemma and a conjecture on the cost of rearrangements.
\newblock {\em Rend. Sem. Mat. Univ. Padova}, 110:97--102, 2003.

\bibitem[BruNg20]{BN}
Bru\'{e}, Elia, and Quoc-Hung Nguyen.
\newblock  Advection diffusion equation with Sobolev vector field, in prep (2020).


\bibitem[CDL08a]{CrippaDeLellis08a}
G.~Crippa and C.~De~Lellis.
\newblock Regularity and compactness for the {D}i{P}erna-{L}ions flow.
\newblock In {\em Hyperbolic problems: theory, numerics, applications}, pages
  423--430. Springer, Berlin, 2008.
\newblock \doi{10.1007/978-3-540-75712-2_39}.

\bibitem[CDL08b]{CrippaDeLellis08}
G.~Crippa and C.~De~Lellis.
\newblock Estimates and regularity results for the {D}i{P}erna-{L}ions flow.
\newblock {\em J. Reine Angew. Math}, 616:15--46, 2008.

\bibitem[CET94]{ConstantinEEA94}
P.~Constantin, W.~E, and E.~S. Titi.
\newblock Onsager's conjecture on the energy conservation for solutions of
  {E}uler's equation.
\newblock {\em Comm. Math. Phys.}, 165(1):207--209, 1994.
\newblock \urlprefix\url{http://projecteuclid.org/euclid.cmp/1104271041}.

\bibitem[CGSW15]{CrippaGusevEA15}
G.~Crippa, N.~Gusev, S.~Spirito, and E.~Wiedemann.
\newblock Non-uniqueness and prescribed energy for the continuity equation.
\newblock {\em Commun. Math. Sci.}, 13(7):1937--1947, 2015.
\newblock \doi{10.4310/CMS.2015.v13.n7.a12}.

\bibitem[CKRZ08]{ConstantinKiselevEA08}
P.~Constantin, A.~Kiselev, L.~Ryzhik, and A.~Zlato{\v{s}}.
\newblock Diffusion and mixing in fluid flow.
\newblock {\em Ann. of Math. (2)}, 168(2):643--674, 2008.
\newblock \doi{10.4007/annals.2008.168.643}.

\bibitem[CLR03]{ColombiniLuoEA03}
F.~Colombini, T.~Luo, and J.~Rauch.
\newblock Uniqueness and nonuniqueness for nonsmooth divergence free transport.
\newblock In {\em Seminaire: \'{E}quations aux {D}\'{e}riv\'{e}es {P}artielles,
  2002--2003}, S\'{e}min. \'{E}qu. D\'{e}riv. Partielles, pages Exp. No. XXII,
  21. \'{E}cole Polytech., Palaiseau, 2003.

\bibitem[Cor51]{Corrsin51}
S.~Corrsin.
\newblock On the spectrum of isotropic temperature fluctuations in an isotropic
  turbulence.
\newblock {\em J. Appl. Phys.}, 22:469--473, 1951.

\bibitem[CP93]{ConstantinProcaccia93}
P.~Constantin and I.~Procaccia.
\newblock Scaling in fluid turbulence: a geometric theory.
\newblock {\em Phys. Rev. E (3)}, 47(5):3307--3315, 1993.
\newblock \doi{10.1103/PhysRevE.47.3307}.

\bibitem[CP94]{ConstantinProcaccia94}
P.~Constantin and I.~Procaccia.
\newblock The geometry of turbulent advection: sharp estimates for the
  dimensions of level sets.
\newblock {\em Nonlinearity}, 7(3):1045--1054, 1994.
\newblock \urlprefix\url{http://stacks.iop.org/0951-7715/7/1045}.

\bibitem[CZDE18]{CotiZelatiDelgadinoEA18}
M.~Coti~Zelati, M.~G. Delgadino, and T.~M. Elgindi.
\newblock On the relation between enhanced dissipation time-scales and mixing
  rates.
\newblock {\em ArXiv e-prints}, June 2018,
  \burlalt{1806.03258}{http://arxiv.org/abs/1806.03258}.

\bibitem[DE17]{DrivasEyink17}
T.~D. Drivas and G.~L. Eyink.
\newblock A {L}agrangian fluctuation-dissipation relation for scalar
  turbulence. {P}art {I}. {F}lows with no bounding walls.
\newblock {\em J. Fluid Mech.}, 829:153--189, 2017.
\newblock \doi{10.1017/jfm.2017.567}.

\bibitem[DE19]{DrivasEyink19}
T.~D. Drivas and G.~L. Eyink.
\newblock An {O}nsager singularity theorem for {L}eray solutions of
  incompressible {N}avier-{S}tokes.
\newblock {\em Nonlinearity}, 32(11):4465--4482, oct 2019.
\newblock \doi{10.1088/1361-6544/ab2f42}.

\bibitem[Dep03]{Depauw03}
N.~Depauw.
\newblock Non unicit\'{e} des solutions born\'{e}es pour un champ de vecteurs
  {BV} en dehors d'un hyperplan.
\newblock {\em C. R. Math. Acad. Sci. Paris}, 337(4):249--252, 2003.
\newblock \doi{10.1016/S1631-073X(03)00330-3}.

\bibitem[DL89]{DiPernaLions89}
R.~J. DiPerna and P.-L. Lions.
\newblock Ordinary differential equations, transport theory and {S}obolev
  spaces.
\newblock {\em Invent. Math.}, 98(3):511--547, 1989.
\newblock \doi{10.1007/BF01393835}.

\bibitem[DLS09]{DeLellisSzekelyhidi09}
C.~De~Lellis and L.~Sz\'{e}kelyhidi, Jr.
\newblock The {E}uler equations as a differential inclusion.
\newblock {\em Ann. of Math. (2)}, 170(3):1417--1436, 2009.
\newblock \doi{10.4007/annals.2009.170.1417}.

\bibitem[DLS10]{DeLellisSzekelyhidi10}
C.~De~Lellis and L.~Sz{\'e}kelyhidi, Jr.
\newblock On admissibility criteria for weak solutions of the {E}uler
  equations.
\newblock {\em Arch. Ration. Mech. Anal.}, 195(1):225--260, 2010.
\newblock \doi{10.1007/s00205-008-0201-x}.

\bibitem[DLS12]{DeLellisSzekelyhidi12}
C.~De~Lellis and L.~Sz\'{e}kelyhidi, Jr.
\newblock The {$h$}-principle and the equations of fluid dynamics.
\newblock {\em Bull. Amer. Math. Soc. (N.S.)}, 49(3):347--375, 2012.
\newblock \doi{10.1090/S0273-0979-2012-01376-9}.

\bibitem[DSY05]{DonzisSreenivasanEA05}
D.~A. Donzis, K.~R. Sreenivasan, and P.~K. Yeung.
\newblock Scalar dissipation rate and dissipative anomaly in isotropic
  turbulence.
\newblock {\em Journal of Fluid Mechanics}, 532:199–216, 2005.
\newblock \doi{10.1017/S0022112005004039}.

\bibitem[ED15]{EyinkDrivas15}
G.~L. Eyink and T.~D. Drivas.
\newblock Spontaneous stochasticity and anomalous dissipation for {B}urgers
  equation.
\newblock {\em J. Stat. Phys.}, 158(2):386--432, 2015.
\newblock \doi{10.1007/s10955-014-1135-3}.

\bibitem[Eyi94]{Eyink94}
G.~L. Eyink.
\newblock Energy dissipation without viscosity in ideal hydrodynamics. {I}.
  {F}ourier analysis and local energy transfer.
\newblock {\em Phys. D}, 78(3-4):222--240, 1994.
\newblock \doi{10.1016/0167-2789(94)90117-1}.

\bibitem[Eyi96]{Eyink96}
G.~L. Eyink.
\newblock Intermittency and anomalous scaling of passive scalars in any space
  dimension.
\newblock {\em Phys. Rev. E}, 54:1497--1503, Aug 1996.
\newblock \doi{10.1103/PhysRevE.54.1497}.

\bibitem[EZ18]{ElgindiZlatos18}
T.~M. {Elgindi} and A.~{Zlato{\v{s}}}.
\newblock Universal mixers in all dimensions.
\newblock {\em arXiv e-prints}, Sep 2018,
  \burlalt{1809.09614}{http://arxiv.org/abs/1809.09614}.

\bibitem[FGV01]{FalkovichGawedzkiEA01}
G.~Falkovich, K.~Gaw\c{e}dzki, and M.~Vergassola.
\newblock Particles and fields in fluid turbulence.
\newblock {\em Reviews of Modern Physics}, 73(4):913–975, Nov 2001.
\newblock \doi{10.1103/revmodphys.73.913}.

\bibitem[FI19]{FengIyer19}
Y.~Feng and G.~Iyer.
\newblock Dissipation enhancement by mixing.
\newblock {\em Nonlinearity}, 32(5):1810--1851, 2019.
\newblock \doi{10.1088/1361-6544/ab0e56}.

\bibitem[Gaw08]{Gawedzki08}
K.~Gaw\c{e}dzki.
\newblock Soluble models of turbulent transport.
\newblock In {\em Non-equilibrium statistical mechanics and turbulence}, volume
  355 of {\em London Math. Soc. Lecture Note Ser.}, pages 44--107. Cambridge
  Univ. Press, Cambridge, 2008.

\bibitem[IKX14]{IyerKiselevEA14}
G.~Iyer, A.~Kiselev, and X.~Xu.
\newblock Lower bounds on the mix norm of passive scalars advected by
  incompressible enstrophy-constrained flows.
\newblock {\em Nonlinearity}, 27(5):973--985, 2014.
\newblock \doi{10.1088/0951-7715/27/5/973}.

\bibitem[Ise18]{Isett18}
P.~Isett.
\newblock A proof of {O}nsager's conjecture.
\newblock {\em Ann. of Math. (2)}, 188(3):871--963, 2018.
\newblock \doi{10.4007/annals.2018.188.3.4}.

\bibitem[IXZ19]{IyerXuEA19}
G.~Iyer, X.~Xu, and A.~Zlato\v{s}.
\newblock Convection-induced singularity suppression in the {K}eller-{S}egel
  and other non-linear {PDEs}.
\newblock {\em arXiv e-prints}, Aug 2019,
  \burlalt{1908.01941}{http://arxiv.org/abs/1908.01941}.

\bibitem[KIY{\etalchar{+}}03]{KanedaIshiharaEA03}
Y.~Kaneda, T.~Ishihara, M.~Yokokawa, K.~Itakura, and A.~Uno.
\newblock Energy dissipation rate and energy spectrum in high resolution direct
  numerical simulations of turbulence in a periodic box.
\newblock {\em Physics of Fluids}, 15(2):L21--L24, 2003.
\newblock \doi{10.1063/1.1539855}.

\bibitem[Kol41]{Kolmogorov41}
A.~N. Kolmogorov.
\newblock Energy dissipation in locally isotropic turbulence.
\newblock {\em Dokl. Akad. Nauk. SSSR}, 32:19--21, 1941.

\bibitem[LJR02]{LeJanRaimond02}
Y.~Le~Jan and O.~Raimond.
\newblock Integration of {B}rownian vector fields.
\newblock {\em Ann. Probab.}, 30(2):826--873, 2002.
\newblock \doi{10.1214/aop/1023481009}.

\bibitem[LJR04]{LeJanRaimond04}
Y.~Le~Jan and O.~Raimond.
\newblock Flows, coalescence and noise.
\newblock {\em Ann. Probab.}, 32(2):1247--1315, 2004.
\newblock \doi{10.1214/009117904000000207}.

\bibitem[LLN{\etalchar{+}}12]{LunasinLinEA12}
E.~Lunasin, Z.~Lin, A.~Novikov, A.~Mazzucato, and C.~R. Doering.
\newblock Optimal mixing and optimal stirring for fixed energy, fixed power, or
  fixed palenstrophy flows.
\newblock {\em J. Math. Phys.}, 53(11), Nov. 2012.
\newblock \doi{10.1063/1.4752098}.

\bibitem[MD18]{MilesDoering18}
C.~J. Miles and C.~R. Doering.
\newblock Diffusion-limited mixing by incompressible flows.
\newblock {\em Nonlinearity}, 31(5):2346, 2018.
\newblock \doi{10.1088/1361-6544/aab1c8}.

\bibitem[MS18]{ModenaSzekelyhidi18}
S.~Modena and L.~Sz\'{e}kelyhidi, Jr.
\newblock Non-uniqueness for the transport equation with {S}obolev vector
  fields.
\newblock {\em Ann. PDE}, 4(2):Art. 18, 38, 2018.
\newblock \doi{10.1007/s40818-018-0056-x}.

\bibitem[Obu49]{Obukhov49}
A.~M. Obukhov.
\newblock Structure of temperature field in turbulent flow.
\newblock {\em Izv. Akad. Nauk. SSSR, Geogr. Geofiz}, 13, 1949.

\bibitem[Ons49]{Onsager49}
L.~Onsager.
\newblock Statistical hydrodynamics.
\newblock {\em Nuovo Cimento (9)}, 6(Supplemento, 2(Convegno Internazionale di
  Meccanica Statistica)):279--287, 1949.

\bibitem[Pie94]{Pierrehumbert94}
R.~Pierrehumbert.
\newblock Tracer microstructure in the large-eddy dominated regime.
\newblock {\em Chaos, Solitons \& Fractals}, 4(6):1091 -- 1110, 1994.
\newblock \doi{10.1016/0960-0779(94)90139-2}.
\newblock Special Issue: Chaos Applied to Fluid Mixing.

\bibitem[PKW02]{PearsonKrogstadEA02}
B.~R. Pearson, P.~A. Krogstad, and W.~van~de Water.
\newblock Measurements of the turbulent energy dissipation rate.
\newblock {\em Physics of Fluids}, 14(3):1288--1290, 2002.
\newblock \doi{10.1063/1.1445422}.

\bibitem[Poo96]{Poon96}
C.-C. Poon.
\newblock Unique continuation for parabolic equations.
\newblock {\em Comm. Partial Differential Equations}, 21(3-4):521--539, 1996.
\newblock \doi{10.1080/03605309608821195}.

\bibitem[Sre84]{Sreenivasan84}
K.~R. Sreenivasan.
\newblock On the scaling of the turbulence energy dissipation rate.
\newblock {\em The Physics of Fluids}, 27(5):1048--1051, 1984.
\newblock \doi{10.1063/1.864731}.

\bibitem[Sre98]{Sreenivasan98}
K.~R. Sreenivasan.
\newblock An update on the energy dissipation rate in isotropic turbulence.
\newblock {\em Phys. Fluids}, 10(2):528--529, 1998.
\newblock \doi{10.1063/1.869575}.

\bibitem[Sre19]{Sreenivasan19}
K.~R. Sreenivasan.
\newblock Turbulent mixing: A perspective.
\newblock {\em Proceedings of the National Academy of Sciences},
  116(37):18175--18183, 2019.
\newblock \doi{10.1073/pnas.1800463115}.

\bibitem[SS00]{ShraimanSiggia00}
B.~I. Shraiman and E.~D. Siggia.
\newblock Scalar turbulence.
\newblock {\em Nature}, 405(6787):639, 2000.
\newblock \doi{10.1038/35015000}.

\bibitem[Thi12]{Thiffeault12}
J.-L. Thiffeault.
\newblock Using multiscale norms to quantify mixing and transport.
\newblock {\em Nonlinearity}, 25(2):R1--R44, 2012.
\newblock \doi{10.1088/0951-7715/25/2/R1}.

\bibitem[Wei18]{Wei18}
D.~Wei.
\newblock Diffusion and mixing in fluid flow via the resolvent estimate.
\newblock {\em arXiv e-prints}, Nov 2018,
  \burlalt{1811.11904}{http://arxiv.org/abs/1811.11904}.

\bibitem[YZ17]{YaoZlatos17}
Y.~Yao and A.~Zlato\v{s}.
\newblock Mixing and un-mixing by incompressible flows.
\newblock {\em J. Eur. Math. Soc. (JEMS)}, 19(7):1911--1948, 2017.
\newblock \doi{10.4171/JEMS/709}.


\end{thebibliography}
\newcommand{\etalchar}[1]{$^{#1}$}

\end{document}